\documentclass[10pt,a4paper]{article}

\usepackage[utf8]{inputenc}
\usepackage[T1]{fontenc}
\usepackage{lmodern}

\usepackage{graphicx}
\makeatletter
\def\maxwidth{\ifdim\Gin@nat@width>\linewidth\linewidth\else\Gin@nat@width\fi}
\makeatother
\setkeys{Gin}{width=\maxwidth,keepaspectratio}

\usepackage[margin=2.5cm]{geometry}
\usepackage{setspace}
\usepackage{enumitem}
\setlist{itemsep=0.25\baselineskip, parsep=0pt, topsep=0.25\baselineskip, partopsep=0pt, leftmargin=*}

\usepackage{tocloft}
\usepackage{amsmath,amssymb,mathtools,amsthm}
\usepackage{longtable}

\swapnumbers

\theoremstyle{plain}
\newtheorem{theorem}{Theorem}
\newtheorem{lemma}[theorem]{Lemma}
\newtheorem{proposition}[theorem]{Proposition}
\newtheorem{corollary}[theorem]{Corollary}

\theoremstyle{definition}
\newtheorem{definition}[theorem]{Definition}
\newtheorem{example}[theorem]{Example}
\newtheorem{remark}[theorem]{Remark}

\usepackage[numbers,sort&compress]{natbib}

\usepackage{url}
\usepackage{hyperref}
\hypersetup{hidelinks}
\usepackage{authblk}
\usepackage{orcidlink}

\newcommand{\IR}{\mathbb{R}} % Real numbers #listed
\newcommand{\IN}{\mathbb{N}} % Natural numbers #listed
\newcommand{\IZ}{\mathbb{Z}} % Integers #listed
\newcommand{\IQ}{\mathbb{Q}} % Rational numbers #listed
\newcommand{\ik}{\Bbbk} % Ground field #listed
\newcommand{\set}[1]{\{#1\}} % Simple set braces #listed
\newcommand{\KP}{\mathcal{P}}

\newcommand{\eps}{\varepsilon} % Epsilon variant #listed
\newcommand{\tightlist}{\setlength{\itemsep}{0pt}\setlength{\parskip}{0pt}}
\renewcommand{\part}{\vdash} % Turnstile #listed (overrides LaTeX \part sectioning)
\newcommand{\KM}{\mathcal{M}} % Multiset / multi-index set #listed
\newcommand{\Part}{\mathrm{Part}} % Partitions of a set #listed
\newcommand{\Cov}{\mathrm{Cov}} % Coverings #listed
\DeclareMathOperator{\lf}{\mathrm{leaf}} % Leaf support #listed
\DeclareMathOperator{\supp}{\mathrm{Supp}} % Support #listed
\title{Discrete Faà di Bruno via Möbius Inversion}
\author{Heinrich Hartmann\footnote{Hartmann IT GmbH / \href{https://heinrichhartmann.com/math}{heinrichhartmann.com/math} / \texttt{Heinrich@HeinrichHartmann.com}}\;\,\orcidlink{0000-0002-3929-2421}}
\date{}

\begin{document}

\maketitle
\vspace{-3em}\begin{center}\href{https://heinrichhartmann.com/math/2026-Moebius-Faa-di-Bruno}{heinrichhartmann.com/math/2026-Moebius-Faa-di-Bruno}\end{center}\vspace{0.5em}

\begin{abstract}
We approach discrete and differential Faà di Bruno formulas from a Möbius inversion angle. 
On the Boolean cube, Newton's discrete Taylor formula and the definition of iterated forward differences form a zeta--Möbius dual pair, 
and composing two Taylor expansions and inverting once yields a closed discrete 
Faà di Bruno formula at a fixed basepoint: for arbitrary maps $f, g$ between abelian groups,

$$
\Delta(f \circ g;\,x;\,u_1,\dots,u_k) = \sum_{H \in \mathrm{Cov}(k)} \Delta(f;\,g(x);\,(\Delta(g;x;u_T))_{T\in H}),
$$

where $\mathrm{Cov}(k)$ denotes the coverings of $[k]$ by nonempty subsets.
Grouping repeated directions gives binomial versions on multi-index grids, and iterating gives formulas for $m$-fold composites, 
with integer covering coefficients governed by explicit cross and level recursions, a discrete analogue of the Constantine--Savits formulas.

The relationship between coverings and partitions appearing in classical Faà di Bruno formulas, is exhibited in an algebraic setting. 
The discrete formulas are Taylor expansions over the function algebra of the Boolean cube, 
$B_k = \mathbf{k}[\delta_1,\dots,\delta_k]/(\delta_i^2-\delta_i)$, whose idempotent generators absorb overlapping products; 
in the differential analogue $A_k = \mathbf{k}[\varepsilon_1,\dots,\varepsilon_k]/(\varepsilon_i^2)$, nilpotent generators annihilate overlaps and only partitions remain. 
Both algebras are fibers of the flat deformation $C_k = \mathbf{k}[t][x_1,\dots,x_k]/(x_i^2 - tx_i)$, over which a single weighted covering formula interpolates: 
its coefficients are difference quotients, non-partition coverings carry positive powers of $t$, and evaluation at $t = 0$ yields 
the classical partition-indexed Faà di Bruno formula.

We demonstrate how these algebraic identities can be lifted to the analytical setting of $C^n$ maps between Banach spaces,
recovering the multivariate Faà di Bruno formula of Constantine--Savits and extending it to composites of several maps. 
Boolean finite differences, binomial grid formulas, infinitesimal Taylor algebras, and Fréchet derivatives thus appear 
as four realizations of one Möbius-dual Faà di Bruno formula, connected by a flat family.
\end{abstract}

\tableofcontents

\section{Introduction}\label{introduction}

We approach discrete and differential calculus from the point of view of
Möbius inversion. The motivating observation is elementary: the discrete
Taylor formula is the Möbius inverse of the definition of iterated
forward differences. If \(g: X \to Y\) is a map between abelian groups,
then the values \(g(x + \sum_{i \in S} u_i)\) for \(S \subseteq [k]\)
form a Boolean cube, and the forward difference

\[
  \Delta(g; x; u_1, \dots, u_k) = \sum_{S \subseteq [k]} (-1)^{k-|S|}\, g(x + {\textstyle\sum_{i \in S}} u_i)
\]

is exactly the Möbius transform of this cube. Newton's finite Taylor
formula is the inverse zeta expansion.

\begin{remark}[Notation]

We prefer explicit arguments over subscript notation, writing
\(\Delta(g; x; u)\) instead of \(\Delta_u g(x)\). Semicolons and commas
are both argument separators; we use semicolons as a visual hint when
arguments are of different kinds. Dropping trailing arguments denotes
currying: \(\Delta(g; x)\) is the map \(u \mapsto \Delta(g; x; u)\).

\end{remark}

This observation extends without changing its nature from Boolean cubes
to multi-index grids. Repeating directions replaces subsets by slot sets
and gives a binomial form of Möbius inversion. Thus the discrete Taylor
formula and its higher multi-index versions are two instances of the
same zeta/Möbius duality.

The same principle carries over to the differential setting once one
works over an infinitesimal Taylor algebra. The discrete calculus is
algebra over the Boolean cube algebra
\(B_k = \ik[\delta_1, \dots, \delta_k]/(\delta_i^2 - \delta_i)\), whose
spectrum is the cube \(\set{0,1}^k\), or over the grid algebra
\(B_k^\nu = \ik[\delta_1, \dots, \delta_k]/((\delta_i)_{\nu_i+1})\) with
falling-factorial relations, whose natural module basis is the Newton
basis \((\delta_i)_j\). The infinitesimal analogues replace idempotent
relations by nilpotent ones: the Taylor algebra
\(A_k = \ik[\eps_1, \dots, \eps_k]/(\eps_i^2)\) and its truncation
\(A_k^\nu = \ik[\eps_1, \dots, \eps_k]/(\eps_i^{\nu_i+1})\). We will see
below that these are naturally expressed as two fibers of the same flat
deformation.

Working in \(A_k^\nu\), the Möbius duality transfers exactly from finite
grids to infinitesimal grids. Evaluating a polynomial map on the
infinitesimal grid \(x + \sum_i \beta_i \eps_i v_i\) for
\(0 \leq \beta \leq \nu\) gives a Taylor expansion on the zeta side, and
the same alternating sieve extracts the differential monomials: in
\(A_k^\gamma\),

\[
  D^\gamma(p; x; v_\bullet)\, \eps^\gamma = \sum_{\beta \leq \gamma} (-1)^{|\gamma - \beta|} \binom{\gamma}{\beta}\, p(x + {\textstyle\sum_i} \beta_i \eps_i v_i).
\]

In this sense, the finite-difference and differential Taylor formulas
are parallel Möbius-dual constructions: finite grids give forward
differences, infinitesimal grids give differentials.

This low-tech viewpoint becomes especially useful for composition. Our
first result is a discrete Faà di Bruno formula indexed by coverings,
closed at the original basepoint.

\begin{theorem}[Discrete Faà di Bruno]

Let \(X, Y, Z\) be abelian groups, \(g: X \to Y\), \(f: Y \to Z\)
arbitrary maps, \(x \in X\), and \(u_1, \dots, u_k \in X\). Then

\[
  \Delta(f \circ g; x; u_1, \dots, u_k) = \sum_{H \in \Cov(k)} \Delta(f; g(x); (\Delta(g; x; u_T))_{T \in H}),
\]

where \(\Cov(k)\) denotes the set of coverings of \([k]\) by nonempty
subsets.

\end{theorem}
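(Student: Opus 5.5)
Following the strategy announced in the introduction, I will compose two Newton (zeta) expansions and then apply Möbius inversion once. Write \(\Delta_S := \Delta(f\circ g; x; (u_i)_{i\in S})\) for \(S \subseteq [k]\), with \(\Delta_\emptyset = f(g(x))\). The definition of the forward difference is the Möbius transform of the cube values \(F(S) := f(g(x + \sum_{i\in S} u_i))\). So it suffices to show that the proposed right-hand side \(R(S) := \sum_{H \in \Cov(S)} \Delta(f; g(x); (\Delta(g;x;u_T))_{T\in H})\) satisfies the zeta relation
\[
  F(S) = \sum_{S' \subseteq S} R(S'),
\]
with the convention \(\Cov(\emptyset) = \set{\emptyset}\), so that \(R(\emptyset) = f(g(x))\). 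Möbius inversion on the Boolean lattice then gives \(R([k]) = \Delta_{[k]}\), which is the claim.

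For the zeta relation I will use Newton's Taylor formula twice. First, for \(g\), take \(a_T := \Delta(g;x;u_T)\) for nonempty \(T\). Newton's formula gives
\[
  g\bigl(x + \textstyle\sum_{i\in S} u_i\bigr) = g(x) + \sum_{\emptyset \neq T \subseteq S} a_T .
\]
Second, for \(f\) at the basepoint \(y = g(x)\), expand \(f(y + \sum_{T\in\mathcal{T}} a_T)\), where \(\mathcal{T}\) is the family of nonempty subsets of \(S\). This is a sum over the finitely many increments \(a_T\) indexed by \(\mathcal{T}\), so Newton's formula applies again:
\[
  f\bigl(y + \textstyle\sum_{T\in\mathcal{T}} a_T\bigr) = \sum_{H \subseteq \mathcal{T}} \Delta(f; y; (a_T)_{T\in H}).
\]
Here \(H\) ranges over all families of distinct nonempty subsets of \(S\).

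The remaining step is combinatorial regrouping. Each family \(H\) of nonempty subsets of \(S\) is a covering of exactly one subset, namely its union \(S' = \bigcup H \subseteq S\). Conversely, each covering of \(S' \subseteq S\) is such a family. Grouping the terms by \(S'\) therefore gives exactly \(\sum_{S'\subseteq S} R(S')\), with the empty family supplying \(R(\emptyset) = f(y)\). This establishes the zeta relation for every \(S\), and the inversion finishes the proof.

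The main point needing care is that the second application of Newton's formula uses the set \(\mathcal{T}\) of nonempty subsets as its index set, rather than \([k]\). One must therefore check that Newton's formula holds for an arbitrary finite family of increments in an abelian group with no further structure. This is immediate, since the formula is just Möbius inversion on the Boolean lattice of that index set. One must also confirm that coverings are understood as sets of distinct blocks, so that the bijection \(H \leftrightarrow (\bigcup H, H)\) is exact, and that \(\Delta(f;y;\cdot)\) with an empty list of directions equals \(f(y)\).
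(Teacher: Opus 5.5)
Your proposal is correct and follows the paper's proof essentially verbatim: Boolean Taylor duality for \(g\), then for \(f\) at \(y=g(x)\) in the directions \((\Delta(g;x;u_T))_{T\in\KP_+(S)}\), regrouping each family \(H\) by its union \(\bigcup H\), and a single Boolean Möbius inversion. The only difference is bookkeeping of the constant term. You absorb \(f(y)\) through the empty covering \(\Cov(\emptyset)=\set{\emptyset}\), whereas the paper takes \(\varphi(\emptyset)=0\) and compares \(\zeta(\varphi;S)\) with \(T(f\circ g;x;u_S)-z\).
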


The formula is exact and integral, closed at the original basepoint
\(g(x)\), and uses only the original direction increments
\(\Delta(g; x; u_T)\). It follows by composing two discrete Taylor
expansions and applying Boolean Möbius inversion: the zeta side of the
composite is trivially the composite of the zeta sides, and the covering
formula is its Möbius inverse.

For \(k = 2\) there are five coverings of \(\set{1,2}\). Writing
\(y = g(x)\) and \(g_T = \Delta(g; x; u_T)\),

\[
  \Delta(f \circ g; x; u_1, u_2)
  = \underbrace{\Delta(f; y; g_1, g_2) + \Delta(f; y; g_{12})}_{\text{partitions, weight } 2}
  + \underbrace{\Delta(f; y; g_1, g_{12}) + \Delta(f; y; g_2, g_{12})}_{\text{weight } 3}
  + \underbrace{\Delta(f; y; g_1, g_2, g_{12})}_{\text{weight } 4},
\]

where the \emph{weight} of a covering is the total size
\(\mathrm{wt}(H) = \sum_{T \in H} |T|\) of its blocks. Under the scaling
\(u_i = t v_i\), a term of weight \(w\) is of order \(t^w\): dividing by
\(t^2\) and letting \(t \to 0\), the three overlapping coverings vanish
and the two partition terms become the classical second-order chain rule
\(Df \cdot D^2g(v_1, v_2) + D^2f(Dg\, v_1, Dg\, v_2)\). In general the
covering counts \(1, 5, 109, 32297, \dots\)
\href{https://oeis.org/A003465}{A003465} replace the Bell numbers
\(1, 2, 5, 15, \dots\) \href{https://oeis.org/A000110}{A000110} of the
smooth theory.

Traditionally, this identity is grouped differently. In the example
above, every covering except the partition \(\set{\set{1},\set{2}}\)
contains the block \(\set{1,2}\); the corresponding four terms
constitute the finite Taylor expansion of a single difference of \(f\)
at the shifted basepoint \(y + g_1 + g_2\), and are commonly contracted
into it:

\[
  \Delta(f \circ g;\, x;\, u_1, u_2)
  = \underbrace{\Delta(f;\, y;\, g_1, g_2)}_{\set{\set{1},\set{2}}}
  + \underbrace{\Delta(f;\, y + g_1 + g_2;\, g_{12})}_{\set{\set{1,2}}},
\]

two terms instead of five, indexed by the partitions of \(\set{1,2}\).
Duarte and Torres \citep{DT2008} show that such a contraction is always
possible: the finite-difference chain rule can be written as a
partition-indexed sum with recursively shifted basepoints and corrected
directions, and this is the direction in which the theory has developed.
The contracted formulas are compact and of high practical utility, but
they are not closed forms, and the basepoint shifts break the symmetry
in the direction arguments, which makes higher-composition questions for
\(f_m \circ \cdots \circ f_1\) very difficult to approach. We argue that
the expanded fixed-basepoint covering form is the natural way to view
the formula: it is closed and symmetric in the directions, it carries
the weight grading that governs the collapse to the classical chain
rule, and it generalizes directly.

The same construction iterates through composites of several maps and
refines to multi-index grids, and both coefficient systems of the
resulting duality are explicit.

\begin{theorem}[Iterated Faà di Bruno duality]

Let
\(X_0 \xrightarrow{f_1} X_1 \xrightarrow{f_2} \cdots \xrightarrow{f_m} X_m\)
be arbitrary maps between abelian groups, \(x \in X_0\), directions
\(u_s \in X_0\) for \(s \in S\), \(\gamma \in \IN_0^S\), and
\(z = (f_m \circ \cdots \circ f_1)(x)\). Then

\[
  \Delta(f_m \circ \cdots \circ f_1;\, x;\, u_\bullet) = \sum_{K \in \Cov_m(S)} \Delta^{K}(f_1, \dots, f_m;\, x;\, u_\bullet),
\]

\[
  \Delta(f_m \circ \cdots \circ f_1; x; u_\bullet^\gamma) = \sum_{\kappa \in \KM_+^m(S)} \Cov_m(\gamma, \kappa)\, \Delta^{\kappa}(f_1, \dots, f_m; x; u_\bullet),
\]

\[
  (f_m \circ \cdots \circ f_1)(x + {\textstyle\sum_s} \gamma_s u_s) = z + \sum_{\kappa \in \KM_+^m(S)} \mathrm{Pow}_m(\gamma, \kappa)\, \Delta^{\kappa}(f_1, \dots, f_m; x; u_\bullet),
\]

where \(\Cov_m(S)\) is the set of \(m\)-fold iterated coverings of
\(S\), the binomial sums range over the \(m\)-fold iterated multi-index
sets \(\KM_+^m(S) = \KM_+(\KM_+^{m-1}(S))\), the iterated increments
\(\Delta^{K}\) and \(\Delta^{\kappa}\) are formed by nesting inner
differences as directions of outer differences, and
\(\mathrm{Pow}_m(\gamma, \kappa)\) and \(\Cov_m(\gamma, \kappa)\) count
\(m\)-fold iterated subsets, respectively coverings, of the slot set
\(S(\gamma)\) with profile \(\kappa\). The two coefficient systems are
Möbius inverses of each other in \(\gamma\),

\[
  \Cov_m(\gamma, \kappa) = \sum_{\beta \leq \gamma} (-1)^{\mathrm{wt}(\gamma - \beta)} \binom{\gamma}{\beta}\, \mathrm{Pow}_m(\beta, \kappa),
\]

and both satisfy explicit level recursions in \(m\).

\end{theorem}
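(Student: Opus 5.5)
We prove the four assertions in the order stated, by induction on $m$, using the Boolean zeta/Möbius duality and the Discrete Faà di Bruno theorem as the base case $m=2$.

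\textbf{Set-level covering formula.} For $m=1$ we set $\Cov_1(S)=\{S\}$ and $\Delta^{S}(f_1;x;u_\bullet)=\Delta(f_1;x;u_S)$, so the formula is a tautology. For the inductive step we write $F=f_{m-1}\circ\cdots\circ f_1$ and apply the Discrete Faà di Bruno theorem to $f_m\circ F$. This gives
\[
\Delta(f_m\circ F;x;u_\bullet)=\sum_{H\in\Cov(S)}\Delta\bigl(f_m;F(x);(\Delta(F;x;u_T))_{T\in H}\bigr).
\]
Next we expand each inner $\Delta(F;x;u_T)$ by the induction hypothesis on the subset $T$. The outer $\Delta(f_m;F(x);\cdot)$ is not multilinear in its directions, so we cannot distribute it over these sums. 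Instead we re-expand the outer difference itself by its Taylor/Möbius duality. For this reason we define iterated coverings recursively: $\Cov_m(S)=\Cov(\Cov_{m-1}(S))$, i.e.\ coverings of the set of level-$(m-1)$ covering data on subsets of $S$. The increment $\Delta^K$ then takes all level-$(m-1)$ increments $\Delta^{K'}$, $K'\in H$, as directions of $\Delta(f_m;z';\cdot)$.

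The cleanest route is the zeta side. The composite of the cube maps satisfies
\[
(f_m\circ F)(x+u_R)=f_m\Bigl(F(x)+\sum_{K'\subseteq\text{level }m-1,\ \text{supported in }R}\Delta^{K'}\Bigr),
\]
and by the Newton formula for $f_m$ this equals $z+\sum$ over subsets of the supported $K'$'s of $\Delta^{\cdot}$. We then apply Boolean Möbius inversion in $R$. A family of inner indices contributes to $\Delta(\,\cdot\,;u_S)$ exactly when the union of the supports of the family is $S$. This is precisely the covering condition at the top level, and it yields the first formula.

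\textbf{Multi-index formulas.} We pass to the slot set $S(\gamma)$, replacing each direction $u_s$ by $\gamma_s$ copies. The difference $\Delta(\cdot;u^\gamma_\bullet)$ is the set-level difference over $S(\gamma)$ with repeated directions. Iterated increments built from repeated directions depend only on the multiplicity profile $\kappa\in\KM_+^m(S)$, so we group the terms of the first formula by profile. This defines $\Cov_m(\gamma,\kappa)$ as the number of iterated coverings of $S(\gamma)$ with profile $\kappa$, and gives the second formula.

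The third formula comes from the zeta side in the same way. By the discrete Taylor formula, $(f_m\circ\cdots)(x+\sum\gamma_su_s)$ is the sum over all subsets $R\subseteq S(\gamma)$ of the cube differences, i.e.\ over iterated subsets. Grouping these by profile defines $\mathrm{Pow}_m$. Equivalently, Newton's formula expresses $F(x+\sum\gamma u)$ as a sum of $\binom{\gamma}{\beta}\Delta(\cdot;u^\beta)$; substituting the second formula gives $\mathrm{Pow}_m(\gamma,\kappa)=\sum_\beta\binom{\gamma}{\beta}\Cov_m(\beta,\kappa)$. Binomial Möbius inversion on the grid, the multi-index inversion stated in the introduction, then gives the displayed formula for $\Cov_m$.

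\textbf{Level recursions.} For $\mathrm{Pow}_m$, a level-$m$ iterated subset is a subset of the level-$(m-1)$ iterated objects. Counting the ways to choose, for each inner profile $\kappa'$ appearing with multiplicity $\kappa(\kappa')$, that many objects out of $\mathrm{Pow}_{m-1}(\gamma,\kappa')$ gives
\[
\mathrm{Pow}_m(\gamma,\kappa)=\prod_{\kappa'}\binom{\mathrm{Pow}_{m-1}(\gamma,\kappa')}{\kappa(\kappa')}.
\]
The recursion for $\Cov_m$ follows by applying the Möbius inversion to this identity.

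\textbf{Main obstacle.} The delicate part is the bookkeeping in the inductive step of the first formula. We must check that the outer Taylor expansion over the inner increments, which are indexed at all subsets $R\subseteq S$ simultaneously, really produces each iterated covering exactly once with the correct iterated increment. In other words, we need the inner data to form a single, $R$-independent index set. To secure this, we will first prove a zeta-side lemma: $(f_m\circ\cdots\circ f_1)(x+u_R)=z+\sum_{K\in\mathrm{Pow}_m(R)}\Delta^K$. This follows cleanly by induction, and we then obtain the covering formula from it by one Boolean Möbius inversion rather than by iterating the $m=2$ theorem.
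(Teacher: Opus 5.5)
Your final route is the paper's proof. You prove the Taylor-side identity $(f_m\circ\cdots\circ f_1)(x+\sum_{s\in R}u_s)=z+\sum_{K\in\KP_+^m(R)}\Delta^K$ by induction on $m$: the induction hypothesis for $f_{m-1}\circ\cdots\circ f_1$, then Taylor duality for $f_m$ at $x_{m-1}$ in the directions $\Delta^{K'}$. You get the covering formula by a single Boolean Möbius inversion, since each $H\in\KP_+^m(S)$ survives with sign sum $(1-1)^{|S\setminus\lf(H)|}$. You obtain the binomial forms by passing to $S(\gamma)$ and grouping by profile. Your recursion $\prod_\lambda\binom{\mathrm{Pow}_{m-1}(\gamma,\lambda)}{\kappa(\lambda)}$ is the paper's $\frac{1}{\kappa!}\prod_\lambda(\mathrm{Pow}_{m-1}(\gamma,\lambda))_{\kappa(\lambda)}$. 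The $R$-independence you flag as the main obstacle is automatic: $\Delta^K$ depends only on $K$ and $u$, and $\KP_+^m(R)\subseteq\KP_+^m(S)$.

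Two points need repair.

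First, $\Cov_m(S)=\Cov(\Cov_{m-1}(S))$ is wrong as written. Read literally, for $m=2$ it gives only the single covering $\{\{S\}\}$ of the one-point set $\Cov_1(S)=\{S\}$. The set you actually use, as your gloss indicates and as the paper defines it, is $\{H\in\KP_+(\KP_+^{m-1}(S)) : \bigcup_{K\in H}\lf(K)=S\}$.

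Second, the relation $\mathrm{Pow}_m(\gamma,\kappa)=\sum_{\beta\le\gamma}\binom{\gamma}{\beta}\Cov_m(\beta,\kappa)$ does not follow from substituting the second formula into the binomial Newton formula. That substitution only equates two sums $\sum_\kappa c_\kappa\,\Delta^\kappa$. The group elements $\Delta^\kappa$ are far from independent (they all vanish when $f_m$ is constant), so you cannot compare coefficients. Prove the relation by counting, as the paper does:
\begin{itemize}
\item decompose $\KP_+^m(S(\gamma))=\bigsqcup_{R\subseteq S(\gamma)}\Cov_m(R)$ by leaf support;
\item there are $\binom{\gamma}{\beta}$ subsets $R$ with $\nu(R)=\beta$;
\item each such $R$ carries exactly $\Cov_m(\beta,\kappa)$ coverings of profile $\kappa$, because $R\cong S(\beta)$ compatibly with the projection to $S$.
\end{itemize}
Binomial Möbius inversion then gives the displayed formula for $\Cov_m$. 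Combining it with your $\mathrm{Pow}_m$ recursion gives the level recursion for $\Cov_m$.
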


The coefficients are nonnegative integers, and no factorial denominators
appear anywhere in the discrete theory: all identities hold over
arbitrary abelian groups, in any characteristic.

On the differential side, the same Taylor-composition argument produces
the classical partition-indexed Faà di Bruno formulas. The reason
coverings become partitions is visible already in the multiplication
laws of the cube algebras. Idempotent generators absorb overlaps,
nilpotent generators annihilate them:

\[
  \delta^T \delta^U = \delta^{T \cup U} \quad \text{in } B_k,
  \qquad
  \eps^T \eps^U = \eps^{T \cup U} \cdot [T \cap U = \emptyset]
  \quad \text{in } A_k.
\]

A product of faces contributes to the top face exactly when the
exponents cover \([k]\); in \(A_k\) only the disjoint coverings
contribute. Covering logic becomes partition logic.

The two multiplication laws are fibers of one family. Over \(\ik[t]\),
the deformation cube algebra

\[
  C_k = \ik[t][x_1, \dots, x_k]/(x_i^2 - t x_i), \qquad x^S x^T = t^{|S \cap T|}\, x^{S \cup T},
\]

is free as a \(\ik[t]\)-module; its fiber at \(t = 0\) is the Taylor
algebra \(A_k\), and after inverting \(t\) the rescaling
\(x_i = t \delta_i\) identifies it with the Boolean cube algebra.
Expanding \(p(x + \sum_i v_i x_i)\) in the basis \(x^S\) produces
coefficients \(c_S(t) \in \ik[t]\) that interpolate the two calculi:

\[
  t^{|S|}\, c_S(t) = \Delta(p; x; (t v_i)_{i \in S}), \qquad c_S(0) = D(p; x; v_S).
\]

The two readings of \(c_S(t)\) explain each other. Since \(C_k\) is free
over \(\ik[t]\) with basis \(x^S\), the coefficients \(c_S(t)\) are
polynomials in \(t\) from the outset. Evaluating the expansion at the
vertices of the rescaled cube \(x_i = t \delta_i\),
\(\delta \in \set{0,1}^k\), and applying Boolean Möbius inversion
identifies \(t^{|S|} c_S(t)\) with the forward difference
\(\Delta(p; x; (t v_i)_{i \in S})\). Combining the two statements: the
forward difference is divisible by \(t^{|S|}\) in \(\ik[t]\), the
difference quotient \(c_S(t)\) is itself a polynomial in \(t\), and its
value at \(t = 0\), computed in the fiber \(A_k\), is the derivative.
The passage from difference quotients to derivatives is thus a
divisibility statement about polynomials in \(t\), rather than an
analytic limit. Composing two such expansions yields a single Faà di
Bruno identity over \(\ik[t]\), indexed by coverings and weighted by
overlap:

\[
  [x^{[k]}]\, (f \circ g)(x + {\textstyle\sum_i} v_i x_i) = \sum_{H \in \Cov(k)} t^{\,\mathrm{wt}(H) - k}\; \mathrm{FdB}_H(t),
\]

where \(\mathrm{FdB}_H(t)\) is the covering term formed from difference
quotients. At \(t = 1\) this is the discrete covering formula; at
\(t = 0\) every overlapping covering vanishes and the classical
partition formula remains, as in the \(k = 2\) example above. The
covering and partition Faà di Bruno formulas are one identity, evaluated
at two fibers. The grid algebras deform in the same way, via
\(C_k^\nu = \ik[t][x_1, \dots, x_k]/(\prod_{j=0}^{\nu_i}(x_i - jt) : 1 \leq i \leq k)\)
with its Newton basis: in this family, Newton interpolation degenerates
literally to Taylor expansion.

For polynomial maps \(q: \ik^e \to \ik^d\), \(p: \ik^d \to \ik\), the
special fiber gives the classical partition form \citep{Levy2006}

\[
  D(p \circ q; x; v_{[k]}) = \sum_{\pi \in \Part([k])} D(p; q(x); (D(q; x; v_B))_{B \in \pi}).
\]

Grouping repeated directions gives the Constantine--Savits multi-index
formula \citep{CS1996}. Iterating the construction gives an iterated
Constantine--Savits formula for \(m\)-fold composites with recursive
partition coefficients \(\Part_m(\gamma, \kappa)\).

These polynomial identities lift to the Banach setting. For \(C^n\) maps
between Banach spaces, the Taylor jet of a composite agrees up to order
\(n\) with the composite of the individual Taylor jets:

\[
  T^n(f_m \circ \cdots \circ f_1;\, x) = \pi_{\leq n}(T^n(f_m;\, x_{m-1}) \circ \cdots \circ T^n(f_1;\, x_0)).
\]

Since the partition Faà di Bruno formulas are polynomial coefficient
identities, they apply directly to these Taylor jets and therefore to
any \(C^n\) composite. In the polynomial setting, the deformation is
precisely the passage to the limit of difference quotients: scaling the
increments by \(t\), the non-partition terms of the discrete covering
formula carry positive powers of \(t\) and vanish as \(t \to 0\), while
the partition terms specialize to their smooth counterparts
\ref{thm:fdb-deformation}.

Putting things together, we obtain the following generalization of
Constantine--Savits to Banach spaces and iterated multi-indices.

\begin{theorem}[Fréchet iterated Faà di Bruno / recursive Constantine--Savits]

Let \(X_0, \dots, X_m\) be Banach spaces, let \(f_r: X_{r-1} \to X_r\)
be \(C^n\) near \(x_{r-1}\), and put \(x_0 = x\),
\(x_r = f_r(x_{r-1})\). Fix directions \(v_1, \dots, v_k \in X_0\) and
let \(\gamma \in \IN_0^k\) with \(1 \leq |\gamma| \leq n\). Then

\[
  D^\gamma(f_m \circ \cdots \circ f_1;\, x;\, v_\bullet) = \sum_{\kappa \in \KM_+^m(k)} \Part_m(\gamma, \kappa)\, D^\kappa(f_1, \dots, f_m;\, x;\, v_\bullet),
\]

where \(\Part_m(\gamma, \kappa)\) counts \(m\)-fold partitions of the
slot set \(S(\gamma)\) with profile \(\kappa\). Writing
\(\lf(\lambda) \in \IN_0^k\) for the leaf multi-index of \(\lambda\),
obtained by recursively summing the leaves of \(\lambda\) with
multiplicity, the coefficients vanish unless
\(\sum_\lambda \kappa(\lambda)\, \lf(\lambda) = \gamma\), and satisfy
the closed recursion
\(\Part_1(\gamma, \alpha) = \delta_{\gamma, \alpha}\) and, for
\(m \geq 2\),

\[
  \Part_m(\gamma, \kappa) = \frac{\gamma!}{\kappa!}\, \prod_{\substack{\beta \in \KM_+^{m-1}(S) \\ \alpha = \lf(\beta)}} \left( \frac{1}{\alpha!} \cdot \Part_{m-1}(\alpha, \beta) \right)^{\kappa(\beta)}.
\]

For \(m = 2\) this is the Constantine--Savits coefficient
\(\Part_2(\gamma, \kappa) = \gamma! / (\kappa!\, \prod_\beta (\beta!)^{\kappa(\beta)})\).

\end{theorem}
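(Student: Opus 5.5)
The proof has three stages: reduce to polynomials via jets, derive the partition formula in the fiber \(t=0\), then compute the coefficients by counting.

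\textbf{Reduction to polynomial jets.} I will use the jet composition identity stated above,
\[
T^n(f_m\circ\cdots\circ f_1;x)=\pi_{\le n}\bigl(T^n(f_m;x_{m-1})\circ\cdots\circ T^n(f_1;x_0)\bigr).
\]
For \(|\gamma|\le n\), the mixed derivative \(D^\gamma(\cdot;x;v_\bullet)\) of a \(C^n\) map at \(x\) is determined by its degree-\(\le n\) Taylor jet. The same holds for every iterated increment \(D^\kappa(f_1,\dots,f_m;x;v_\bullet)\), since it is assembled only from derivatives \(D^j f_r(x_{r-1})\) of order \(j\le|\gamma|\le n\). So I may replace each \(f_r\) by its Taylor polynomial \(p_r=T^n(f_r;x_{r-1})\).

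Each \(p_r\) is a polynomial map between Banach spaces, i.e.\ a finite sum of bounded symmetric multilinear forms. I will check that the algebraic arguments over \(A_k^\gamma\) work verbatim for such maps. Each \(p_r\) extends to \(X_{r-1}\otimes A_k^\gamma\to X_r\otimes A_k^\gamma\) by multilinearity, and the only things used are the extension and the coefficient extraction. The justification is that \(A_k^\gamma\) is a finite free \(\IR\)-module, so no completion issues arise.

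\textbf{Partition formula in the special fiber.} In \(A_k^\gamma\), with \(\gamma_i\) copies of each \(\eps_i\), I split into slots: write \(A_k^\gamma\) as the image of the square-zero cube algebra \(A_{S(\gamma)}\) under \(\eps_{(i,j)}\mapsto\eps_i\). The \(m\)-fold composite then expands as a nested Taylor expansion.

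For \(m=2\) this is exactly the partition Faà di Bruno formula over \(\Part(S(\gamma))\), the \(t=0\) fiber of Theorem~\ref{thm:fdb-deformation}: overlapping coverings pick up positive powers of \(t\) and vanish. By induction on \(m\), substituting the \((m-1)\)-fold expansion of the inner composite into the outer Taylor expansion yields a sum over \(m\)-fold iterated partitions of \(S(\gamma)\) of the nested increments. Grouping these by profile \(\kappa\) gives the coefficient \(\Part_m(\gamma,\kappa)\). The coefficient vanishes unless the leaves add up, \(\sum_\lambda\kappa(\lambda)\lf(\lambda)=\gamma\), because the blocks partition the slot set.

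\textbf{The counting recursion.} This is the main obstacle. An \(m\)-fold partition with profile \(\kappa\) is an outer partition of \(S(\gamma)\) into blocks. Each block carries an \((m-1)\)-fold partition with some profile \(\beta\), and that block has leaf multi-index \(\alpha=\lf(\beta)\).

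I will count these in three steps:
\begin{enumerate}
\item The number of ways to distribute the slots of \(S(\gamma)\) into a labelled list of blocks with prescribed leaf multi-indices is the multinomial coefficient \(\gamma!/\prod\alpha!\). Each color \(i\) is distributed independently.
\item Each block of type \(\beta\) is then given one of \(\Part_{m-1}(\alpha,\beta)\) internal structures.
\item Finally, I divide by \(\prod_\beta\kappa(\beta)!=\kappa!\) to forget the ordering among blocks of the same type.
\end{enumerate}
The subtle points are two. First, slots of the same color are considered indistinguishable in the final monomial, which forces careful bookkeeping between labelled slots and multi-indices. Second, blocks with equal \(\beta\) but different internal structures are still interchangeable only within the same type. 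I would verify both against the base case \(\Part_1(\gamma,\alpha)=\delta_{\gamma\alpha}\). With \(\Part_1(\alpha,\beta)=\delta_{\alpha\beta}\), the case \(m=2\) reduces the product to \(\prod(1/\beta!)^{\kappa(\beta)}\), which recovers the Constantine--Savits coefficient.
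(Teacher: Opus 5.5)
Your proposal is correct and follows the paper's proof essentially step for step. You reduce to Taylor jets via Taylor composition, apply the square-free partition formula to the slot set $S(\gamma)$ and group by profile, and use the same count $\gamma!/(\kappa!\prod_\beta(\lf(\beta)!)^{\kappa(\beta)})\cdot\prod_\beta\Part_{m-1}(\lf(\beta),\beta)^{\kappa(\beta)}$. The paper gets the $m=2$ partition formula directly from nilpotency in $A_k$ rather than from the $t=0$ fiber, which is immaterial. One small slip: $\eps_{(i,j)}\mapsto\eps_i$ is not an algebra map out of $A_{S(\gamma)}$ when $\gamma_i\ge 2$, since $\eps_i^2\neq 0$ in $A_k^\gamma$. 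The valid comparison goes the other way, $\eps_i\mapsto\sum_j\eps_{(i,j)}$, but you do not need it, because $D^\gamma$ is by definition the square-free $D$ applied to the slot directions $v_\bullet^{\times\gamma}$.
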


A final section collects five short applications of the discrete
calculus, among them discrete and Fréchet product rules, governed by
ordered coverings and ordered partitions respectively, and an easy proof
that polynomial maps between abelian groups are closed under
composition, with \(\deg(f \circ g) \leq \deg f \cdot \deg g\). The
paper is accompanied by a software package, available on
\href{https://github.com/HeinrichHartmann/Discrete-Faa-di-Bruno}{GitHub},
containing symbolic expansions and validations of the covering formula
(degree \(4\), ca. \(2 \cdot 10^6\) terms) and of the Duarte--Torres
recursion (degree \(10\), ca. \(115{,}000\) terms).

Taken together: Boolean finite differences, binomial grid formulas,
infinitesimal Taylor algebras, and Fréchet derivatives are four fibers
of one Möbius-dual Taylor calculus.

\subsection{Related work}\label{related-work}

The closest discrete predecessor is the formula of Duarte and Torres
\citep{DT2008}, and the present work grew out of an attempt to
understand their dense and insightful paper properly, and to repackage
its results in a more functorial form. Their discrete Faà di Bruno
formula is indexed by partitions and uses recursively shifted basepoints
and corrected directions; the covering formula in this paper writes all
terms at the original basepoint \(g(x)\) and is indexed by coverings of
the direction set. The covering formula is fully expanded at the
original basepoint \(g(x)\); the Duarte--Torres formula is best
understood as a grouping of the same terms by shifted basepoint,
collecting all covering contributions that share a common recursively
shifted evaluation point into a single partition-indexed term. Thus the
two approaches describe the same finite-difference chain rule in
different coordinates: Duarte--Torres gives a compact recursive
partition formula with shifted basepoints and corrected directions,
whereas the present paper gives a closed fixed-basepoint covering
formula.

A second neighboring literature is the divided-difference form of Faà di
Bruno. Floater and Lyche \citep{FL2007} derive chain rules for divided
differences \([t_0, \dots, t_n]\) of univariate functions, organized by
the linear order of interpolation nodes; related divided-difference
forms were obtained by Wang and Wang \citep{WW2006} and extended by Xu
and Wang \citep{XW2010}. Our finite-difference formula is cubical rather
than linearly ordered: it concerns \(\Delta(g; x; u_1, \dots, u_k)\) at
one basepoint with independent directions, and its natural combinatorics
is that of coverings of \([k]\). In the univariate equal-step case,
normalized forward differences are divided differences on a uniform
grid, so the two settings meet after specialization. The deformation
cube makes this connection explicit: the coefficients \(c_S(t)\) of the
family \(C_k\) are precisely difference quotients that extend through
\(t = 0\) to derivatives. A general calculus built directly on such
scaled difference-quotient maps has been developed by Bertram, Glöckner,
and Neeb \citep{BGN2004}; our flat family can be read as a polynomial,
higher-order companion to their first-order framework.

On the smooth side, the multi-index Faà di Bruno formula of Constantine
and Savits \citep{CS1996} is one of the main targets recovered here. Our
infinitesimal Taylor algebra \(A_k^\nu\) gives a direct Möbius-dual
derivation of their coefficients from the square-free partition formula
by grouping repeated directions; closely related bookkeeping between
multi-indices and partitions of multisets appears in Hardy
\citep{Hardy2006}. The slot-set construction \(S(\gamma)\) provides a
uniform way to pass between partition formulas and multi-index formulas.
Iterated one-dimensional Faà di Bruno formulas have been studied through
higher-order Bell polynomials by Natalini and Ricci \citep{NR2004}; the
iterated formula in this paper gives a multivariate version for
\(m\)-fold composites with recursively defined partition coefficients.

The finite-difference operators used here also belong to the older
theory of cross-effects and polynomial maps. The alternating expression
\(\Delta(g; x; u_1, \dots, u_k)\) is the \(k\)-fold deviation or
cross-effect of a map between abelian groups, in the sense of Eilenberg
and Mac Lane \citep{EML1954}. Vanishing of sufficiently high differences
is the classical finite-difference characterization of polynomial
operations, going back to Mazur and Orlicz \citep{MO1934} and appearing
in later work such as Leibman's theory of polynomial mappings of groups
\citep{Leibman2002}; in the different setting of functors between
abelian categories, Bauer, Johnson, Osborne, Riehl, and Tebbe
\citep{BJORT2018} prove a categorified higher-order chain rule for
cross-effects, where equalities are replaced by chain homotopy
equivalences and similar combinatorics governs the resulting Faà di
Bruno decomposition. The present paper works entirely at the level of
maps between abelian groups, where the covering Faà di Bruno formula is
an explicit, exact chain rule for cross-effects; as a corollary,
polynomial maps are closed under composition with
\(\deg(f \circ g) \leq \deg f \cdot \deg g\) \ref{cor:degree-bound}.

Finally, Faà di Bruno formulas have a rich algebraic life in coalgebras,
Hopf algebras, and incidence algebras, beginning with work of Joni and
Rota \citep{JR1979} and surveyed from several viewpoints by Frabetti and
Manchon \citep{FM2014}. The present paper does not develop the
Hopf-algebraic formalism. Instead, it uses ordinary Möbius inversion on
Boolean and multi-index grids, together with the cube algebras \(B_k\),
\(A_k\), and their deformation \(C_k\).

\section{Möbius inversion}\label{muxf6bius-inversion}

Möbius inversion generalizes the elementary principle that individual
terms of a sum can be extracted via alternating differences. In its
general form, it applies to any locally finite partially ordered set
(see \citep{Stanley2012} for the general theory). In this text we need
it for Boolean lattices of subsets, and for the componentwise partial
order on multi-indices.

We write \([n] = \set{1, \dots, n}\), \(\KP(S)\) for the power set of a
finite set \(S\), \(\KP_+(S) = \KP(S) \setminus \set{\emptyset}\) for
the set of nonempty subsets, and \(\KP(n)\) for \(\KP([n])\). Note
\(|\KP(k)| = 2^k\). We think of maps \(a: \KP(S) \to X\) as \emph{cubes}
in \(X\) with \(2^{|S|}\) vertices \(a(T)\) for \(T \subseteq S\) and
legs \(a(\set{i})\).

\begin{proposition}[Boolean Möbius inversion]

\label{prop:moebius-inversion} Let \(G\) be an abelian group. For a cube
\(a: \KP(k) \to G\), set

\[
  \zeta(a; S) := \sum_{T \subseteq S} a(T), \qquad \mu(a; S) := \sum_{T \subseteq S} (-1)^{|S|-|T|}\, a(T).
\]

Then \(\zeta\) and \(\mu\) define inverse bijections on
\(\mathrm{Map}(\KP(k), G)\).

\end{proposition}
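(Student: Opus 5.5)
Both \(\zeta\) and \(\mu\) are \(\IZ\)-linear (indeed additive) endomorphisms of the abelian group \(\mathrm{Map}(\KP(k), G)\). It therefore suffices to verify the two composition identities \(\mu \circ \zeta = \mathrm{id}\) and \(\zeta \circ \mu = \mathrm{id}\); bijectivity then follows. The plan is a direct computation: expand the double sum, swap the order of summation, and reduce everything to one elementary identity for alternating sums over a Boolean interval.

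The key lemma is the following. For finite sets \(U \subseteq S\),
\[
  \sum_{U \subseteq T \subseteq S} (-1)^{|S|-|T|} = [U = S].
\]
To prove it, parametrize \(T = U \cup W\) with \(W \subseteq S \setminus U\). The sum becomes \(\sum_{j} \binom{n}{j} (-1)^{n-j} = (1-1)^n\), where \(n = |S \setminus U|\). This equals \(0\) for \(n \geq 1\) and \(1\) for \(n = 0\). The binomial theorem is used here only with integer coefficients, so the result holds in \(\IZ\) and hence acts on any abelian group.

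For the first composition, compute
\[
  \mu(\zeta(a); S) = \sum_{T \subseteq S} (-1)^{|S|-|T|} \sum_{U \subseteq T} a(U)
  = \sum_{U \subseteq S} \Bigl( \sum_{U \subseteq T \subseteq S} (-1)^{|S|-|T|} \Bigr) a(U) = a(S),
\]
where the last equality is the lemma. For the second composition, swapping sums in the same way gives
\[
  \zeta(\mu(a); S) = \sum_{U \subseteq S} \Bigl( \sum_{U \subseteq T \subseteq S} (-1)^{|T|-|U|} \Bigr) a(U).
\]
The sign \((-1)^{|T|-|U|}\) differs from the lemma's sign \((-1)^{|S|-|T|}\) by the constant factor \((-1)^{|S|-|U|}\). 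On \([U = S]\) this factor equals \(1\), so the same lemma applies and the expression collapses to \(a(S)\).

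There is no real obstacle here. The only point needing care is that the interchange of finite sums and the integer coefficients are legitimate in an arbitrary abelian group, with no division involved, and that the sign bookkeeping in the second composition is handled correctly.
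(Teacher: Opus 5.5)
Your proof is correct and follows the paper's own route: exchange the order of summation and reduce to the alternating-sum identity $(1-1)^{n}=\delta_{n,0}$, applied on the interval between $U$ and $S$ (this is the paper's $\sum_{T\subseteq S}(-1)^{|S|-|T|}=\delta_{S,\emptyset}$ applied to $S\setminus U$). Your explicit treatment of both compositions, including the sign factor $(-1)^{|S|-|U|}$ in $\zeta\circ\mu$, spells out what the paper leaves implicit.
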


\begin{proof}

Exchange summation order and use
\((1-1)^{|S|} = \sum_{T \subseteq S} (-1)^{|S|-|T|} = \delta_{S,\emptyset}\).

\end{proof}

\begin{definition}[Multi-indices and slot sets]

\label{def:multi-index}

\begin{itemize}
\tightlist
\item
  A \emph{multi-index} on a finite set \(S\) is a map
  \(\alpha: S \to \IN_0\), i.e.~an element of \(\IN_0^S\). The
  componentwise partial order is \(\beta \leq \alpha\) iff
  \(\beta_s \leq \alpha_s\) for all \(s\).
\item
  The \emph{weight} is
  \(|\alpha| = \mathrm{wt}(\alpha) = \sum_{s \in S} \alpha_s\), the
  \emph{height} is \(\mathrm{ht}(\alpha) = \max_{s \in S} \alpha_s\).
\item
  The \emph{factorial} is \(\alpha! = \prod_{s \in S} \alpha_s!\), the
  \emph{falling factorial} is
  \((\alpha)_\beta = \prod_{s \in S} (\alpha_s)_{\beta_s}\) where
  \((n)_r = n(n-1)\cdots(n-r+1)\).
\item
  A \emph{Boolean realization} of \(\alpha \in \IN_0^S\) is a set
  \(S(\alpha) = \set{(s, i) : s \in S,\, 0 < i \leq \alpha_s}\) with
  canonical projection \(\pi: S(\alpha) \to S\).
\item
  The \emph{fiber measure} of a map \(q: S' \to S\) of finite sets is
  \(\nu(q) \in \IN_0^S\), \(\nu(q)_s = |q^{-1}(s)|\). In particular
  \(\nu(\pi) = \alpha\) and \(|S'| = \mathrm{wt}(\nu(q))\).
\item
  We embed \(\KP(S) \hookrightarrow \IN_0^S\) by \(U \mapsto 1_U\); the
  image consists of multi-indices of height \(\leq 1\), and
  \(S(1_U) \cong U\).
\end{itemize}

\end{definition}

We think of maps \(A: \IN_0^S \to X\) as \emph{grids} in \(X\). The
points \(A(1_T)\) for \(T \subseteq S\) form the coordinate cube; the
remaining points \(A(\alpha)\) for \(\mathrm{ht}(\alpha) > 1\) fill the
interior of the grid.

We define binomial Möbius inversion via the Boolean realization of
multi-indices.

\begin{proposition}[Binomial Möbius inversion]

\label{prop:multi-index-moebius} Let \(G\) be an abelian group. For a
map \(A: \IN_0^S \to G\) and \(\alpha \in \IN_0^S\), let
\(A_\alpha = A \circ \nu: \KP(S(\alpha)) \xrightarrow{\nu} \IN_0^S \xrightarrow{A} G\).
Set

\[
  \zeta(A; \alpha) := \zeta(A_\alpha; S(\alpha)) = \sum_{\beta \leq \alpha} \frac{(\alpha)_\beta}{\beta!}\, A(\beta), \qquad \mu(A; \alpha) := \mu(A_\alpha; S(\alpha)) = \sum_{\beta \leq \alpha} (-1)^{\mathrm{wt}(\alpha - \beta)}\, \frac{(\alpha)_\beta}{\beta!}\, A(\beta).
\]

Then \(\zeta\) and \(\mu\) define inverse bijections on
\(\mathrm{Map}(\IN_0^S, G)\), extending the Boolean case.

\end{proposition}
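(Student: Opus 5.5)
The plan is to reduce everything to Boolean Möbius inversion (Proposition~\ref{prop:moebius-inversion}) on the cubes \(\KP(S(\alpha))\). The one structural fact needed is that these cubes are compatible with restriction to subcubes. I would proceed in three steps.

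\emph{Step 1: the closed forms.} For \(T \subseteq S(\alpha)\), the fiber measure \(\nu(T) = \nu(\pi|_T)\) is a multi-index \(\beta \leq \alpha\). For fixed \(\beta\), the number of \(T\) with \(\nu(T) = \beta\) is \(\prod_s \binom{\alpha_s}{\beta_s} = (\alpha)_\beta/\beta!\), since such a \(T\) is a choice of \(\beta_s\) elements in each fiber \(\pi^{-1}(s)\), which has size \(\alpha_s\). Moreover \(|S(\alpha)| - |T| = \mathrm{wt}(\alpha) - \mathrm{wt}(\beta) = \mathrm{wt}(\alpha-\beta)\). Grouping the Boolean sums \(\zeta(A_\alpha; S(\alpha))\) and \(\mu(A_\alpha; S(\alpha))\) by the value of \(\nu(T)\) then gives exactly the stated binomial expressions.

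\emph{Step 2: restriction compatibility.} This is the key point and the only real obstacle. Fix \(T \subseteq S(\alpha)\) with \(\nu(T) = \beta\). Choose a bijection \(\phi: S(\beta) \to T\) commuting with the projections to \(S\); one exists because the fiber sizes agree, both being \(\beta_s\). Then \(\phi\) induces an isomorphism \(\KP(S(\beta)) \cong \KP(T)\) preserving \(\nu\). Consequently the restriction of the cube \(A_\alpha\) to \(\KP(T)\) is identified with \(A_\beta\).

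Since the Boolean operators \(\zeta(\,\cdot\,; T)\) and \(\mu(\,\cdot\,; T)\) only involve values on subsets of \(T\), and are invariant under relabeling, we get
\[
\mu(A; \nu(T)) = \mu(A_\alpha; T), \qquad \zeta(A; \nu(T)) = \zeta(A_\alpha; T).
\]
In other words, writing \(B = \mu(A)\), the cube \(B_\alpha = B \circ \nu\) on \(\KP(S(\alpha))\) equals the Boolean Möbius transform \(\mu(A_\alpha)\) as a map on \(\KP(S(\alpha))\). The same holds with \(\zeta\) in place of \(\mu\).

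\emph{Step 3: inversion.} Using Step 2 and then Boolean Möbius inversion,
\[
\zeta(\mu(A); \alpha) = \zeta(\mu(A)_\alpha; S(\alpha)) = \zeta(\mu(A_\alpha); S(\alpha)) = A_\alpha(S(\alpha)) = A(\alpha),
\]
and symmetrically \(\mu(\zeta(A); \alpha) = A(\alpha)\). Hence \(\zeta\) and \(\mu\) are mutually inverse bijections on \(\mathrm{Map}(\IN_0^S, G)\).

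Finally, for \(\alpha = 1_U\) we have \(S(\alpha) \cong U\) and \((\alpha)_\beta/\beta! = 1\) for every \(\beta \leq \alpha\). Restricted to height \(\leq 1\), the formulas therefore reduce to those of Proposition~\ref{prop:moebius-inversion}, which gives the claimed extension of the Boolean case.
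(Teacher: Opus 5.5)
Your proposal is correct and follows essentially the same route as the paper. Both arguments count subsets of \(S(\alpha)\) by profile to get the closed forms, and both identify the restriction of \(A_\alpha\) to \(\KP(T)\) with \(A_\beta\), which gives \((\zeta A)_\alpha = \zeta(A_\alpha)\) and \((\mu A)_\alpha = \mu(A_\alpha)\); both then apply Boolean Möbius inversion on \(\KP(S(\alpha))\), and your explicit fiber-preserving bijection \(S(\beta) \cong T\) makes precise the identification the paper states in one line.
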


\begin{proof}

The number of subsets \(T \subseteq S(\alpha)\) with \(\nu(T) = \beta\)
is \(\frac{(\alpha)_\beta}{\beta!}\), giving the explicit sums. For the
inversion, observe that the cube \(A_\alpha = A \circ \nu\) satisfies
\((\zeta A)_\alpha = \zeta(A_\alpha)\): for \(T \subseteq S(\alpha)\)
with \(\nu(T) = \beta\), the restriction of \(A_\alpha\) to \(\KP(T)\)
is the realization of \(A_\beta\) on \(T\), hence
\(\zeta(A_\alpha; T) = \sum_{R \subseteq T} A(\nu(R)) = \zeta(A; \beta)\).
Applying Boolean Möbius inversion \ref{prop:moebius-inversion} to
\(A_\alpha\) on \(\KP(S(\alpha))\):
\(\mu(\zeta A; \alpha) = \mu((\zeta A)_\alpha; S(\alpha)) = \mu(\zeta(A_\alpha); S(\alpha)) = A_\alpha(S(\alpha)) = A(\alpha)\),
and symmetrically for \(\zeta \mu\). Alternatively, exchange summation
order, use
\(\binom{\alpha}{\beta}\binom{\beta}{\gamma} = \binom{\alpha}{\gamma}\binom{\alpha-\gamma}{\beta-\gamma}\)
to decouple, and apply the multi-index binomial theorem
\((1-1)^\alpha = \sum_{\beta \leq \alpha} (-1)^{\mathrm{wt}(\beta)} \binom{\alpha}{\beta} = \delta_{\alpha,0}\).

\end{proof}

\section{Discrete Möbius calculus}\label{discrete-muxf6bius-calculus}

This section develops the composition calculus of forward differences
for arbitrary maps between abelian groups: Taylor duality, the covering
Faà di Bruno formula, and their iterated and binomial refinements with
the coefficient systems \(\mathrm{Pow}_m\) and \(\Cov_m\). All
identities are exact with integer coefficients. Every formula evaluates
maps only at the points \(x + \sum_{i \in S} u_i\), so the domain may in
fact be any commutative monoid; we state results for abelian groups and
use monoid domains such as \(\IN_0\) in the applications.

\subsection{Taylor duality}\label{taylor-duality}

\begin{definition}[Forward differences and translations]

\label{def:forward-diff} Let \(X, Y\) be abelian groups, \(g: X \to Y\),
\(x \in X\), and directions \(u_\bullet = (u_1, \dots, u_k) \in X^k\).
The \emph{iterated forward difference} and the \emph{iterated
translation} are:

\[
  \Delta(g; x; u_\bullet) := \sum_{S \subseteq [k]} (-1)^{k-|S|}\, g(x + {\textstyle\sum_{i \in S}} u_i), \qquad T(g; x; u_\bullet) := g(x + {\textstyle\sum_{i=1}^k} u_i).
\]

For \(S \subseteq [k]\), write \(u_S = (u_i)_{i \in S}\). As \(\Delta\)
is invariant under permutation of directions, we can define:

\[
  \Delta^S(g; x; u_\bullet) := \Delta(g; x; u_S), \qquad T^S(g; x; u_\bullet) := g(x + {\textstyle\sum_{i \in S}} u_i).
\]

For \(\alpha \in \IN_0^k\), write
\(u_\bullet^\alpha = (u_1^{\times \alpha_1}, \dots, u_k^{\times \alpha_k})\)
for the direction list with \(u_i\) repeated \(\alpha_i\) times. Then:

\[
  \Delta^\alpha(g; x; u_\bullet) := \Delta(g; x; u_\bullet^\alpha) = \sum_{\beta \leq \alpha} (-1)^{|\alpha|-|\beta|}\, \frac{(\alpha)_\beta}{\beta!}\, g(x + {\textstyle\sum_i} \beta_i u_i), \qquad T^\alpha(g; x; u_\bullet) := g(x + {\textstyle\sum_{i=1}^k} \alpha_i u_i),
\]

where we grouped the \(2^{|\alpha|}\) Boolean terms of
\(\Delta(g; x; u_\bullet^\alpha)\) by profile.

\end{definition}

\begin{proposition}[Taylor duality]

\label{prop:taylor-duality} Let \(X, Y\) be abelian groups,
\(g: X \to Y\), \(x \in X\), \(u_\bullet = (u_1, \dots, u_k) \in X^k\),
and \(y = g(x)\).

\begin{enumerate}
\def\labelenumi{\arabic{enumi})}
\tightlist
\item
  \emph{Boolean Taylor duality.}
\end{enumerate}

\[
  g(x + {\textstyle\sum_{i=1}^k} u_i) = y + \sum_{\emptyset \neq S \subseteq [k]} \Delta(g; x; u_S), \qquad \Delta(g; x; u_\bullet) = \sum_{S \subseteq [k]} (-1)^{k-|S|}\, g(x + {\textstyle\sum_{i \in S}} u_i).
\]

\begin{enumerate}
\def\labelenumi{\arabic{enumi})}
\setcounter{enumi}{1}
\tightlist
\item
  \emph{Binomial Taylor duality.} For \(\alpha \in \IN_0^k\):
\end{enumerate}

\[
  g(x + {\textstyle\sum_{i=1}^k} \alpha_i u_i) = y + \sum_{0 < \beta \leq \alpha} \frac{(\alpha)_\beta}{\beta!}\, \Delta(g; x; u_\bullet^\beta), \qquad \Delta(g; x; u_\bullet^\alpha) = \sum_{\beta \leq \alpha} (-1)^{|\alpha|-|\beta|}\, \frac{(\alpha)_\beta}{\beta!}\, g(x + {\textstyle\sum_{i=1}^k} \beta_i u_i).
\]

\end{proposition}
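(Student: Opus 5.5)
Both parts of the statement come from applying the Möbius inversion results to one specific cube or grid attached to $g$ at $x$. For the Boolean part, I define the cube $a:\KP(k)\to Y$ by $a(S) := g(x + \sum_{i\in S} u_i)$. By Definition~\ref{def:forward-diff}, the forward difference is exactly its Möbius transform:
\[
\Delta(g;x;u_S) = \sum_{T\subseteq S}(-1)^{|S|-|T|}\,g(x+{\textstyle\sum_{i\in T}}u_i) = \mu(a;S).
\]
This is the second identity of part 1). The point to check here is that the sub-cube $T\subseteq S$ involves only the directions $u_S$, so restricting to $\KP(S)$ is compatible with $\Delta(g;x;u_S)$.

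By Proposition~\ref{prop:moebius-inversion}, $a = \zeta(\mu(a))$. Evaluating at $S=[k]$ gives
\[
g(x+{\textstyle\sum_i} u_i) = \sum_{S\subseteq[k]}\Delta(g;x;u_S).
\]
The term $S=\emptyset$ is $\Delta(g;x;\,)=g(x)=y$, since the empty difference is the value itself. Splitting it off gives the first identity.

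For the binomial part, I define the grid $A:\IN_0^k\to Y$ by $A(\beta) := g(x+\sum_i\beta_iu_i)$. The second displayed identity is the grouped form of $\Delta(g;x;u_\bullet^\alpha)$ already recorded in Definition~\ref{def:forward-diff}, so it reads $\Delta(g;x;u_\bullet^\alpha)=\mu(A;\alpha)$ in the notation of Proposition~\ref{prop:multi-index-moebius}. More conceptually, on the Boolean realization $S(\alpha)$ the repeated direction list $u_\bullet^\alpha$ is indexed by $S(\alpha)$. For $R\subseteq S(\alpha)$ the corresponding vertex is $A(\nu(R))$, so $\Delta(g;x;u_\bullet^\alpha)=\mu(A_\alpha;S(\alpha))$. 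Proposition~\ref{prop:multi-index-moebius} then gives $A=\zeta(\mu A)$, that is,
\[
g(x+{\textstyle\sum_i}\alpha_iu_i) = \sum_{\beta\le\alpha}\frac{(\alpha)_\beta}{\beta!}\,\Delta(g;x;u_\bullet^\beta).
\]
Separating $\beta=0$, whose coefficient is $1$ and whose value is $y$, gives the first identity.

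None of these steps is genuinely hard. The only real work is the identification $\mu(A;\beta)=\Delta(g;x;u_\bullet^\beta)$ for every $\beta\le\alpha$, and not merely for $\beta=\alpha$. This holds because the definition of $\Delta^\beta$ depends only on $\beta$ and not on the ambient $\alpha$. It is exactly the restriction compatibility used in the proof of Proposition~\ref{prop:multi-index-moebius}. Alternatively, part 2) can be read off from part 1) applied to the direction list $u_\bullet^\alpha$: subsets $T\subseteq S(\alpha)$ with profile $\nu(T)=\beta$ number $(\alpha)_\beta/\beta!$, and each contributes $\Delta(g;x;u_\bullet^\beta)$ by permutation invariance of $\Delta$.
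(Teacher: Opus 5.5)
Your proof is correct and follows the paper's argument. In each part, the second identity is the definition of the forward difference (in part 2, its grouped-by-profile form), and the first identity is its Boolean, respectively binomial, Möbius inverse via Propositions~\ref{prop:moebius-inversion} and~\ref{prop:multi-index-moebius}. You also spell out two points the paper leaves implicit: that the $S=\emptyset$ (resp.\ $\beta=0$) term equals $y$, and that $\mu(A;\beta)=\Delta(g;x;u_\bullet^\beta)$ holds for every $\beta\le\alpha$.
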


\begin{proof}

\begin{enumerate}
\def\labelenumi{\arabic{enumi})}
\item
  The second identity is the definition of \(\Delta\). The first is its
  Boolean Möbius inverse \ref{prop:moebius-inversion}.
\item
  The second identity is the grouped form of \(\Delta^\alpha\)
  \ref{def:forward-diff}. The first is its Binomial Möbius inverse
  \ref{prop:multi-index-moebius}.
\end{enumerate}

\end{proof}

\subsection{Faà di Bruno duality}\label{fauxe0-di-bruno-duality}

Write \(\KP_+(k)\) for the set of nonempty subsets of \([k]\),
\(\KP_+^2(k) = \KP_+(\KP_+(k))\) for the set of nonempty families of
such subsets, and
\(\Cov(k) = \set{H \in \KP_+^2(k) \mid \bigcup H = [k]}\) for the set of
all coverings of \([k]\) by nonempty subsets.

\begin{theorem}[Discrete Faà di Bruno Duality]

\label{thm:dfdb}

Let \(X, Y, Z\) be abelian groups, \(g: X \to Y\), \(f: Y \to Z\)
arbitrary maps, \(x \in X\), \(u_1, \dots, u_k \in X\). Write
\(y = g(x)\) and \(z = f(y)\). Then:

\begin{enumerate}
\def\labelenumi{\arabic{enumi})}
\tightlist
\item
  \emph{Boolean Faà di Bruno} (\(k \geq 1\))\emph{:}
\end{enumerate}

\[
  \Delta(f \circ g; x; u_\bullet)
  =
  \sum_{H \in \Cov(k)}
  \Delta(f; y; (\Delta(g; x; u_T))_{T \in H}).
\]

\begin{enumerate}
\def\labelenumi{\arabic{enumi})}
\setcounter{enumi}{1}
\tightlist
\item
  \emph{Boolean Taylor composition:}
\end{enumerate}

\[
  (f \circ g)(x + {\textstyle\sum_{i=1}^k} u_i) = z + \sum_{H \in \KP_+^2(k)} \Delta(f; y; (\Delta(g; x; u_T))_{T \in H}).
\]

\end{theorem}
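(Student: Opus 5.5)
I would prove the Taylor composition 2) first, for every subset of directions at once, and then get 1) from it by one Boolean Möbius inversion \ref{prop:moebius-inversion}.

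\emph{Step 1 (Taylor composition).} Fix $S \subseteq [k]$. By Boolean Taylor duality \ref{prop:taylor-duality} applied to $g$,
\[
  g(x + {\textstyle\sum_{i \in S}} u_i) = y + \sum_{\emptyset \neq T \subseteq S} g_T, \qquad g_T := \Delta(g; x; u_T).
\]
Now apply $f$ and use Taylor duality again, this time for $f$ at the basepoint $y$. The directions are the family $(g_T)_{T \in \KP_+(S)}$, indexed by the finite set $\KP_+(S)$ instead of by $[k]$. Since $\Delta$ is symmetric in its directions, duality holds for any finite index set, so
\[
  (f \circ g)(x + {\textstyle\sum_{i\in S}} u_i) = z + \sum_{\emptyset \neq H \subseteq \KP_+(S)} \Delta(f; y; (g_T)_{T \in H}).
\]
Taking $S = [k]$ gives 2) verbatim.

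\emph{Step 2 (Möbius inversion).} Define the cube $a : \KP(k) \to Z$ by
\[
  a(\emptyset) = z, \qquad a(U) = \sum_{H \in \KP_+^2(k),\ \bigcup H = U} \Delta(f; y; (g_T)_{T \in H}) \quad (U \neq \emptyset).
\]
Each nonempty family $H \subseteq \KP_+(S)$ has a union $U = \bigcup H$ with $\emptyset \neq U \subseteq S$. Grouping the families in Step 1 by their union therefore rewrites the identity as
\[
  (f\circ g)(x + {\textstyle\sum_{i \in S}} u_i) = \sum_{U \subseteq S} a(U) = \zeta(a; S).
\]
By the definition of $\Delta$, the Möbius transform of $S \mapsto (f \circ g)(x + \sum_{i\in S} u_i)$ evaluated at $[k]$ is $\Delta(f \circ g; x; u_\bullet)$. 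Proposition~\ref{prop:moebius-inversion} gives $\mu(\zeta(a); [k]) = a([k])$, so $\Delta(f\circ g; x; u_\bullet) = a([k])$. Since $k \geq 1$, $a([k])$ is exactly the sum over coverings $H \in \Cov(k)$, which is 1).

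\emph{Main obstacle.} The only delicate point is bookkeeping. First, the inner sum over $H \subseteq \KP_+(S)$ must be identified with the restriction to $S$ of the global sum over $\KP_+^2(k)$, grouped by union. Second, $a(U)$ must be checked to depend only on $U$ and not on the ambient $S$. Both hold because $\KP_+(U) \subseteq \KP_+(S)$ whenever $U \subseteq S$: a family has union $U$ exactly when all its blocks lie in $U$ and together cover $U$. No cancellation argument is needed beyond Möbius inversion.
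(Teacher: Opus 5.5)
Your proposal is correct and follows the paper's proof essentially step for step. It proves Taylor composition over every subset $S$ by applying Taylor duality twice, then groups the families $H$ by their union and applies one Boolean Möbius inversion. The only cosmetic difference is that you put the constant $z$ into $a(\emptyset)$, whereas the paper subtracts it and works with $\varphi(S)=\sum_{H\in\Cov(S)}\Delta(f;y;(g_T)_{T\in H})$.
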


\begin{proof}

\begin{enumerate}
\def\labelenumi{\arabic{enumi})}
\setcounter{enumi}{1}
\item
  Write \(g_T := \Delta(g; x; u_T)\). By Taylor duality
  \ref{prop:taylor-duality},
  \(T(g; x; u_S) = y + \sum_{\emptyset \neq T \subseteq S} g_T\).
  Applying Taylor duality to \(f\) at \(y\) in directions
  \((g_T)_{T \in \KP_+(S)}\) gives
  \(T(f \circ g; x; u_S) = z + \sum_{H \in \KP_+^2(S)} \Delta(f; y; (g_T)_{T \in H})\).
\item
  Define
  \(\varphi(S) := \sum_{H \in \Cov(S)} \Delta(f; y; (g_T)_{T \in H})\).
  By Möbius inversion \ref{prop:moebius-inversion}, it suffices to show
  \(\zeta(\varphi; S) = T(f \circ g; x; u_S) - z\). Indeed,
  \(\zeta(\varphi; S) = \sum_{R \subseteq S} \sum_{H \in \Cov(R)} \Delta(f; y; (g_T)_{T \in H}) = \sum_{H \in \KP_+^2(S)} \Delta(f; y; (g_T)_{T \in H})\),
  since each \(H \in \KP_+^2(S)\) appears in exactly one summand, namely
  the one for \(R = \bigcup H\). Now apply part 2.
\end{enumerate}

\end{proof}

\begin{remark}[Binomial Faà di Bruno]

The Boolean Faà di Bruno formula extends naturally to a binomial version
in terms of multi-indices \(\gamma \in \IN_0^S\), by applying the
Boolean formula to the slot set \(S(\gamma)\) and grouping by profile.
The resulting expression resembles the Constantine--Savits form
\citep{CS1996}; the precise coefficients and a recursive version are
given in \ref{thm:iterated-dfdb} below.

\end{remark}

\subsection{Iterated Faà di Bruno}\label{iterated-fauxe0-di-bruno}

The Boolean Faà di Bruno formula extends to \(m\)-fold compositions,
indexed by \(\Cov_m(k)\). We first set up the combinatorial apparatus:
higher power sets, higher multi-indices, and higher partitions.

\begin{definition}[Higher power sets and coverings]

\label{def:coverings} Recall that \(\KP(S)\) is the power set and
\(\KP_+(S) = \KP(S) \setminus \set{\emptyset}\) the set of nonempty
subsets.

\begin{itemize}
\tightlist
\item
  The \emph{higher power sets} are \(\KP_+^0(S) = S\),
  \(\KP_+^1(S) = \KP_+(S)\), and \(\KP_+^r(S) := \KP_+(\KP_+^{r-1}(S))\)
  for \(r \geq 2\).
\item
  For \(K \in \KP_+^r(S)\), the \emph{leaf support}
  \(\lf(K) \subseteq S\) is defined recursively: \(\lf(K) := K\) for
  \(r = 1\) and \(\lf(K) := \bigcup_{L \in K} \lf(L)\) for \(r \geq 2\).
\item
  We say \(K\) \emph{covers} \(S\) if \(\lf(K) = S\), and write
  \(\Cov_r(S) := \set{K \in \KP_+^r(S) \mid \lf(K) = S}\) for the set of
  \emph{\(r\)-fold coverings}.
\end{itemize}

For \(S = [k]\), we write \(\KP_+^r(k)\) and \(\Cov_r(k)\).

\end{definition}

\begin{definition}[Higher multi-indices]

\label{def:higher-multi-indices} For a set \(S\), let \(\KM(S)\) be the
set of finitely supported maps \(S \to \IN_0\), and
\(\KM_+(S) = \KM(S) \setminus \set{0}\). For finite \(S\),
\(\KM(S) = \IN_0^S\).

\begin{itemize}
\tightlist
\item
  The \emph{iterated multiset sets} are \(\KM_+^0(S) = S\),
  \(\KM_+^1(S) = \KM_+(S)\), and \(\KM_+^{r+1}(S) := \KM_+(\KM_+^r(S))\)
  for \(r \geq 1\).
\item
  For \(\kappa \in \KM_+^r(S)\) with \(r \geq 1\), the \emph{support} is
  \(\supp(\kappa) := \set{\lambda \in \KM_+^{r-1}(S) : \kappa(\lambda) > 0}\).
\item
  The \emph{leaf multi-index} \(\lf(\kappa) \in \IN_0^S\) is defined
  recursively: \(\lf(\alpha) := \alpha\) for \(r = 1\) and
  \(\lf(\kappa) := \sum_{\lambda \in \supp(\kappa)} \kappa(\lambda)\, \lf(\lambda)\)
  for \(r \geq 2\).
\end{itemize}

The \emph{higher profile map} \(\nu: \KP_+^m(S(\gamma)) \to \KM_+^m(S)\)
is defined recursively: \(\nu(T) \in \IN_0^S\) for \(m = 1\) as in
\ref{def:multi-index}, and
\(\nu(K)(\lambda) := \#\set{L \in K : \nu(L) = \lambda}\) for
\(m \geq 2\).

The embedding \(\KP_+(S) \hookrightarrow \KM_+(S)\) via
\(T \mapsto 1_T\) extends to \(\KP_+^r(S) \hookrightarrow \KM_+^r(S)\)
at every level: the Boolean case is the height-one restriction of the
multi-index iteration.

\end{definition}

\begin{definition}[Higher partitions]

\label{def:higher-partitions}

\begin{itemize}
\tightlist
\item
  A \emph{partition} of a finite set \(S\) is a set
  \(\pi = \set{B_1, \dots, B_r}\) of nonempty pairwise disjoint subsets
  with \(B_1 \sqcup \cdots \sqcup B_r = S\). Write \(\Part(S)\) for the
  set of partitions and \(\Part(k)\) for \(\Part([k])\).
\item
  \emph{Higher partitions} are defined recursively:
  \(\Part_1(S) = \set{S}\), and for \(m \geq 1\),
  \(\Part_{m+1}(S) := \set{\set{H_B}_{B \in \pi} \mid \pi \in \Part(S),\; H_B \in \Part_m(B)}\).
  For \(m = 2\), \(\Part_2(S) = \Part(S)\).
\item
  The \emph{weight} of \(H \in \KP_+^r(S)\) is \(\mathrm{wt}(H) := |H|\)
  for \(r = 1\) and \(\mathrm{wt}(H) := \sum_{K \in H} \mathrm{wt}(K)\)
  for \(r \geq 2\).
\item
  A \emph{multi-index partition} of \(\gamma \in \IN_0^S\) is an
  iterated multi-index \(\kappa \in \KM_+^m(S)\) with
  \(\lf(\kappa) = \gamma\); we write \(\kappa \vdash \gamma\). For
  \(m = 2\) this is a multiset \(\kappa\) of nonzero multi-indices with
  \(\sum_\beta \kappa(\beta)\, \beta = \gamma\).
\end{itemize}

\end{definition}

\begin{lemma}[Weight bound]

\label{lem:weight-bound} For any covering \(H \in \Cov_r(S)\) with
\(r \geq 1\), \(\mathrm{wt}(H) \geq |S|\), with equality if and only if
\(H \in \Part_r(S)\).

\end{lemma}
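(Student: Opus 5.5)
Induction on \(r\), working directly from the recursive definitions of \(\lf\), \(\mathrm{wt}\) and \(\Part_r\) in \ref{def:coverings} and \ref{def:higher-partitions}.

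\emph{Base case} \(r = 1\). Here \(H \in \KP_+(S)\) with \(\lf(H) = H = S\), so \(H = S\) and \(\mathrm{wt}(H) = |S|\). Equality always holds, and indeed \(\Part_1(S) = \set{S}\).

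\emph{Inductive step} \(r \geq 2\). Let \(H \in \Cov_r(S)\). Each \(K \in H\) lies in \(\KP_+^{r-1}(S)\), and by definition of leaf support it covers its own leaf set, i.e.\ \(K \in \Cov_{r-1}(\lf(K))\). The induction hypothesis gives \(\mathrm{wt}(K) \geq |\lf(K)|\), with equality iff \(K \in \Part_{r-1}(\lf(K))\). Since \(\bigcup_{K \in H} \lf(K) = S\), the union bound yields
\[
  \mathrm{wt}(H) = \sum_{K \in H} \mathrm{wt}(K) \geq \sum_{K \in H} |\lf(K)| \geq |S|.
\]

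For the equality case, \(\mathrm{wt}(H) = |S|\) forces both inequalities to be equalities. The second equality says the sets \(\lf(K)\) are pairwise disjoint. The first equality says each \(K\) is a higher partition of \(\lf(K)\).

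One point needs care here: distinct \(K \neq K'\) in \(H\) could a priori have the same leaf set. If that leaf set \(B\) were nonempty, the sum \(\sum |\lf(K)|\) would count \(B\) twice and the second inequality would be strict. Leaf sets are always nonempty, because all sets involved are nonempty. So under equality the map \(K \mapsto \lf(K)\) is injective, and \(\pi = \set{\lf(K)}_{K \in H}\) is a partition of \(S\) with \(H = \set{H_B}_{B \in \pi}\) and \(H_B \in \Part_{r-1}(B)\). Hence \(H \in \Part_r(S)\).

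Conversely, suppose \(H = \set{H_B}_{B \in \pi} \in \Part_r(S)\). A quick sub-induction shows \(\lf(H_B) = B\), so each \(H_B\) covers \(B\) and \(H\) covers \(S\). By the induction hypothesis, \(\mathrm{wt}(H) = \sum_B |B| = |S|\).

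The main obstacle is this bookkeeping: \(H\) is a set, so the argument must rule out collisions of blocks and check that \(\Part_r(S) \subseteq \Cov_r(S)\). Beyond that, the argument is the routine union bound.
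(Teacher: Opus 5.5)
Your proof is correct and follows the paper's route: induction on $r$ via $\mathrm{wt}(H) = \sum_{K \in H} \mathrm{wt}(K) \geq \sum_{K \in H} |\lf(K)| \geq |S|$, with equality exactly when each $K$ is a higher partition of its leaf support and these supports partition $S$. The only differences are that you fold the paper's separate $r = 2$ case into the general step, and you make explicit two points the paper leaves implicit: injectivity of $K \mapsto \lf(K)$ in the equality case, and $\Part_r(S) \subseteq \Cov_r(S)$ for the converse.
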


\begin{proof}

For \(r = 1\): \(\Cov_1(S) = \Part_1(S) = \set{S}\) and
\(\mathrm{wt}(S) = |S|\). For \(r = 2\):
\(\mathrm{wt}(H) = \sum_{T \in H} |T| \geq |\bigcup H| = |S|\), with
equality iff the blocks are pairwise disjoint. For \(r \geq 3\): by
induction,
\(\mathrm{wt}(H) = \sum_{K \in H} \mathrm{wt}(K) \geq \sum_{K \in H} |\lf(K)| \geq |S|\),
with equality iff each \(K \in \Part_{r-1}(\lf(K))\) and the leaf
supports partition \(S\).

\end{proof}

\begin{definition}[Iterated increments]

\label{def:iterated-incr} Let
\(X_0 \xrightarrow{f_1} X_1 \xrightarrow{f_2} \cdots \xrightarrow{f_m} X_m\)
be maps between abelian groups, \(x \in X_0\), \(u: S \to X_0\) a
direction family, and \(x_r = (f_r \circ \cdots \circ f_1)(x)\).

\begin{enumerate}
\def\labelenumi{\arabic{enumi})}
\tightlist
\item
  \emph{Boolean iterated increments.} For \(r = 1\) and
  \(T \in \KP_+(S)\), set
  \(\Delta^{T}(f_1;\, x;\, u) := \Delta(f_1;\, x;\, u_T)\). For
  \(r \geq 2\) and \(K \in \KP_+^{r}(S)\), define recursively:
\end{enumerate}

\[
  \Delta^{K}(f_1, \dots, f_r;\, x;\, u)
  :=
  \Delta(f_r;\, x_{r-1};\,
  (\Delta^{L}(f_1, \dots, f_{r-1};\, x;\, u))_{L \in K}).
\]

\begin{enumerate}
\def\labelenumi{\arabic{enumi})}
\setcounter{enumi}{1}
\tightlist
\item
  \emph{Binomial iterated increments.} For \(r = 1\) and
  \(\alpha \in \KM_+(S)\), set
  \(\Delta^{\alpha}(f_1;\, x;\, u) := \Delta(f_1;\, x;\, u^{\alpha})\)
  \ref{def:forward-diff}. For \(r \geq 2\) and
  \(\kappa \in \KM_+^{r}(S)\), define recursively:
\end{enumerate}

\[
  \Delta^{\kappa}(f_1, \dots, f_r;\, x;\, u)
  :=
  \Delta(f_r;\, x_{r-1};\,
  (\Delta^{\lambda}(f_1, \dots, f_{r-1};\, x;\, u)^{\times \kappa(\lambda)})_{\lambda \in \supp(\kappa)}),
\]

where for each \(\lambda\) with \(\kappa(\lambda) > 0\), the direction
\(\Delta^{\lambda}(\dots)\) appears \(\kappa(\lambda)\) times. The
Boolean case is recovered by restriction to
\(\KP_+^r(S) \subset \KM_+^r(S)\).

\end{definition}

\begin{lemma}[Profile invariance]

\label{lem:profile-invariance} Let \(\gamma \in \IN_0^S\), let
\(u: S \to X_0\) be a direction family, and equip \(S(\gamma)\) with the
slot directions \(u \circ \pi\). Then for every
\(K \in \KP_+^m(S(\gamma))\),

\[
  \Delta^K(f_1, \dots, f_m;\, x;\, u \circ \pi) = \Delta^{\nu(K)}(f_1, \dots, f_m;\, x;\, u).
\]

\end{lemma}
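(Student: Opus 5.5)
The argument is an induction on \(m\), following the recursive definitions of \(\Delta^K\), \(\Delta^\kappa\) and the profile map \(\nu\). Two facts are used throughout. First, \(\Delta(f; y; w_1, \dots, w_n)\) is invariant under permutations of the direction list \ref{def:forward-diff}. Second, a direction list whose entries are grouped by equal value is exactly a multi-index repetition \(w_\bullet^\alpha\). By definition, slot \((s,i) \in S(\gamma)\) carries the direction \(u(s)\).

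\emph{Base case \(m = 1\).} Let \(T \in \KP_+(S(\gamma))\). The list \((u \circ \pi)_T\) contains \(u(s)\) exactly \(|T \cap \pi^{-1}(s)| = \nu(T)_s\) times. After reordering, it is therefore the list \(u_\bullet^{\nu(T)}\). Permutation invariance gives \(\Delta(f_1; x; (u\circ\pi)_T) = \Delta(f_1; x; u^{\nu(T)})\), and the right-hand side is \(\Delta^{\nu(T)}(f_1; x; u)\) by \ref{def:iterated-incr}.

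\emph{Inductive step.} Let \(m \geq 2\) and \(K \in \KP_+^m(S(\gamma))\). By definition,
\[
  \Delta^K(f_1,\dots,f_m; x; u\circ\pi) = \Delta\bigl(f_m; x_{m-1}; (\Delta^L(f_1,\dots,f_{m-1}; x; u\circ\pi))_{L \in K}\bigr).
\]
The induction hypothesis, applied at level \(m-1\) to each \(L \in K \subseteq \KP_+^{m-1}(S(\gamma))\), replaces each direction by \(\Delta^{\nu(L)}(f_1,\dots,f_{m-1}; x; u)\). Now group the elements \(L \in K\) according to the value \(\lambda = \nu(L)\). The direction \(\Delta^\lambda(\dots)\) then occurs exactly \(\#\set{L \in K : \nu(L) = \lambda} = \nu(K)(\lambda)\) times, and \(\lambda\) ranges over \(\supp(\nu(K))\). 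By permutation invariance the list is \((\Delta^\lambda(\dots)^{\times \nu(K)(\lambda)})_{\lambda \in \supp \nu(K)}\). This is precisely the list that defines \(\Delta^{\nu(K)}(f_1,\dots,f_m; x; u)\) in \ref{def:iterated-incr}, which completes the induction.

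\emph{Main point of care.} The sets in the statement are genuine sets, so distinct \(L, L'\) may have the same profile. This causes no problem: \(\Delta\) accepts repeated directions, and the multiplicity recorded in \(\nu(K)\) counts exactly these repeats. One only has to check that the recursion really runs on the same index set, namely \(S(\gamma)\) with slot directions \(u \circ \pi\) at every level. This holds because the inner increments in \ref{def:iterated-incr} are all formed with the same basepoint \(x\) and the same direction family. The basepoint \(x_{m-1}\) depends only on \(x\), so it is unaffected by the change from \(u\circ\pi\) to \(u\).
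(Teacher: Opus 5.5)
Your proof is correct and follows essentially the same route as the paper: induction on \(m\), with the base case reduced to permutation invariance of \(\Delta\), and the inductive step applying the hypothesis to each \(L \in K\) and observing that the direction \(\Delta^\lambda\) occurs exactly \(\nu(K)(\lambda)\) times, which is the defining list for \(\Delta^{\nu(K)}\). Your added remark that distinct \(L, L' \in K\) with equal profile simply produce repeated directions is a helpful explicit check that the paper leaves implicit.
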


\begin{proof}

For \(m = 1\) and \(T \subseteq S(\gamma)\), the direction list
\((u_{\pi(t)})_{t \in T}\) is a permutation of \(u^{\nu(T)}\), so the
claim is permutation invariance of \(\Delta\) \ref{def:forward-diff}.
For \(m \geq 2\), apply the induction hypothesis to each \(L \in K\):
\(\Delta^L = \Delta^{\nu(L)}\). In the outer difference,
\(\kappa(\lambda)\) of the directions equal \(\Delta^\lambda\), which is
the defining expression for \(\Delta^{\nu(K)}\).

\end{proof}

\begin{theorem}[Iterated Faà di Bruno]

\label{thm:iterated-dfdb} Let
\(X_0 \xrightarrow{f_1} X_1 \xrightarrow{f_2} \cdots \xrightarrow{f_m} X_m\)
be arbitrary maps between abelian groups, \(x \in X_0\), and
\(z = (f_m \circ \cdots \circ f_1)(x)\). Let \(S\) be a finite set,
\(u: S \to X_0\) a family of directions, and \(\gamma \in \IN_0^S\);
write \(u_S = (u_s)_{s \in S}\), and \(u^\gamma\) for the family with
each \(u_s\) repeated \(\gamma_s\) times. For \(S = [k]\) this recovers
the tuple notation \(u_\bullet = (u_1, \dots, u_k)\).

\begin{enumerate}
\def\labelenumi{\arabic{enumi})}
\tightlist
\item
  \emph{Boolean Faà di Bruno} (\(S \neq \emptyset\))\emph{:}
\end{enumerate}

\[
  \Delta(f_m \circ \cdots \circ f_1;\, x;\, u_S)
  =
  \sum_{K \in \Cov_m(S)}
  \Delta^{K}(f_1, \dots, f_m;\, x;\, u).
\]

\begin{enumerate}
\def\labelenumi{\arabic{enumi})}
\setcounter{enumi}{1}
\tightlist
\item
  \emph{Binomial Faà di Bruno:}
\end{enumerate}

\[
  \Delta(f_m \circ \cdots \circ f_1; x; u^\gamma) = \sum_{\kappa \in \KM_+^m(S)} \Cov_m(\gamma, \kappa)\, \Delta^{\kappa}(f_1, \dots, f_m; x; u),
\]

where
\(\Cov_m(\gamma, \kappa) = \#\set{K \in \Cov_m(S(\gamma)) : \nu(K) = \kappa}\)
counts \(m\)-fold coverings of \(S(\gamma)\) with profile \(\kappa\).

\begin{enumerate}
\def\labelenumi{\arabic{enumi})}
\setcounter{enumi}{2}
\tightlist
\item
  \emph{Boolean Taylor composition:}
\end{enumerate}

\[
  (f_m \circ \cdots \circ f_1)(x + {\textstyle\sum_{s \in S}} u_s) = z + \sum_{K \in \KP_+^m(S)} \Delta^{K}(f_1, \dots, f_m;\, x;\, u).
\]

\begin{enumerate}
\def\labelenumi{\arabic{enumi})}
\setcounter{enumi}{3}
\tightlist
\item
  \emph{Binomial Taylor composition:}
\end{enumerate}

\[
  (f_m \circ \cdots \circ f_1)(x + {\textstyle\sum_s} \gamma_s u_s) = z + \sum_{\kappa \in \KM_+^m(S)} \mathrm{Pow}_m(\gamma, \kappa)\, \Delta^{\kappa}(f_1, \dots, f_m; x; u),
\]

where
\(\mathrm{Pow}_m(\gamma, \kappa) = \#\set{K \in \KP_+^m(S(\gamma)) : \nu(K) = \kappa}\)
counts \(m\)-fold iterated subsets of \(S(\gamma)\) with profile
\(\kappa\).

\end{theorem}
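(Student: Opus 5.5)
I would prove the four parts in the order 3), 1), 4), 2), by induction on \(m\), copying the proof of \ref{thm:dfdb}: first build the iterated Taylor composition on the zeta side, then Möbius-invert, then pass to multi-indices via slot sets.

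\textbf{Part 3 (Boolean Taylor composition).} For \(m=1\) this is Boolean Taylor duality \ref{prop:taylor-duality}, since \(\KP_+^1(S)=\KP_+(S)\) and \(\Delta^T(f_1;x;u)=\Delta(f_1;x;u_T)\). For the inductive step, set \(F=f_{m-1}\circ\cdots\circ f_1\) and \(d_L := \Delta^L(f_1,\dots,f_{m-1};x;u)\) for \(L\in\KP_+^{m-1}(S)\). The induction hypothesis, applied to every subset \(R\subseteq S\), reads \(F(x+\sum_{s\in R}u_s)=x_{m-1}+\sum_{L\in\KP_+^{m-1}(R)} d_L\). Now apply Boolean Taylor duality to \(f_m\) at \(x_{m-1}\), with the finite index set \(\KP_+^{m-1}(S)\) as direction set and directions \(d_L\). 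This gives
\[
f_m\Bigl(x_{m-1}+\sum_{L\in\KP_+^{m-1}(S)} d_L\Bigr)=x_m+\sum_{K\in\KP_+^m(S)}\Delta(f_m;x_{m-1};(d_L)_{L\in K}),
\]
and the summands on the right are exactly \(\Delta^K\) by \ref{def:iterated-incr}. Note that the directions of \(f_m\) are indexed by a set, not a tuple, so permutation invariance of \(\Delta\) is used implicitly.

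\textbf{Part 1 (Boolean Faà di Bruno).} Define \(\varphi(R):=\sum_{K\in\Cov_m(R)}\Delta^K\) for nonempty \(R\), and \(\varphi(\emptyset)=0\). The sets \(\KP_+^m(R)\) for \(R\subseteq S\) are nested, with \(\KP_+^m(R)\subseteq\KP_+^m(S)\). Each \(K\in\KP_+^m(S)\) lies in \(\Cov_m(R)\) for exactly one \(R\), namely \(R=\lf(K)\). Moreover \(\Delta^K\) depends only on \(K\) and not on the ambient set, because it only uses the directions \(u_s\) with \(s\in\lf(K)\). Hence \(\zeta(\varphi;R)=\sum_{K\in\KP_+^m(R)}\Delta^K\), which by part 3 equals \((f_m\circ\cdots\circ f_1)(x+\sum_{s\in R}u_s)-z\). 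Applying \(\mu\) from \ref{prop:moebius-inversion}, the left side gives \(\varphi(S)\). On the right, the constant \(z\) drops out for \(S\neq\emptyset\), and what remains is the alternating sum that defines \(\Delta(f_m\circ\cdots\circ f_1;x;u_S)\).

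\textbf{Parts 2 and 4 (binomial versions).} Apply parts 1 and 3 to the slot set \(S(\gamma)\) with slot directions \(u\circ\pi\). Then \(\sum_{t\in S(\gamma)}u_{\pi(t)}=\sum_s\gamma_su_s\), and the list \((u_{\pi(t)})_{t\in S(\gamma)}\) is a permutation of \(u^\gamma\). Profile invariance \ref{lem:profile-invariance} replaces each \(\Delta^K\) by \(\Delta^{\nu(K)}\). Grouping the terms by \(\kappa=\nu(K)\) yields the coefficients \(\Cov_m(\gamma,\kappa)\) and \(\mathrm{Pow}_m(\gamma,\kappa)\) exactly as they are defined. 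For part 4, \(\nu\) lands in \(\KM_+^m(S)\) because all blocks are nonempty at every level.

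The main obstacle is bookkeeping in the inductive step of part 3. The direction family for \(f_m\) is indexed by the set \(\KP_+^{m-1}(S)\), and the nesting of iterated power sets has to match the recursive definition of \(\Delta^K\) exactly. The subtle check is that the inner increments \(d_L\) computed relative to a subset \(R\) coincide with those computed relative to \(S\). This holds because the increments are defined intrinsically from \(u\) restricted to \(\lf(L)\), and it is the same independence that part 1 relies on.
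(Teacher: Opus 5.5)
Your proposal is correct and follows the paper's proof essentially step for step. Part 3 is proved by induction on \(m\), applying Boolean Taylor duality to \(f_m\) at \(x_{m-1}\) with directions indexed by \(\KP_+^{m-1}(S)\). Part 1 is its Möbius inverse over subsets \(R \subseteq S\), using that each \(K\) lies in \(\Cov_m(\lf(K))\). Parts 2 and 4 follow by passing to the slot set \(S(\gamma)\) and grouping via profile invariance.

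One edge case is ignored by both you and the paper: part 2 invokes part 1 on \(S(\gamma)\), so it needs \(\gamma \neq 0\). For \(\gamma = 0\) the left side is \(z\) while the right side is an empty sum.
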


\begin{proof}

\begin{enumerate}
\def\labelenumi{\arabic{enumi})}
\setcounter{enumi}{2}
\tightlist
\item
  By induction on \(m\). The cases \(m = 1\) and \(m = 2\) are
  \ref{prop:taylor-duality} and \ref{thm:dfdb}. Write
  \(F = f_m \circ \cdots \circ f_1\),
  \(F' = f_{m-1} \circ \cdots \circ f_1\),
  \(x_r = (f_r \circ \cdots \circ f_1)(x)\), and
  \(d_K := \Delta^K(f_1, \dots, f_{m-1};\, x;\, u)\) for
  \(K \in \KP_+^{m-1}(S)\). The induction hypothesis gives
  \(F'(x + \sum_{s \in S} u_s) = x_{m-1} + \sum_{K \in \KP_+^{m-1}(S)} d_K\).
  Applying Taylor duality \ref{prop:taylor-duality} to \(f_m\) at
  \(x_{m-1}\) in directions \(d_K\):
\end{enumerate}

\[
  F(x + {\textstyle\sum_{s \in S}} u_s) = x_m + \sum_{H \in \KP_+(\KP_+^{m-1}(S))} \Delta(f_m;\, x_{m-1};\, (d_K)_{K \in H}).
\]

Since \(\KP_+(\KP_+^{m-1}(S)) = \KP_+^m(S)\) and
\(\Delta(f_m;\, x_{m-1};\, (d_K)_{K \in H}) = \Delta^H(f_1, \dots, f_m;\, x;\, u)\)
by \ref{def:iterated-incr}, this completes the induction.

\begin{enumerate}
\def\labelenumi{\arabic{enumi})}
\tightlist
\item
  Define \(\varphi(R) := \sum_{H \in \Cov_m(R)} \Delta^H\) for
  \(R \subseteq S\). By Möbius inversion, it suffices to show
  \(\zeta(\varphi; R) = T(F; x; u_R) - z\). Indeed,
  \(\zeta(\varphi; R) = \sum_{Q \subseteq R} \sum_{H \in \Cov_m(Q)} \Delta^H = \sum_{H \in \KP_+^m(R)} \Delta^H\),
  since each \(H \in \KP_+^m(R)\) covers exactly \(Q = \lf(H)\). Now
  apply part 3.
\end{enumerate}

2,4) Apply parts 1,3 to \(S(\gamma)\). By profile invariance
\ref{lem:profile-invariance}, \(\Delta^{K}\) depends only on the profile
\(\kappa = \nu(K)\), so grouping gives
\(\sum_\kappa \Cov_m(\gamma, \kappa)\, \Delta^{\kappa}\) for the FdB
side and
\(\sum_\kappa \mathrm{Pow}_m(\gamma, \kappa)\, \Delta^{\kappa}\) for the
Taylor side.

\end{proof}

\begin{proposition}[Newton coefficient recursion]

\label{prop:newton-recursion} For \(\alpha \in \KM_+(S)\) and
\(\kappa \in \KM_+^m(S)\) with \(m \geq 2\):

\[
  \mathrm{Pow}_1(\gamma, \alpha) = \frac{(\gamma)_\alpha}{\alpha!}, \qquad \mathrm{Pow}_2(\gamma, \kappa) = \frac{1}{\kappa!} \prod_{\alpha \in \supp(\kappa)} \left(\frac{(\gamma)_\alpha}{\alpha!}\right)_{\kappa(\alpha)},
\]

\[
  \mathrm{Pow}_m(\gamma, \kappa) = \frac{1}{\kappa!} \prod_{\lambda \in \supp(\kappa)} (\mathrm{Pow}_{m-1}(\gamma, \lambda))_{\kappa(\lambda)}.
\]

\end{proposition}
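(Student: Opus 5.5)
The plan is to prove all three formulas by directly counting iterated subsets $K \in \KP_+^m(S(\gamma))$ with prescribed profile $\nu(K)=\kappa$. I will build such $K$ recursively, one level at a time, following the definition of $\nu$ in \ref{def:higher-multi-indices}.

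\emph{Base case $m=1$.} Here $K = T \subseteq S(\gamma)$ is a nonempty subset with $\nu(T)=\alpha$. Choosing $T$ amounts to choosing, independently for each $s\in S$, an $\alpha_s$-element subset of the fiber $\pi^{-1}(s)$, which has $\gamma_s$ elements. This gives $\prod_s \binom{\gamma_s}{\alpha_s} = (\gamma)_\alpha/\alpha!$ choices. The same count was already used in the proof of \ref{prop:multi-index-moebius}.

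\emph{Inductive step, $m\ge 2$.} An element $K\in\KP_+^m(S(\gamma)) = \KP_+(\KP_+^{m-1}(S(\gamma)))$ is a nonempty finite set of elements $L\in\KP_+^{m-1}(S(\gamma))$. By definition, $\nu(K)=\kappa$ means that for every $\lambda\in\KM_+^{m-1}(S)$ exactly $\kappa(\lambda)$ members $L\in K$ satisfy $\nu(L)=\lambda$. The fibers $\nu^{-1}(\lambda)\subseteq \KP_+^{m-1}(S(\gamma))$ are pairwise disjoint. Hence $K$ decomposes uniquely as a disjoint union over $\lambda\in\supp(\kappa)$ of subsets $K_\lambda \subseteq \nu^{-1}(\lambda)$ with $|K_\lambda| = \kappa(\lambda)$. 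Conversely, any such choice of subsets assembles to a valid $K$, which is nonempty because $\kappa\neq 0$. So the count factors as
\[
\mathrm{Pow}_m(\gamma,\kappa) = \prod_{\lambda\in\supp(\kappa)} \binom{|\nu^{-1}(\lambda)|}{\kappa(\lambda)},
\]
where by definition $|\nu^{-1}(\lambda)| = \mathrm{Pow}_{m-1}(\gamma,\lambda)$. Rewriting $\binom{N}{r} = (N)_r/r!$ and using $\kappa! = \prod_\lambda \kappa(\lambda)!$ for $\kappa\in\KM_+(\KM_+^{m-1}(S))$ yields the general recursion. Setting $m=2$ and inserting the base case gives the displayed formula for $\mathrm{Pow}_2$.

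\emph{Main obstacle.} The only delicate point is bookkeeping. I must check that the higher profile map is really defined levelwise, so that $\nu(K)$ depends only on the multiset $\{\nu(L)\}_{L\in K}$. I must also note that $K$ is a set, not a multiset, so distinct members with equal profile are distinct elements of $\nu^{-1}(\lambda)$. This is exactly why falling factorials $(N)_r/r!$ appear rather than powers. I will also remark that if $\kappa(\lambda) > \mathrm{Pow}_{m-1}(\gamma,\lambda)$, for instance when $\lf(\lambda)\not\le\gamma$, the falling factorial vanishes, consistent with the count being zero. Finally, I note that $\kappa!$ and the products run over the finite support of $\kappa$, so all expressions are finite.
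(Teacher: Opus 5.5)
Your proposal is correct and is essentially the paper's argument: count subsets fiberwise for $m=1$, and for $m \geq 2$ choose, for each $\lambda \in \supp(\kappa)$, a $\kappa(\lambda)$-element subset of the $\mathrm{Pow}_{m-1}(\gamma,\lambda)$ available members of profile $\lambda$, which gives the factors $(N)_r/r!$. Your explicit decomposition $K = \bigsqcup_\lambda K_\lambda$ spells out the bijection that the paper leaves implicit.

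One caution concerns your closing side remark, which is false beyond level one. The condition $\lf(\lambda) \not\le \gamma$ does not force $\mathrm{Pow}_{m-1}(\gamma,\lambda) = 0$ once $\lambda$ lives at level $\geq 2$, because iterated subsets may reuse slots. For example, take $S = \set{s}$, $\gamma = 2$, $S(\gamma) = \set{a,b}$, and let $\lambda$ have one member of profile $1$ and one of profile $2$. Then $\lf(\lambda) = 3 > 2$, yet $\set{\set{a},\set{a,b}}$ and $\set{\set{b},\set{a,b}}$ realize $\lambda$, so $\mathrm{Pow}_2(2,\lambda) = 2$. The remark is not needed for the proof, so you can simply drop it.
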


\begin{proof}

For \(m = 1\):
\(\mathrm{Pow}_1(\gamma, \alpha) = \#\set{T \subseteq S(\gamma) : \nu(T) = \alpha} = \frac{(\gamma)_\alpha}{\alpha!}\),
counting subsets with profile \(\alpha\).

For \(m \geq 2\): an element \(K \in \KP_+^m(S(\gamma))\) is a nonempty
set of elements of \(\KP_+^{m-1}(S(\gamma))\). The profile
\(\kappa \in \KM_+^m(S)\) records
\(\kappa(\lambda) = \#\set{\text{elements of } K \text{ with profile } \lambda}\).
Since the elements are chosen as a set (no repeats) from
\(\mathrm{Pow}_{m-1}(\gamma, \lambda)\) available elements with profile
\(\lambda\), the count is
\(\prod_\lambda (\mathrm{Pow}_{m-1}(\gamma, \lambda))_{\kappa(\lambda)} / \kappa(\lambda)!\).

\end{proof}

\begin{proposition}[Covering coefficient recursion]

\label{prop:covering-recursion}

\[
  \Cov_1(\gamma, \alpha) = \delta_{\gamma, \alpha}, \qquad \Cov_2(\gamma, \kappa) = \sum_{\beta \leq \gamma} (-1)^{\mathrm{wt}(\gamma - \beta)} \frac{(\gamma)_\beta}{\beta!} \frac{1}{\kappa!} \prod_{\alpha \in \supp(\kappa)} \left(\frac{(\beta)_\alpha}{\alpha!}\right)_{\kappa(\alpha)},
\]

and generally, for \(m \geq 2\):

\begin{enumerate}
\def\labelenumi{\arabic{enumi})}
\tightlist
\item
  \emph{Cross recursion} (\(\Cov_m\) from \(\mathrm{Pow}_m\)):
\end{enumerate}

\[
  \Cov_m(\gamma, \kappa) = \sum_{\beta \leq \gamma} (-1)^{\mathrm{wt}(\gamma - \beta)} \frac{(\gamma)_\beta}{\beta!}\, \mathrm{Pow}_m(\beta, \kappa).
\]

\begin{enumerate}
\def\labelenumi{\arabic{enumi})}
\setcounter{enumi}{1}
\tightlist
\item
  \emph{Level recursion} (\(\Cov_m\) from \(\Cov_{m-1}\)):
\end{enumerate}

\[
  \Cov_m(\gamma, \kappa) = \sum_{\beta \leq \gamma} (-1)^{\mathrm{wt}(\gamma - \beta)} \frac{(\gamma)_\beta}{\beta!} \frac{1}{\kappa!} \prod_{\lambda \in \supp(\kappa)} \left(\sum_{\alpha \leq \beta} \frac{(\beta)_\alpha}{\alpha!}\, \Cov_{m-1}(\alpha, \lambda)\right)_{\!\kappa(\lambda)}.
\]

\end{proposition}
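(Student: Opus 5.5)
The plan is to prove the cross recursion first, by a purely combinatorial Möbius inversion on the slot set. We then obtain the level recursion and the stated base cases by substituting the Newton coefficient recursion \ref{prop:newton-recursion} and a zeta-type identity relating \(\mathrm{Pow}_{m-1}\) to \(\Cov_{m-1}\).

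\emph{Cross recursion.} Fix \(\kappa \in \KM_+^m(S)\) and \(\gamma\). On the Boolean lattice \(\KP(S(\gamma))\), consider the two cubes
\[
a(R) = \#\set{K \in \KP_+^m(R) : \nu(K) = \kappa}, \qquad c(R) = \#\set{K \in \Cov_m(R) : \nu(K) = \kappa}.
\]
Here profiles are taken via the restricted projection \(R \to S\). Every \(K \in \KP_+^m(R)\) covers exactly \(Q = \lf(K) \subseteq R\), as in the proof of \ref{thm:iterated-dfdb}. Hence \(a = \zeta(c)\), and Boolean Möbius inversion \ref{prop:moebius-inversion} gives
\[
c(S(\gamma)) = \sum_{R \subseteq S(\gamma)} (-1)^{|S(\gamma)|-|R|}\, a(R).
\]
The key observation is that \(a(R)\) depends only on \(\beta = \nu(R)\). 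A bijection \(R \cong S(\beta)\) compatible with projections to \(S\) transports iterated subsets and preserves the higher profile map. Therefore \(a(R) = \mathrm{Pow}_m(\beta, \kappa)\), and similarly \(c(S(\gamma)) = \Cov_m(\gamma, \kappa)\). There are \((\gamma)_\beta/\beta!\) subsets \(R\) with profile \(\beta\), exactly as in the proof of \ref{prop:multi-index-moebius}. Grouping by \(\beta\) yields the cross recursion. Equivalently, it is binomial Möbius inversion applied to \(\beta \mapsto \mathrm{Pow}_m(\beta,\kappa)\) once we know \(\mathrm{Pow}_m(\cdot,\kappa) = \zeta(\Cov_m(\cdot,\kappa))\).

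\emph{Base cases.} For \(m = 1\), the only element of \(\Cov_1(S(\gamma))\) is \(S(\gamma)\) itself, with profile \(\gamma\), so \(\Cov_1(\gamma,\alpha) = \delta_{\gamma,\alpha}\). The \(m = 2\) formula follows by inserting the closed form of \(\mathrm{Pow}_2(\beta,\kappa)\) from \ref{prop:newton-recursion} into the cross recursion.

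\emph{Level recursion.} Insert \(\mathrm{Pow}_m(\beta,\kappa) = \frac{1}{\kappa!}\prod_\lambda (\mathrm{Pow}_{m-1}(\beta,\lambda))_{\kappa(\lambda)}\) into the cross recursion. It then remains to show
\[
\mathrm{Pow}_{m-1}(\beta,\lambda) = \sum_{\alpha \leq \beta} \frac{(\beta)_\alpha}{\alpha!}\, \Cov_{m-1}(\alpha,\lambda).
\]
This is the zeta direction of the same argument at level \(m-1\), now for \(m - 1 \geq 1\). Each \(L \in \KP_+^{m-1}(S(\beta))\) with profile \(\lambda\) is counted exactly once, under its leaf support \(Q = \lf(L)\). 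Grouping the subsets \(Q\) by profile \(\alpha = \nu(Q)\) gives the factor \((\beta)_\alpha/\alpha!\), and the remaining count is \(\Cov_{m-1}(\alpha,\lambda)\) by transport of structure. For \(m-1 = 1\) the identity reduces to \(\mathrm{Pow}_1 = (\beta)_\lambda/\lambda!\), which is consistent with \(\Cov_1 = \delta\).

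The main obstacle is the transport-of-structure step. We must check carefully that the higher profile map \(\nu\) on \(\KP_+^m(R)\), computed via \(R \subseteq S(\gamma) \to S\), agrees with the one on \(\KP_+^m(S(\beta))\) under a fiber-preserving bijection \(R \cong S(\beta)\). We will do this by induction on \(m\), following the recursive definition of \(\nu\). The base case \(m=1\) holds because the bijection preserves fiber measures, and the inductive step holds because \(\nu(K)\) only counts elements by their lower profiles.
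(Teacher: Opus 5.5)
Your proposal is correct and follows the paper's route. You obtain the zeta identity $\mathrm{Pow}_m(\gamma,\kappa)=\sum_{\beta\le\gamma}\frac{(\gamma)_\beta}{\beta!}\Cov_m(\beta,\kappa)$ by grouping iterated subsets by their leaf support, then invert it (binomial Möbius inversion, or equivalently Boolean inversion followed by grouping) to get the cross recursion, and finally substitute the Newton recursion and the level-$(m-1)$ zeta identity to get the level recursion. Beyond the paper, you spell out the base cases and the transport-of-structure step (that counts over $R \subseteq S(\gamma)$ depend only on $\nu(R)$), both of which the paper leaves implicit.
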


\begin{proof}

\begin{enumerate}
\def\labelenumi{\arabic{enumi})}
\item
  An element \(K \in \KP_+^m(S(\gamma))\) with profile \(\kappa\) has
  leaf support \(\lf(K) \subseteq S(\gamma)\). Each subset
  \(R \subseteq S(\gamma)\) with \(\nu(R) = \beta\) supports
  \(\Cov_m(\beta, \kappa)\) coverings with profile \(\kappa\), and there
  are \(\frac{(\gamma)_\beta}{\beta!}\) such subsets. Hence
  \(\mathrm{Pow}_m(\gamma, \kappa) = \sum_{\beta \leq \gamma} \frac{(\gamma)_\beta}{\beta!}\, \Cov_m(\beta, \kappa)\).
  Binomial Möbius inversion \ref{prop:multi-index-moebius} gives
  \(\Cov_m(\gamma, \kappa) = \sum_{\beta \leq \gamma} (-1)^{\mathrm{wt}(\gamma-\beta)} \frac{(\gamma)_\beta}{\beta!}\, \mathrm{Pow}_m(\beta, \kappa)\).
\item
  Substitute the Newton recursion \ref{prop:newton-recursion}
  \(\mathrm{Pow}_m(\beta, \kappa) = \frac{1}{\kappa!} \prod_\lambda (\mathrm{Pow}_{m-1}(\beta, \lambda))_{\kappa(\lambda)}\)
  into 1), then replace
  \(\mathrm{Pow}_{m-1}(\beta, \lambda) = \sum_{\alpha \leq \beta} \frac{(\beta)_\alpha}{\alpha!}\, \Cov_{m-1}(\alpha, \lambda)\),
  which is the zeta inverse of 1) at level \(m-1\).
\end{enumerate}

\end{proof}

\section{Polynomial Möbius
calculus}\label{polynomial-muxf6bius-calculus}

This section passes from the discrete setting to the differential
setting on polynomial rings. The discrete formulas of the previous
section are algebra over the Boolean cube algebra \(B_k\); the
differential formulas are algebra over the infinitesimal Taylor algebra
\(A_k\). A flat deformation family \(C_k\) interpolates between them.
The key objects (Taylor--Möbius duality, Faà di Bruno by composition,
and the Constantine--Savits coefficients) all transfer from \(B_k\) to
\(A_k\) with \(\Delta\) replaced by \(D\) and coverings replaced by
partitions.

Throughout this section, let \(\ik\) be a commutative ring. The
square-free results (\ref{lem:flatness} at \(\nu = \mathbf{1}\),
\ref{prop:deformation-taylor}, \ref{thm:cubical-fdb},
\ref{thm:fdb-deformation}) hold over any \(\ik\); the binomial
refinements require \(\IQ \subseteq \ik\) for the factorial
denominators.

\subsection{Taylor algebras}\label{taylor-algebras}

\begin{definition}[Taylor algebras]

\label{def:taylor-algebras} For \(\nu \in \IN_0^k\), define:

\[
  B_k^\nu := \ik[\delta_1, \dots, \delta_k] / (\delta_i)_{\nu_i+1}, \qquad A_k^\nu := \ik[\eps_1, \dots, \eps_k] / (\eps_i^{\nu_i+1}).
\]

Here
\((\delta_i)_{\nu_i+1} = \delta_i(\delta_i - 1) \cdots (\delta_i - \nu_i)\)
is the falling factorial, and relations carrying a free index \(i\)
generate one relation for each \(1 \leq i \leq k\). We call \(B_k^\nu\)
the \emph{grid algebra} (for \(\IQ \subseteq \ik\), the function algebra
on the finite grid \(\prod_i \set{0, \dots, \nu_i}\); for
\(\nu = \mathbf{1}\) this holds over any \(\ik\)) and \(A_k^\nu\) the
\emph{Taylor algebra} (its infinitesimal version).

\end{definition}

\begin{example}[$\nu = \mathbf{1}$]

\label{ex:cube-algebras} For \(\nu = (1, \dots, 1)\), the relations
simplify to \(\delta_i^2 = \delta_i\) and \(\eps_i^2 = 0\). Write
\(B_k = B_k^{\mathbf{1}}\) and \(A_k = A_k^{\mathbf{1}}\). Both are free
of rank \(2^k\) with basis \(\delta^S := \prod_{i \in S} \delta_i\)
resp. \(\eps^S := \prod_{i \in S} \eps_i\) for \(S \subseteq [k]\). The
multiplication rules are:

\begin{itemize}
\tightlist
\item
  \(\delta^S \delta^T = \delta^{S \cup T}\) in \(B_k\) (idempotent:
  overlaps absorbed).
\item
  \(\eps^S \eps^T = \eps^{S \cup T}\) if \(S \cap T = \emptyset\), and
  \(\eps^S \eps^T = 0\) otherwise, in \(A_k\) (nilpotent: overlapping
  products vanish).
\end{itemize}

A product
\(\delta^{T_1} \cdots \delta^{T_p} = \delta^{T_1 \cup \cdots \cup T_p}\)
contributes to the top face \(\delta^{[k]}\) exactly when
\(T_1 \cup \cdots \cup T_p = [k]\), i.e.~a covering. A product
\(\eps^{T_1} \cdots \eps^{T_p}\) is nonzero only when the \(T_i\) are
pairwise disjoint, i.e.~a partition.

\end{example}

\subsection{Taylor--Möbius duality}\label{taylormuxf6bius-duality}

\begin{proposition}[Boolean Taylor--Möbius duality]

\label{prop:poly-taylor-duality} Let \(p \in \ik[y_1, \dots, y_d]\),
\(x \in \ik^d\), and \(v_1, \dots, v_k \in \ik^d\). In \(B_k\):

\[
  p(x + {\textstyle\sum_i} \delta_i v_i) = \sum_{S \subseteq [k]} \Delta(p; x; v_S)\, \delta^S, \qquad \Delta(p; x; v_S)\, \delta^S = \sum_{T \subseteq S} (-1)^{|S|-|T|}\, p(x + {\textstyle\sum_{i \in T}} \delta_i v_i).
\]

In \(A_k\):

\[
  p(x + {\textstyle\sum_i} \eps_i v_i) = \sum_{S \subseteq [k]} D(p; x; v_S)\, \eps^S, \qquad \eps^S\, D(p; x; v_S) = \sum_{T \subseteq S} (-1)^{|S|-|T|}\, p(x + {\textstyle\sum_{i \in T}} \eps_i v_i).
\]

Here \(D(p; x; v_S)\) denotes the \(\eps^S\)-coefficient of
\(p(x + \sum_i v_i \eps_i)\) in \(A_k\); this agrees with the iterated
directional derivative when \(\ik \supseteq \IQ\).

\end{proposition}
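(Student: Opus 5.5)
The plan is to prove both halves by the same three steps: a substitution principle for polynomials, an explicit expansion in the monomial basis \(\delta^S\) (resp. \(\eps^S\)), and Boolean Möbius inversion \ref{prop:moebius-inversion} applied coefficientwise.

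\emph{Substitution principle.} The basic tool is a \(\ik\)-algebra homomorphism. For \(p \in \ik[y_1,\dots,y_d]\), any commutative \(\ik\)-algebra \(R\), and any \(r \in R^d\), the value \(p(r)\) is well defined. Evaluation commutes with \(\ik\)-algebra homomorphisms \(\phi: R \to R'\), in the sense that \(\phi(p(r)) = p(\phi(r))\).

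\emph{Case \(B_k\).} For each \(T \subseteq [k]\), consider the evaluation homomorphism \(\mathrm{ev}_T: B_k \to \ik\) given by \(\delta_i \mapsto 1_{[i\in T]}\). It is well defined because \(\delta_i^2-\delta_i\) maps to \(0\). On the basis it acts by \(\mathrm{ev}_T(\delta^S) = [S \subseteq T]\).

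Now write \(p(x+\sum_i \delta_i v_i) = \sum_S c_S\, \delta^S\) with \(c_S \in \ik\); this is possible since \(B_k\) is free with basis \(\delta^S\) (Example \ref{ex:cube-algebras}). Applying \(\mathrm{ev}_T\) and the substitution principle gives
\[
p(x + {\textstyle\sum_{i\in T}} v_i) = \sum_{S \subseteq T} c_S,
\]
so the cube \(T \mapsto p(x+\sum_{i\in T} v_i)\) is the zeta transform of \(S \mapsto c_S\). By Proposition \ref{prop:moebius-inversion},
\[
c_S = \sum_{T\subseteq S}(-1)^{|S|-|T|}\, p(x+\textstyle\sum_{i\in T} v_i) = \Delta(p;x;v_S),
\]
which is the first identity.

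For the second identity, the evaluation approach does not apply directly, since its right-hand side is an element of \(B_k\) and involves \(\delta_i\)'s. Instead, apply the first identity with only the directions in \(T\) present, that is, with \(v_i\) replaced by \(0\) for \(i \notin T\). Since \(\Delta(p;x;v_U) = 0\) whenever some \(v_i = 0\) (pair terms \(U'\) and \(U' \cup \{i\}\)), this gives
\[
p(x+\textstyle\sum_{i\in T}\delta_i v_i) = \sum_{U\subseteq T}\Delta(p;x;v_U)\,\delta^U.
\]
Substituting into the alternating sum over \(T\subseteq S\) and exchanging the order of summation, the coefficient of \(\Delta(p;x;v_U)\,\delta^U\) is \(\sum_{U\subseteq T\subseteq S}(-1)^{|S|-|T|} = [U=S]\). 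This yields \(\Delta(p;x;v_S)\,\delta^S\).

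\emph{Case \(A_k\).} The evaluation maps are no longer available, but the second step is purely formal. Define \(D(p;x;v_S)\) as the coefficient in the basis \(\eps^S\), which is legitimate because \(A_k\) is free (Example \ref{ex:cube-algebras}); the first identity then holds by definition. To make the restriction argument work I still need that the \(\eps^U\)-coefficient vanishes when some \(v_i = 0\) with \(i\in U\). This holds because setting \(v_i=0\) means \(\eps_i\) does not appear in the argument, and the substitution principle shows the expansion lies in the subalgebra generated by the other \(\eps_j\). The same restriction-and-inclusion–exclusion computation as in the \(B_k\) case then gives the second identity.

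\emph{Derivative remark.} It remains to identify \(D(p;x;v_S)\) with the iterated directional derivative when \(\IQ\subseteq\ik\). By linearity it suffices to check monomials, or alternatively to Taylor-expand \(p(x+w)\) in \(w\) and note that \(\eps_i^2=0\) kills every multilinear term with a repeated \(\eps_i\). The coefficient of \(\eps^S\) is then \(\partial_{v_{i_1}}\cdots\partial_{v_{i_s}}p(x)\). The factorials \(1/j!\) from the Taylor expansion cancel against the \(j!\) orderings, and this cancellation is the only place where \(\IQ\) is needed.

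The main obstacle, though a mild one, is the second identity in each case. The right-hand side lives in the algebra rather than in \(\ik\), so evaluation at vertices alone does not prove it; it requires the vanishing of coefficients under zero directions, which is the point to check carefully.
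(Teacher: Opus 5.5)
Your proposal is correct and follows the paper's proof essentially step for step. In \(B_k\), evaluation at the vertices plus Boolean Möbius inversion gives the expansion. The Möbius identity then comes from substituting the restricted expansions \(p(x+\sum_{i\in T}\delta_i v_i)=\sum_{U\subseteq T}c_U\,\delta^U\) into the alternating sum and exchanging summation. The \(A_k\) case uses the same sieve, and you make explicit the restriction step that the paper uses silently.

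One small point in your \(A_k\) case is left implicit: for \(U\subseteq T\), the \(\eps^U\)-coefficient does not change when the \(v_j\) with \(j\notin T\) are set to zero. This follows at once from the algebra endomorphism \(\eps_j\mapsto 1_{j\in T}\,\eps_j\) (and its analogue \(\delta_j\mapsto 1_{j\in T}\,\delta_j\) on \(B_k\)). It sends \(\eps^U\mapsto 1_{U\subseteq T}\,\eps^U\), so it yields the restricted expansion directly, together with the vanishing you already prove.
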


\begin{proof}

In \(B_k\): expand
\(p(x + \sum_i \delta_i v_i) = \sum_S c_S\, \delta^S\) in the free
basis. The branch maps \(\rho_T: \delta_i \mapsto 1_{i \in T}\) give
\(p(x + \sum_{i \in T} v_i) = \sum_{S \subseteq T} c_S\); Boolean Möbius
inversion \ref{prop:moebius-inversion} gives
\(c_S = \Delta(p; x; v_S)\). The Möbius identity is the same inversion
applied inside \(B_k\): the right side is
\(\sum_{T \subseteq S} (-1)^{|S|-|T|}\, p(x + \sum_{i \in T} \delta_i v_i) = \sum_{T \subseteq S} (-1)^{|S|-|T|} \sum_{R \subseteq T} c_R\, \delta^R = c_S\, \delta^S\).
In \(A_k\): the \(\eps^S\)-coefficient is \(D(p; x; v_S)\) by the
multinomial theorem; the Möbius identity says the same alternating sieve
recovers differentials from infinitesimal grid evaluations.

\end{proof}

\begin{proposition}[Binomial Taylor--Möbius duality]

\label{prop:binomial-taylor-duality} Let \(p \in \ik[y_1, \dots, y_d]\),
\(x \in \ik^d\), and \(v_1, \dots, v_k \in \ik^d\). In \(B_k^\nu\):

\[
  p(x + {\textstyle\sum_i} \delta_i v_i) = \sum_{0 \leq \alpha \leq \nu} \frac{1}{\alpha!}\, \Delta^\alpha(p; x; v_\bullet)\, (\delta)_\alpha, \qquad \Delta^\alpha(p; x; v_\bullet) = \sum_{\beta \leq \alpha} (-1)^{\mathrm{wt}(\alpha-\beta)}\, \frac{(\alpha)_\beta}{\beta!}\, p(x + {\textstyle\sum_i} \beta_i v_i).
\]

In \(A_k^\nu\):

\[
  p(x + {\textstyle\sum_i} \eps_i v_i) = \sum_{0 \leq \alpha \leq \nu} \frac{1}{\alpha!}\, D^\alpha(p; x; v_\bullet)\, \eps^\alpha, \qquad \eps^\nu\, D^\nu(p; x; v_\bullet) = \sum_{\beta \leq \nu} (-1)^{\mathrm{wt}(\nu-\beta)}\, \frac{(\nu)_\beta}{\beta!}\, p(x + {\textstyle\sum_i} \beta_i \eps_i v_i).
\]

Here
\(D^\alpha(p; x; v_\bullet) := D(p; x; v_\bullet^{\times \alpha})\), the
square-free \(D\) applied to the slot realization
\ref{def:forward-diff}. By the multinomial theorem,
\(\alpha!\, [\eps^\alpha]\, p(x + \sum_i \eps_i v_i) = D(p; x; v_\bullet^{\times \alpha})\);
when \(\IQ \subseteq \ik\), both sides equal the iterated directional
derivative \((D_{v_1}^{\alpha_1} \cdots D_{v_k}^{\alpha_k} p)(x)\).

\end{proposition}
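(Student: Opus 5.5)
The plan is to deduce the binomial statements from the square-free Boolean duality of Proposition \ref{prop:poly-taylor-duality}, applied to the slot realization, exactly as Proposition \ref{prop:multi-index-moebius} was deduced from Proposition \ref{prop:moebius-inversion}.

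\textbf{The $B_k^\nu$ part.} I will first establish the structure of $B_k^\nu$. The Newton monomials $(\delta)_\alpha = \prod_i (\delta_i)_{\alpha_i}$ for $0 \leq \alpha \leq \nu$ form a $\ik$-basis, since the defining relation in each variable is monic of degree $\nu_i+1$. For $\beta \leq \nu$, the evaluation maps $\rho_\beta: \delta_i \mapsto \beta_i$ are well defined, and they satisfy $\rho_\beta((\delta)_\alpha) = (\beta)_\alpha$.

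Next I expand $p(x + \sum_i \delta_i v_i) = \sum_{\alpha \leq \nu} c_\alpha\, (\delta)_\alpha$. Applying $\rho_\beta$ gives
\[
p(x + \textstyle\sum_i \beta_i v_i) = \sum_{\alpha \leq \beta} (\beta)_\alpha\, c_\alpha = \sum_{\alpha \leq \beta} \frac{(\beta)_\alpha}{\alpha!}\, (\alpha!\, c_\alpha).
\]
This is the zeta side of Proposition \ref{prop:multi-index-moebius}. Binomial Möbius inversion, together with the binomial Taylor duality of Proposition \ref{prop:taylor-duality}, then identifies $\alpha!\, c_\alpha = \Delta^\alpha(p; x; v_\bullet)$. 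The second displayed identity is the grouped definition from Definition \ref{def:forward-diff}.

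One subtlety is that this inversion determines $\alpha!\, c_\alpha$, not $c_\alpha$ itself. That is enough here because the binomial refinements assume $\IQ \subseteq \ik$. I will also remark that the cleaner route is to pull back along the slot realization, which avoids dividing by factorials.

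\textbf{The $A_k^\nu$ part, first identity.} The expansion is the multinomial theorem. Writing $p$ as a sum of monomials and expanding $p(x + \sum_i \eps_i v_i)$, the $\eps^\alpha$-coefficient times $\alpha!$ equals the $\eps^{S(\alpha)}$-coefficient of $p(x + \sum_{s \in S(\alpha)} \eps_s v_{\pi(s)})$ in $A_{S(\alpha)}$. This holds because the ring map $A_k^\nu \to A_{S(\nu)}$, $\eps_i \mapsto \sum_{s \in \pi^{-1}(i)} \eps_s$, sends $\eps_i^{a}$ to $a!\,\eps^{T}$ summed over $a$-subsets $T$. By definition that coefficient is $D(p; x; v_\bullet^{\times\alpha}) = D^\alpha$. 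So the first identity follows directly.

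\textbf{The $A_k^\nu$ part, Möbius identity.} This is the step I expect to be the main obstacle, because the evaluation points $\beta_i \eps_i$ are not evaluations of a function on a grid. I plan to transport the square-free Möbius identity of Proposition \ref{prop:poly-taylor-duality} along the same map $\phi: A_k^\nu \to A_{S(\nu)}$, $\eps_i \mapsto \sum_{s \in \pi^{-1}(i)} \eps_s$.

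Alternatively, I can compute directly in $A_k^\nu$. Substituting the first identity, the right-hand side becomes
\[
\sum_{\alpha \leq \nu} \frac{D^\alpha}{\alpha!}\, \eps^\alpha \sum_{\beta \leq \nu} (-1)^{\mathrm{wt}(\nu-\beta)} \frac{(\nu)_\beta}{\beta!}\, \beta^\alpha.
\]
The inner sum equals $\sum_\beta (-1)^{\mathrm{wt}(\nu-\beta)}\binom{\nu}{\beta}\beta^\alpha$, which is $\nu!$ times a Stirling number of the second kind per coordinate. It vanishes for $\alpha_i < \nu_i$ and equals $\nu!$ at $\alpha = \nu$. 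This holds because the $\nu$-th finite difference of a polynomial of degree below $\nu$ vanishes, and since $\alpha \leq \nu$ no higher terms arise. Hence the sum collapses to $\eps^\nu D^\nu$.

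I would present this direct computation as the proof, citing the finite-difference annihilation already implicit in Proposition \ref{prop:taylor-duality}. I would note the $\phi$-transport as the conceptual explanation linking it to the Boolean sieve.
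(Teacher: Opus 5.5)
Your proof is correct and follows the paper's argument: branch maps $\rho_\beta$ with $\rho_\beta((\delta)_\alpha)=(\beta)_\alpha$ plus binomial M\"obius inversion in $B_k^\nu$; the multinomial theorem for the $A_k^\nu$ expansion, which your slot map $\eps_i\mapsto\sum_{s\in\pi^{-1}(i)}\eps_s$ makes explicit; and the per-coordinate evaluation $\sum_{b}(-1)^{\nu_i-b}\binom{\nu_i}{b}b^{\alpha_i}$, which collapses the alternating sum to $\alpha=\nu$. One aside is inaccurate: pulling back along the slot realization does not avoid factorials, because that map sends $(\delta)_\alpha$ to $\alpha!$ times the sum of the $\delta^T$ of profile $\alpha$, so it again yields $\alpha!\,c_\alpha=\Delta^\alpha$ and the division is still needed; this does not affect your main argument, which uses the paper's standing assumption $\IQ\subseteq\ik$.
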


\begin{proof}

In \(B_k^\nu\): the branch maps \(\rho_\beta: B_k^\nu \to \ik\) send
\(\delta_i \mapsto \beta_i\) for \(0 \leq \beta \leq \nu\). Since
\(\rho_\beta((\delta)_\alpha) = (\beta)_\alpha\), the expansion
coefficients are determined by binomial Möbius inversion
\ref{prop:multi-index-moebius}, giving \(\Delta^\alpha / \alpha!\). The
Möbius identity is the definition of \(\Delta^\alpha\).

In \(A_k^\nu\): the expansion gives \(D^\alpha / \alpha!\) as the
\(\eps^\alpha\)-coefficient by the multinomial theorem. For the Möbius
identity: the coefficient of \(D^\alpha \eps^\alpha / \alpha!\) in the
alternating sum is \(\prod_i \Delta^{\nu_i}(x^{\alpha_i})(0)\), which
annihilates \(\alpha_i < \nu_i\) and gives \(\nu_i!\) at
\(\alpha_i = \nu_i\).

\end{proof}

\subsection{Faà di Bruno duality}\label{fauxe0-di-bruno-duality-1}

\begin{theorem}[Boolean Faà di Bruno]

\label{thm:cubical-fdb} Let \(q: \ik^e \to \ik^d\) and
\(p: \ik^d \to \ik\) be polynomials, \(x \in \ik^e\),
\(v_1, \dots, v_k \in \ik^e\), and \(y = q(x)\).

\[
  \Delta(p \circ q; x; v_{[k]}) = \sum_{\substack{H \subseteq \KP_+([k]) \\\\ \bigcup H = [k]}} \Delta(p; y; (\Delta(q; x; v_T))_{T \in H}),
\]

\[
  D(p \circ q; x; v_{[k]}) = \sum_{\pi \in \Part([k])} D(p; y; (D(q; x; v_B))_{B \in \pi}).
\]

\end{theorem}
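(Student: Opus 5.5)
The first identity is the special case of the Discrete Faà di Bruno Duality \ref{thm:dfdb}, part 1), with \(X = \ik^e\), \(Y = \ik^d\), \(Z = \ik\) regarded as abelian groups and \(g = q\), \(f = p\). Polynomiality plays no role here. I would still give a second proof inside \(B_k\), because that proof carries over verbatim to \(A_k\).

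\textbf{Algebraic proof in \(B_k\).} I set \(g_T := \Delta(q; x; v_T)\). Applying Boolean Taylor--Möbius duality \ref{prop:poly-taylor-duality} componentwise to \(q\) gives
\[
q(x + {\textstyle\sum_i} \delta_i v_i) = y + \sum_{T \in \KP_+(k)} g_T\, \delta^T
\]
in \(B_k^d\). Composition commutes with the ring homomorphisms involved, so \(p(q(x + \sum_i \delta_i v_i))\) is \(p\) evaluated at \(y + w\), where \(w = \sum_T g_T \delta^T\). I then need the Taylor expansion of \(p\) at \(y\) in \(B_K\)-style variables with \(2^k - 1\) idempotent-like directions \(\delta^T\). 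To get this cleanly, I introduce the auxiliary Boolean algebra \(B_{\KP_+(k)}\) with idempotent generators \(e_T\). There \(p(y + \sum_T e_T g_T) = \sum_{H \subseteq \KP_+(k)} \Delta(p; y; (g_T)_{T \in H})\, e^H\), again by \ref{prop:poly-taylor-duality}.

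The assignment \(e_T \mapsto \delta^T\) defines a ring map \(B_{\KP_+(k)} \to B_k\), since \((\delta^T)^2 = \delta^T\). Under this map \(e^H \mapsto \delta^{\bigcup H}\). Reading off the \(\delta^{[k]}\)-coefficient therefore picks out exactly the \(H\) with \(\bigcup H = [k]\). On the left side, that coefficient is \(\Delta(p \circ q; x; v_{[k]})\) by \ref{prop:poly-taylor-duality} applied to \(p \circ q\).

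\textbf{Same argument in \(A_k\).} I run the argument with \(\eps_i\) in place of \(\delta_i\). The first step gives \(q(x + \sum_i \eps_i v_i) = y + \sum_T D(q; x; v_T)\, \eps^T\). For the outer expansion I use the Taylor algebra \(A_{\KP_+(k)}\) with nilpotent generators \(e_T\), \(e_T^2 = 0\). Its expansion \(p(y + \sum_T e_T w_T) = \sum_H D(p; y; (w_T)_{T \in H})\, e^H\) holds by \ref{prop:poly-taylor-duality}, which is valid over any \(\ik\) because \(D\) is defined there as a coefficient.

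Since \((\eps^T)^2 = 0\), the assignment \(e_T \mapsto \eps^T\) is a ring map \(A_{\KP_+(k)} \to A_k\). It sends \(e^H\) to \(\eps^{\bigcup H}\) when the blocks of \(H\) are pairwise disjoint, and to \(0\) otherwise, by Example~\ref{ex:cube-algebras}. Taking the \(\eps^{[k]}\)-coefficient then leaves exactly the partitions \(\pi \in \Part([k])\), which yields the second formula.

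\textbf{Main obstacle.} The step I expect to need the most care is justifying that polynomial evaluation commutes with these substitutions. The expansion \(p(y + \sum_T e_T w_T)\) holds in the auxiliary algebra, where the \(w_T\) are formal elements; I need to specialize it along a ring homomorphism and then compose.

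The cleanest way is to note that \(p(y + \sum_T e_T w_T)\) with \(w_T \in \ik^d\) is an identity in \(A_{\KP_+(k)}\) (resp. \(B_{\KP_+(k)}\)). Pushing it forward under the ring map \(\phi: e_T \mapsto \eps^T\) gives \(\phi(p(\cdot)) = p(\phi(\cdot))\), because \(p\) is a polynomial with coefficients in \(\ik\) and \(\phi\) is a \(\ik\)-algebra map.

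A secondary point is well-definedness of the two maps: the relations \(e_T^2 = e_T\) resp. \(e_T^2 = 0\) must be respected, and this is exactly idempotence resp. square-zero of the monomials \(\delta^T\) resp. \(\eps^T\). Finally, the \(\delta^{[k]}\)- and \(\eps^{[k]}\)-coefficients are well defined because \(B_k\) and \(A_k\) are free with the stated bases.
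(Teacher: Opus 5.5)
Your proposal is correct and is essentially the paper's proof. It expands $q$ on the cube by Taylor--M\"obius duality, then pushes the Taylor expansion of $p$ in $2^k-1$ auxiliary idempotent (resp.\ square-zero) generators through the algebra map $e_T \mapsto \delta^T$ (resp.\ $e_T \mapsto \eps^T$), and reads off the top coefficient, where overlap absorption yields coverings and overlap annihilation yields partitions. Your extra remarks match the paper's remark after the theorem (the first identity is also the specialization of Theorem~\ref{thm:dfdb}) and spell out a step the paper leaves implicit ($\phi(p(a)) = p(\phi(a))$ for a $\ik$-algebra map).
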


\begin{proof}

Write \(q_T := \Delta(q; x; v_T)\) for
\(\emptyset \neq T \subseteq [k]\). By cubical Taylor
\ref{prop:poly-taylor-duality},
\(q(x + \sum_i \delta_i v_i) = y + \sum_{\emptyset \neq T} q_T\, \delta^T\)
in \(B_k\). Since \((\delta^T)^2 = \delta^T\), the map
\(\delta_T \mapsto \delta^T\) defines an algebra map \(B_l \to B_k\)
where \(l = 2^k - 1\). Applying cubical Taylor to \(p\) at \(y\) in the
\(l\) directions \(q_T\) via this map:

\[
  (p \circ q)(x + {\textstyle\sum_i} \delta_i v_i) = p(y + {\textstyle\sum_T} q_T \delta^T) = \sum_{S \subseteq [k]} \sum_{\substack{H \subseteq \KP_+(S) \\\\ \bigcup H = S}} \Delta(p; y; (q_T)_{T \in H})\, \delta^S.
\]

Extracting \([\delta^{[k]}]\) gives the \(B_k\) formula: the covering
condition \(\bigcup H = [k]\) comes from
\(\delta^{T_1} \cdots \delta^{T_r} = \delta^{T_1 \cup \cdots \cup T_r}\).

The same argument in \(A_k\): since \((\eps^T)^2 = 0\), the map
\(\eps_T \mapsto \eps^T\) defines \(A_l \to A_k\). Now
\(\eps^{T_1} \cdots \eps^{T_r} = 0\) unless the \(T_i\) are pairwise
disjoint, so coverings reduce to partitions.

\end{proof}

\begin{remark}

The \(B_k\) formula is the algebraic form of the discrete Faà di Bruno
\ref{thm:dfdb}. Since
\(B_k \otimes Y \cong \mathrm{Map}(\set{0,1}^k, Y)\) and arbitrary maps
act vertexwise, the \(B_k\) proof above is the same proof as
\ref{thm:dfdb} expressed in the coordinate ring of the cube.

\end{remark}

\subsection{Iterated Faà di Bruno}\label{iterated-fauxe0-di-bruno-1}

\begin{proposition}[Binomial Faà di Bruno / Constantine--Savits]

\label{prop:poly-fdb} Let \(q: \ik^e \to \ik^d\) and
\(p: \ik^d \to \ik\) be polynomials, \(x \in \ik^e\),
\(v_1, \dots, v_k \in \ik^e\), \(y = q(x)\), and \(\gamma \in \IN_0^k\):

\[
  D(p \circ q;\, x;\, v_\bullet^{\times \gamma}) = \sum_{\kappa \vdash \gamma} \frac{\gamma!}{\kappa!\, \prod_\alpha (\alpha!)^{\kappa(\alpha)}}\, D(p;\, y;\, (D(q;\, x;\, v_\bullet^{\times \alpha}))_\alpha^{\times \kappa(\alpha)}).
\]

\end{proposition}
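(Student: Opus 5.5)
The plan is to derive the multi-index formula from the square-free partition formula of \ref{thm:cubical-fdb} by passing to the slot set, exactly as the binomial discrete formulas were obtained from the Boolean ones in \ref{thm:iterated-dfdb}. Let $S = S(\gamma)$ be the Boolean realization, with projection $\pi: S(\gamma) \to [k]$, and equip it with the slot directions $w_s = v_{\pi(s)}$. By definition (\ref{prop:binomial-taylor-duality}), $D(p \circ q; x; v_\bullet^{\times\gamma})$ is the square-free differential $D(p\circ q; x; w_{S(\gamma)})$, the $\eps^{S(\gamma)}$-coefficient in $A_{|\gamma|}$. This identification holds over any $\ik$, so the only place we divide is the final coefficient count.

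First, I apply \ref{thm:cubical-fdb} over the index set $S(\gamma)$:
\[
  D(p\circ q; x; w_{S(\gamma)}) = \sum_{\rho \in \Part(S(\gamma))} D(p; y; (D(q; x; w_B))_{B\in\rho}).
\]
Second, I use the differential analogue of profile invariance (\ref{lem:profile-invariance}). For a block $B \subseteq S(\gamma)$, the list $(w_s)_{s\in B}$ is a permutation of $v_\bullet^{\times \nu(B)}$. Since $D(q;x;\cdot)$ is symmetric in its directions, being a coefficient of $\eps^B$ in a commutative algebra, we get $D(q;x;w_B) = D(q;x;v_\bullet^{\times\nu(B)})$. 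The outer $D(p;y;\cdot)$ is likewise symmetric. Hence each summand depends only on the profile $\kappa = \nu(\rho) \in \KM_+^2([k])$, where $\kappa(\alpha)$ is the number of blocks of profile $\alpha$. Because $\rho$ partitions $S(\gamma)$, we have $\sum_\alpha \kappa(\alpha)\alpha = \nu(S(\gamma)) = \gamma$, that is, $\kappa \vdash \gamma$. Grouping therefore yields $\sum_{\kappa\vdash\gamma} \Part_2(\gamma,\kappa)\, D(p;y;(D(q;x;v^{\times\alpha}))_\alpha^{\times\kappa(\alpha)})$, where $\Part_2(\gamma,\kappa)$ is the number of set partitions of $S(\gamma)$ with profile $\kappa$.

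The main step is counting these set partitions:
\[
  \Part_2(\gamma,\kappa) = \frac{\gamma!}{\kappa!\,\prod_\alpha (\alpha!)^{\kappa(\alpha)}}.
\]
I would count ordered data first. The group $G = \prod_i \mathfrak{S}_{\gamma_i}$ of order $\gamma!$ permutes slots within each fiber of $\pi$, and it acts transitively on partitions of a fixed profile. To see transitivity, list the blocks in some order with their profiles. Within fiber $i$, a block of profile $\alpha$ must take exactly $\alpha_i$ slots, so any two such partitions differ by a fiberwise bijection matching blocks of equal profile.

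The stabilizer consists of the permutations that map each block to a block of the same profile. This means permuting the $\kappa(\alpha)$ blocks of profile $\alpha$ among themselves, giving the factor $\kappa!$, and, once the block assignment is fixed, permuting slots within each fiber of each block, giving $\prod_\alpha (\alpha!)^{\kappa(\alpha)}$. These two contributions combine as a semidirect (wreath) product, so the stabilizer has order $\kappa!\prod_\alpha(\alpha!)^{\kappa(\alpha)}$. The orbit–stabilizer theorem then gives the formula, which is an integer. This integer statement is what makes the identity hold over any $\ik$. Division is needed only if one rewrites $D^\gamma$ as $\gamma!$ times a Taylor coefficient, which matches the standing assumption $\IQ\subseteq\ik$ for binomial refinements.

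The delicate point is checking that transitivity and the stabilizer description are right when several blocks share a profile, so that no over- or under-counting occurs. I would sanity-check the formula on the case $\gamma=(2)$, where the partitions are $\{\{1,2\}\}$ and $\{\{1\},\{2\}\}$ with coefficients $1$ and $2!/2!=1$.
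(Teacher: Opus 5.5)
Your proposal is correct and follows essentially the same route as the paper: apply the $A_k$ partition formula of Theorem~\ref{thm:cubical-fdb} on the slot set $S(\gamma)$, use the symmetry of $D$ to group the partitions by their profile $\kappa \vdash \gamma$, and count the partitions with a given profile. Your orbit--stabilizer count under $\prod_i \mathfrak{S}_{\gamma_i}$ is also correct, including when several blocks share a profile; it supplies the justification of $\Part_2(\gamma,\kappa) = \gamma!/(\kappa!\prod_\alpha(\alpha!)^{\kappa(\alpha)})$, which the paper only asserts at this point and later obtains by directly counting matched partitions.
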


\begin{proof}

Apply the \(A_k\) Faà di Bruno \ref{thm:cubical-fdb} to \(S(\gamma)\)
with \(|\gamma|\) directions where \(v_i\) is repeated \(\gamma_i\)
times. Since interchanging repeated copies of \(v_i\) does not change
\(D^H\), we group partitions by their multi-index profile \(\kappa\).
The number of partitions with profile \(\kappa\) is
\(\Part_2(\gamma, \kappa) = \gamma! / (\kappa!\, \prod_\alpha (\alpha!)^{\kappa(\alpha)})\).

\end{proof}

\begin{definition}[Iterated differentials]

\label{def:poly-iterated-diff} For polynomial maps \(f_1, \dots, f_m\)
with \(f_r: \ik^{d_{r-1}} \to \ik^{d_r}\), \(x \in \ik^{d_0}\),
\(v_1, \dots, v_k \in \ik^{d_0}\), and
\(x_r = (f_r \circ \cdots \circ f_1)(x)\), define the \emph{iterated
differential} for \(\kappa \in \KM_+^m(k)\):
\(D^\alpha(f_1;\, x;\, v_\bullet) = D(f_1;\, x;\, v_\bullet^{\times \alpha})\)
for \(m = 1\), and for \(m \geq 2\):

\[
  D^\kappa(f_1, \dots, f_m;\, x;\, v_\bullet) := D(f_m;\, x_{m-1};\, (D^\lambda(f_1, \dots, f_{m-1};\, x;\, v_\bullet)^{\times \kappa(\lambda)})_{\lambda \in \supp(\kappa)}).
\]

For \(H \in \Part_m(k)\) with \(m \geq 2\), set
\(D^H(f_1, \dots, f_m;\, x;\, v_\bullet) := D(f_m;\, x_{m-1};\, (D^L(f_1, \dots, f_{m-1};\, x;\, v_\bullet))_{L \in H})\).

\end{definition}

\begin{proposition}[Iterated infinitesimal Faà di Bruno]

\label{prop:poly-iterated-fdb} Let \(f_1, \dots, f_m\) be polynomial
maps as above, \(z = x_m\), and \(\gamma \in \IN_0^k\).

\begin{enumerate}
\def\labelenumi{\arabic{enumi})}
\tightlist
\item
  \emph{Boolean Faà di Bruno:}
\end{enumerate}

\[
  D(f_m \circ \cdots \circ f_1;\, x;\, v_\bullet) = \sum_{H \in \Part_m(k)} D^H(f_1, \dots, f_m;\, x;\, v_\bullet).
\]

\begin{enumerate}
\def\labelenumi{\arabic{enumi})}
\setcounter{enumi}{1}
\tightlist
\item
  \emph{Binomial Faà di Bruno:}
\end{enumerate}

\[
  D^\gamma(f_m \circ \cdots \circ f_1;\, x;\, v_\bullet) = \sum_{\kappa \in \KM_+^m(k)} \Part_m(\gamma, \kappa)\, D^\kappa(f_1, \dots, f_m;\, x;\, v_\bullet),
\]

where
\(\Part_m(\gamma, \kappa) := \#\set{H \in \Part_m(S(\gamma)) : \nu(H) = \kappa}\)
is the partition grouping coefficient.

\end{proposition}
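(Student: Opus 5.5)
The plan is to mirror the proof of the discrete iterated Faà di Bruno theorem (\ref{thm:iterated-dfdb}), replacing the Boolean cube algebra \(B_k\) by the Taylor algebra \(A_k\), so that coverings are replaced by partitions exactly as in the passage from the first to the second formula of \ref{thm:cubical-fdb}. For part 1, I will prove by induction on \(m\) the following \emph{Taylor composition} statement in \(A_k\):
\[
  (f_m \circ \cdots \circ f_1)(x + {\textstyle\sum_i} \eps_i v_i) = \sum_{S \subseteq [k]} \Bigl(\sum_{H \in \Part_m(S)} D^H(f_1, \dots, f_m;\, x;\, v_\bullet)\Bigr)\, \eps^S .
\]
Here the \(S = \emptyset\) term is read as \(x_m\), and \(D^H\) for \(m = 1\), \(H = S\), is \(D(f_1; x; v_S)\). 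The case \(m = 1\) is the \(A_k\) part of \ref{prop:poly-taylor-duality}.

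For the inductive step, write \(F' = f_{m-1} \circ \cdots \circ f_1\) and \(d_L := D^L(f_1, \dots, f_{m-1}; x; v_\bullet)\). The induction hypothesis gives
\[
  F'(x + \textstyle\sum_i \eps_i v_i) = x_{m-1} + \sum_{\emptyset \neq S} \sum_{L \in \Part_{m-1}(S)} d_L\, \eps^{S}.
\]
I index the directions by pairs \((S, L)\) and use the algebra map \(A_l \to A_k\), \(\eps_{(S,L)} \mapsto \eps^S\), which is well defined because \((\eps^S)^2 = 0\) for \(S \neq \emptyset\). Applying \ref{prop:poly-taylor-duality} to \(f_m\) at \(x_{m-1}\) and pushing forward along this map yields
\[
  \sum_{\mathcal{F}} D(f_m; x_{m-1}; (d_L)_{(S,L) \in \mathcal{F}}) \prod_{(S,L) \in \mathcal{F}} \eps^S ,
\]
summed over finite families \(\mathcal{F}\) of pairs. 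By the multiplication law of \ref{ex:cube-algebras}, the product is nonzero only if the \(S\)'s are pairwise disjoint, in which case it equals \(\eps^{\bigsqcup S}\). The surviving families \(\set{(B, H_B)}_{B \in \pi}\), with \(\pi \in \Part(S)\) and \(H_B \in \Part_{m-1}(B)\), are exactly the elements \(H = \set{H_B}\) of \(\Part_m(S)\), by \ref{def:higher-partitions}. The corresponding coefficient is \(D^H\) by \ref{def:poly-iterated-diff}.

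One small point needs checking: distinct blocks \(B\) force distinct \(H_B\), since \(H_B\) determines \(B = \lf(H_B)\). So the indexing by pairs \((S,L)\) and by elements \(L\) agree, and no multiplicities are lost. Extracting \([\eps^{[k]}]\) then gives part 1.

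For part 2, I apply part 1 to the slot set \(S(\gamma)\) with directions \(v \circ \pi\), i.e.\ \(v_i\) repeated \(\gamma_i\) times, so that the left side becomes \(D(f_m \circ \cdots \circ f_1; x; v_\bullet^{\times\gamma}) = D^\gamma\). I then need the infinitesimal analogue of profile invariance \ref{lem:profile-invariance}: \(D^H(f_\bullet; x; v \circ \pi) = D^{\nu(H)}(f_\bullet; x; v)\). Its proof is the same induction, using only that the square-free \(D\) is symmetric in its directions; this holds because \(D(p; x; v_S)\) is the \(\eps^S\)-coefficient of \(p(x + \sum \eps_i v_i)\), which is invariant under relabeling the \(\eps_i\). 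Grouping the partitions of \(S(\gamma)\) by profile gives the coefficient \(\#\set{H \in \Part_m(S(\gamma)) : \nu(H) = \kappa} = \Part_m(\gamma, \kappa)\).

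The main obstacle is the bookkeeping in the inductive step. One must make sure that the substitution into \(f_m\) is a legitimate application of Taylor duality in finitely many formal directions, that the nilpotent multiplication law kills precisely the non-partition families, and that the identification of surviving families with \(\Part_m(S)\) is a bijection, which rests on the leaf supports determining the blocks. Everything else is formal. No division occurs, so part 1 holds over any commutative ring \(\ik\), and part 2 holds over any \(\ik\) as well when \(D^\gamma\) is defined via the slot realization.
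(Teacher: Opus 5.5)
Your proposal is correct and takes the same route as the paper. It composes the $m$ Taylor--Möbius expansions in $A_k$, uses nilpotency to discard every non-partition family at each level, and reads off the $\eps^{[k]}$-coefficient; then it applies part 1 to the slot set $S(\gamma)$ and groups by profile. Your write-up also makes explicit what the paper's short proof leaves implicit: the induction on $m$ via the algebra map $\eps_{(S,L)} \mapsto \eps^S$, the check $B = \lf(H_B)$ that ensures no blocks collapse, and the infinitesimal analogue of profile invariance.
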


\begin{proof}

\begin{enumerate}
\def\labelenumi{\arabic{enumi})}
\item
  Evaluate \(f_m \circ \cdots \circ f_1\) on the infinitesimal cube in
  \(A_k\) by composing \(m\) Taylor--Möbius expansions. The nilpotent
  multiplication \(\eps^{T_1} \cdots \eps^{T_p} = 0\) unless the \(T_i\)
  are pairwise disjoint forces partition logic at each level, so only
  higher partitions \(H \in \Part_m(k)\) contribute. Reading off the
  \(\eps^{[k]}\)-component gives the scalar identity.
\item
  Apply part 1 to \(S(\gamma)\) with \(|\gamma|\) directions where
  \(v_i\) is repeated \(\gamma_i\) times. Since \(D^H\) depends only on
  the profile \(\kappa = \nu(H)\), we group the \(m\)-fold partitions of
  \(S(\gamma)\) by profile. The number of \(m\)-fold partitions with
  profile \(\kappa\) is \(\Part_m(\gamma, \kappa)\).
\end{enumerate}

\end{proof}

\begin{proposition}[Partition coefficient recursion]

\label{prop:poly-partition-recursion} \(\Part_m(\gamma, \kappa) = 0\)
unless \(\kappa \vdash \gamma\), and
\(\Part_1(\gamma, \alpha) = \delta_{\gamma, \alpha}\). For \(m \geq 2\)
and \(\kappa \vdash \gamma\), the coefficients satisfy the level
recursion

\[
  \Part_m(\gamma, \kappa) = \frac{\gamma!}{\kappa!} \prod_{\substack{\beta \in \KM_+^{m-1}(S) \\\\ \alpha = \lf(\beta)}}
  \left(
    \frac{1}{\alpha!} \cdot \Part_{m-1}(\alpha, \beta)
  \right)^{\kappa(\beta)}.
\]

The first closed cases are

\[
  \Part_2(\gamma, \kappa) = \frac{\gamma!}{\kappa!} \prod_{\alpha \in \KM_+(S)} (\alpha!)^{-\kappa(\alpha)}, \qquad 
  \Part_3(\gamma, \kappa) = \frac{\gamma!}{\kappa!} \prod_{\beta \in \KM_+^2(S)}
  \left(
    \frac{1}{\beta!} \prod_{\alpha \in \KM_+(S)}  (\alpha!)^{-\beta(\alpha)}
  \right)^{\kappa(\beta)}.
\]

All products here and below are finite, since only factors indexed by
the iterated supports of \(\kappa\) differ from \(1\).

\end{proposition}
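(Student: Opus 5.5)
We prove the statement by counting \(m\)-fold partitions of the slot set \(S(\gamma)\) directly, organized by the top-level partition, and then use induction on \(m\).

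\emph{Vanishing unless \(\kappa \vdash \gamma\).} By induction on \(m\) we first show that for \(H \in \Part_m(S(\gamma))\) the profile satisfies \(\lf(\nu(H)) = \nu(\lf(H))\). For \(m = 1\) this is the definition \(\nu(S(\gamma)) = \gamma\). For \(m \geq 2\), the leaf supports of the blocks \(L \in H\) partition \(S(\gamma)\), and \(\lf(\nu(H)) = \sum_\lambda \nu(H)(\lambda)\,\lf(\lambda) = \sum_{L \in H} \nu(\lf(L)) = \gamma\). Since \(\lf(H) = S(\gamma)\), this gives \(\lf(\nu(H)) = \gamma\), so \(\Part_m(\gamma,\kappa) = 0\) unless \(\kappa \vdash \gamma\). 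The case \(m = 1\) is immediate because \(\Part_1(S(\gamma)) = \set{S(\gamma)}\) has profile \(\gamma\).

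\emph{Level recursion.} Fix \(m \geq 2\) and \(\kappa \vdash \gamma\). Choosing an \(H\) with profile \(\kappa\) amounts to two steps:
\begin{enumerate}
\item choose a partition \(\pi\) of \(S(\gamma)\) whose blocks are labelled by elements of \(\supp(\kappa)\), with exactly \(\kappa(\beta)\) blocks labelled \(\beta\), and each block labelled \(\beta\) having fiber profile \(\alpha = \lf(\beta)\);
\item on each block \(B\) labelled \(\beta\), choose an \((m-1)\)-fold partition of \(B \cong S(\alpha)\) with profile \(\beta\).
\end{enumerate}
Step 2 contributes \(\prod_\beta \Part_{m-1}(\alpha,\beta)^{\kappa(\beta)}\) choices. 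The identification \(B \cong S(\alpha)\) involves choosing a bijection, but \(\Part_{m-1}(\alpha,\beta)\) is invariant under relabelling slots within fibers, so the count is well defined.

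For step 1, we count ordered sequences of blocks and then divide by the symmetries. Distributing the \(\gamma_s\) slots of each fiber \(s\) into a labelled list of blocks with prescribed fiber sizes is a multinomial count, \(\prod_s \gamma_s!/\prod_{\text{blocks}} \alpha_s!\), which gives \(\gamma!/\prod_\beta (\alpha!)^{\kappa(\beta)}\). Blocks carrying the same label \(\beta\) are interchangeable, and distinct blocks are disjoint and hence distinct as sets, so we divide by exactly \(\kappa!\). Moreover, distinct labelled families of inner partitions give distinct \(H\), because \(H\) determines \(\pi = \{\lf(L)\}\) and the inner partitions. Multiplying the two steps yields the stated recursion.

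\emph{Closed cases.} For \(m = 2\), \(\Part_1(\alpha,\beta) = \delta_{\alpha,\beta}\) forces \(\beta = \alpha\), and the recursion reduces to the Constantine--Savits coefficient, in agreement with Proposition~\ref{prop:poly-fdb}. Substituting this \(m = 2\) formula into the recursion gives \(\Part_3\): the \(\alpha!\) factors cancel, leaving \(\frac{1}{\beta!}\prod_\alpha (\alpha!)^{-\beta(\alpha)}\) for each \(\beta\).

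\emph{Main obstacle.} The delicate step is the symmetry bookkeeping in the second paragraph. The division by \(\kappa!\) has to be exact, with no stabilizer beyond permutations of same-labelled blocks. This relies on blocks being disjoint, which is precisely where partitions differ from coverings, and on the invariance of \(\Part_{m-1}\) under relabelling slots within each fiber.
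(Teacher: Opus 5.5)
Your proof is correct and follows the paper's own argument. You first show by induction that the leaves of an $m$-fold partition partition the underlying block, so that $\lf(\nu(H))$ is that block's profile; you then use the same two-step count: a matched partition of $S(\gamma)$ into blocks of type $\lf(\beta)$, giving $\gamma!/(\kappa!\prod_\beta(\lf(\beta)!)^{\kappa(\beta)})$ choices, followed by $\Part_{m-1}(\lf(\beta),\beta)$ inner choices per block. One cosmetic point: your inductive statement should be phrased for $m$-fold partitions of an arbitrary block $B \subseteq S(\gamma)$, as in the paper, because the inductive step applies it to the blocks $\lf(L)$ rather than to $S(\gamma)$. Your extra bookkeeping (freeness of the $\kappa!$ action, relabelling invariance, injectivity, and the check of the closed cases) fills in details the paper leaves implicit.
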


\begin{proof}

The leaves of an \(m\)-fold partition \(H \in \Part_m(B)\) of a block
\(B \subseteq S(\gamma)\) partition \(B\), so \(\lf(\nu(H)) = \nu(B)\)
by induction on \(m\). In particular, \(\Part_{m-1}(\lf(\beta), \beta)\)
is the only nonvanishing evaluation of \(\Part_{m-1}(\cdot, \beta)\),
and the profile of an \(m\)-fold partition of \(S(\gamma)\) is a
multi-index partition of \(\gamma\).

For the recursion, an \(m\)-fold partition with profile \(\kappa\) is
built in two independent steps. First, partition \(S(\gamma)\) into
blocks matched with the elements of the multiset \(\kappa\), where a
block matched with \(\beta\) has type \(\lf(\beta)\); there are
\(\gamma! / (\kappa!\, \prod_\beta (\lf(\beta)!)^{\kappa(\beta)})\) such
matched partitions. Second, equip each block matched with \(\beta\) with
an \((m{-}1)\)-fold partition of profile \(\beta\), in
\(\Part_{m-1}(\lf(\beta), \beta)\) ways.

\end{proof}

\subsection{Faà di Bruno deformation}\label{fauxe0-di-bruno-deformation}

The two Taylor algebras \(B_k^\nu\) and \(A_k^\nu\) are fibers of a flat
deformation over \(\ik[t]\). The deformation construction carries over
mutatis mutandis to the iterated and binomial settings via the grid
algebra \(C_k^\nu\); we present the Boolean two-map case here, since it
is the least notation-heavy and already contains the essential
mechanism.

\begin{definition}[Deformation cube algebra]

\label{def:deformation-algebra} For \(\nu \in \IN_0^k\), define:

\[
  C_k^\nu := \ik[t][x_1, \dots, x_k] \,\big/\, \big({\textstyle\prod_{j=0}^{\nu_i}}(x_i - jt) \,:\, 1 \leq i \leq k\big).
\]

For \(\nu = \mathbf{1}\), write \(C_k = C_k^{\mathbf{1}}\); the relation
\(x_i^2 = tx_i\) gives \(x^S x^T = t^{|S \cap T|}\, x^{S \cup T}\). At
\(t = 1\) all coverings contribute; at \(t = 0\) only partitions remain.

\end{definition}

\begin{lemma}[Flatness and fibers]

\label{lem:flatness} \(C_k^\nu\) is a free \(\ik[t]\)-module of rank
\(\prod_i (\nu_i + 1)\). In particular, \(C_k^\nu\) is flat over
\(\ik[t]\), with fibers

\[
  C_k^\nu/(t) \cong A_k^\nu, \qquad C_k^\nu/(t-1) \cong B_k^\nu.
\]

\end{lemma}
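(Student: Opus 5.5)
The plan is to prove freeness first, via the Newton basis, and then read off the two fibers by base change. For each \(i\) put \(P_i(x_i) = \prod_{j=0}^{\nu_i}(x_i - jt) \in \ik[t][x_i]\). This is a monic polynomial in \(x_i\) of degree \(\nu_i + 1\) with coefficients in \(\ik[t]\).

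\emph{Freeness.} Since \(P_i\) is monic, division with remainder works over the coefficient ring \(\ik[t]\), with no need to invert anything. It follows that \(\ik[t][x_i]/(P_i)\) is free over \(\ik[t]\) with basis \(1, x_i, \dots, x_i^{\nu_i}\). Equivalently, one can use the Newton basis \((x_i)_{j,t} := \prod_{l=0}^{j-1}(x_i - lt)\) for \(0 \le j \le \nu_i\). This basis is unitriangular with respect to the monomial basis, so it is again a basis. The relations involve disjoint sets of variables, so
\[
  C_k^\nu \cong \bigotimes_{i=1}^k \ik[t][x_i]/(P_i)
\]
as \(\ik[t]\)-algebras, the tensor product taken over \(\ik[t]\). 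A tensor product of free modules is free, and its ranks multiply. This gives freeness of rank \(\prod_i(\nu_i+1)\), with basis the monomials \(x^\alpha\) for \(\alpha \le \nu\), or equally the Newton products \(\prod_i (x_i)_{\alpha_i,t}\). Flatness then follows because free modules are flat.

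\emph{Fibers.} Quotienting by an ideal commutes with adjoining relations, so
\[
  C_k^\nu/(t) \cong \ik[x_1,\dots,x_k]/(P_i|_{t=0}).
\]
At \(t=0\) we have \(P_i|_{t=0} = x_i^{\nu_i+1}\), which gives \(A_k^\nu\) after renaming \(x_i \mapsto \eps_i\). Likewise, at \(t = 1\) we have \(P_i|_{t=1} = \prod_{j=0}^{\nu_i}(x_i - j) = (x_i)_{\nu_i+1}\). So \(C_k^\nu/(t-1) \cong B_k^\nu\) after renaming \(x_i \mapsto \delta_i\). For \(\nu = \mathbf 1\) I also want to verify the stated rule \(x^Sx^T = t^{|S\cap T|}x^{S\cup T}\). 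This follows directly from \(x_i^2 = tx_i\), applied once for each \(i \in S \cap T\).

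\emph{Main obstacle.} I expect the only real subtlety to be the justification that the quotient by monic relations is free over an arbitrary commutative base ring, rather than only over a field. This is where monicity is essential: it gives the remainder theorem, and hence the spanning property. For linear independence, use that \(\ik[t][x]\) is free on the monomials and that \((P_i)\) consists of the multiples \(P_i h\). A nonzero multiple has degree at least \(\nu_i+1\), because the leading coefficient of \(P_i\) is \(1\) and therefore not a zero divisor. Hence no nonzero combination of \(1, \dots, x_i^{\nu_i}\) lies in the ideal. The tensor decomposition over \(\ik[t]\) then reduces the multivariate case to this one-variable statement. The decomposition itself uses the standard isomorphism \(R[x,y]/(f(x),g(y)) \cong R[x]/(f) \otimes_R R[y]/(g)\).
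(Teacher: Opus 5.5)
Your proof is correct and follows the same route as the paper. Monicity of $\prod_{j=0}^{\nu_i}(x_i - jt)$ in $x_i$ gives freeness over $\ik[t]$ with monomial basis $\{x^\alpha : 0 \leq \alpha \leq \nu\}$, and the two fibers are read off by substituting $t = 0$ and $t = 1$; you just supply the details the paper leaves implicit (division with remainder over $\ik[t]$, the tensor decomposition, and linear independence via the unit leading coefficient).
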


\begin{proof}

The relations \(\prod_{j=0}^{\nu_i}(x_i - jt) = 0\) are monic of degree
\(\nu_i + 1\) in \(x_i\), so \(C_k^\nu\) is free over \(\ik[t]\) with
monomial basis \(\set{x^\alpha : 0 \leq \alpha \leq \nu}\). The fiber
identifications follow by substituting \(t = 0\) and \(t = 1\).

\end{proof}

\begin{proposition}[Taylor--Möbius duality in $C_k$]

\label{prop:deformation-taylor} Let \(p \in \ik[y_1, \dots, y_d]\),
\(x \in \ik^d\), and \(v_1, \dots, v_k \in \ik^d\). Expand
\(p(x + \sum_i v_i x_i) = \sum_{S \subseteq [k]} c_S(t)\, x^S\) in
\(C_k\). Then

\[
  c_S(t) = \frac{1}{t^{|S|}}\, \Delta(p; x; (tv_i)_{i \in S}), \qquad c_S(0) = D(p; x; v_S).
\]

The coefficients \(c_S(t)\) lie in \(\ik[t]\); in particular,
\(\Delta(p; x; tv_1, \dots, tv_k)\) is divisible by \(t^k\) in
\(\ik[t]\).

\end{proposition}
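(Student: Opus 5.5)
The plan is to compute the coefficients \(c_S(t)\) in two ways and compare. The first uses freeness, the second uses specialization to cube vertices.

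First, freeness. By Lemma~\ref{lem:flatness}, \(C_k\) is free over \(\ik[t]\) with basis \(x^S\). Expanding \(p(x + \sum_i v_i x_i)\) monomial by monomial, every product of the \(x_i\) reduces via \(x_i^2 = t x_i\) to \(t^{j} x^S\). Hence \(c_S(t) \in \ik[t]\) from the outset.

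Second, specialization. For \(T \subseteq [k]\), consider the \(\ik[t]\)-algebra map \(\rho_T \colon C_k \to \ik[t]\) given by \(x_i \mapsto t\cdot 1_{i \in T}\). It is well defined because \(t^2 = t\cdot t\) and \(0 = t \cdot 0\). It sends \(x^S \mapsto t^{|S|}\) if \(S \subseteq T\) and to \(0\) otherwise. Applying \(\rho_T\) to the expansion gives
\[
  p\bigl(x + {\textstyle\sum_{i\in T}} t v_i\bigr) = \sum_{S \subseteq T} t^{|S|} c_S(t).
\]
This is a zeta transform in the abelian group \(\ik[t]\) of the cube \(S \mapsto t^{|S|} c_S(t)\). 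Boolean Möbius inversion (Proposition~\ref{prop:moebius-inversion}) then yields \(t^{|S|} c_S(t) = \Delta(p; x; (tv_i)_{i\in S})\), exactly as in the \(B_k\) part of Proposition~\ref{prop:poly-taylor-duality}. The formula \(c_S(t) = t^{-|S|}\Delta(\dots)\) should be read in \(\ik[t]\), and this is where divisibility enters. Since \(c_S(t)\) is a polynomial, \(\Delta(p;x;(tv_i)_{i\in S})\) equals \(t^{|S|}\) times a polynomial. Taking \(S = [k]\) gives divisibility by \(t^k\). The quotient is unique even if \(\ik[t]\) has zero divisors, because \(t\) itself is a nonzerodivisor in \(\ik[t]\).

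Third, the fiber at \(t=0\). Reduce modulo \(t\) via Lemma~\ref{lem:flatness}, \(C_k/(t) \cong A_k\) with \(x_i \mapsto \eps_i\). The basis \(x^S\) maps to \(\eps^S\), and the expansion maps to \(p(x+\sum_i v_i\eps_i) = \sum_S c_S(0)\eps^S\). Comparing coefficients in the free basis of \(A_k\) gives \(c_S(0) = D(p;x;v_S)\), which is the definition of \(D\) in Proposition~\ref{prop:poly-taylor-duality}.

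The main subtlety is the second step. One must check that the specializations \(\rho_T\) are legitimate algebra maps over \(\ik[t]\), rather than only after inverting \(t\). One must also ensure that the division by \(t^{|S|}\) is interpreted as an identity of polynomials, without assuming \(\ik\) is a domain. Both points are settled by observing that \(\rho_T\) respects the relations and that \(t\) is a nonzerodivisor in \(\ik[t]\).
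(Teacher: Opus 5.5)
Your proof is correct and follows the paper's argument. Freeness of \(C_k\) makes the \(c_S\) polynomials, branch maps to the cube vertices plus Boolean Möbius inversion identify \(t^{|S|}c_S(t)\) with \(\Delta(p;x;(tv_i)_{i\in S})\), and reduction to the fiber \(A_k\) gives \(c_S(0)=D(p;x;v_S)\). The one difference is minor: you define \(\rho_T\) directly on \(C_k\) by \(x_i\mapsto t\cdot 1_{i\in T}\), whereas the paper passes to the generic fiber \(C_k[t^{-1}]\) and sets \(\delta_i=t^{-1}x_i\), so your version never leaves \(\ik[t]\).
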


\begin{proof}

Write \(p(x + \sum_i v_i x_i) = \sum_S c_S\, x^S\) in \(C_k\) (free over
\(\ik[t]\) with basis \(x^S\)). In the generic fiber \(C_k[t^{-1}]\),
set \(\delta_i = t^{-1} x_i\), so \(\delta_i^2 = \delta_i\). The branch
maps \(\rho_T: \delta_i \mapsto 1_{i \in T}\) give
\(\rho_T(p(x + \sum_i v_i x_i)) = p(x + \sum_{i \in T} tv_i)\). Since
\(\rho_T(x^S) = t^{|S|} \cdot 1_{S \subseteq T}\), Boolean Möbius
inversion \ref{prop:moebius-inversion} gives
\(t^{|S|} c_S(t) = \sum_{T \subseteq S} (-1)^{|S|-|T|}\, p(x + \sum_{i \in T} tv_i) = \Delta(p; x; (tv_i)_{i \in S})\),
hence \(c_S(t) = t^{-|S|}\, \Delta(p; x; (tv_i)_{i \in S})\). Since
\(c_S \in \ik[t]\) (freeness), the divisibility follows, and
\(c_S(0) = D(p; x; v_S)\) by definition.

\end{proof}

\begin{lemma}[Multilinear part of the mixed difference]

\label{lem:multilinear-part} For a polynomial \(p\), a point \(y\), and
\(w_1, \dots, w_r \in \ik^d\):

\[
  \Delta(p;\, y;\, \eps_1 w_1, \dots, \eps_r w_r) = D(p;\, y;\, w_1, \dots, w_r)\, \eps^{[r]} \qquad \text{in } A_r.
\]

In particular, the multilinear part of \(\Delta(p; y; w_\bullet)\) in
\((w_1, \dots, w_r)\) equals \(D(p; y; w_\bullet)\).

\end{lemma}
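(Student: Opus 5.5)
The plan is to compute the alternating sum defining \(\Delta(p; y; \eps_1 w_1, \dots, \eps_r w_r)\) directly inside \(A_r\), using the Taylor expansion of Proposition~\ref{prop:poly-taylor-duality} in \(A_r\). For each \(T \subseteq [r]\) we expand
\[
  p\bigl(y + {\textstyle\sum_{i \in T}} \eps_i w_i\bigr) = \sum_{R \subseteq T} D(p; y; w_R)\, \eps^R .
\]
This expansion holds because setting \(\eps_i \mapsto 0\) for \(i \notin T\) is an algebra map \(A_r \to A_r\). It sends \(p(y + \sum_i \eps_i w_i) = \sum_R D(p;y;w_R)\eps^R\) to the displayed sum, so the evaluation at \(T\) keeps exactly the faces \(R \subseteq T\).

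Substituting this into the definition of \(\Delta\) (Definition~\ref{def:forward-diff}) gives
\[
  \Delta(p; y; \eps_\bullet w_\bullet) = \sum_{T \subseteq [r]} (-1)^{r-|T|} \sum_{R \subseteq T} D(p;y;w_R)\,\eps^R .
\]
We then exchange the order of summation. The coefficient of \(\eps^R\) becomes \(\sum_{R \subseteq T \subseteq [r]} (-1)^{r-|T|} = (1-1)^{r-|R|}\), which equals \(1\) when \(R = [r]\) and \(0\) otherwise. This is exactly the Boolean Möbius inversion of Proposition~\ref{prop:moebius-inversion} applied to the cube \(R \mapsto D(p;y;w_R)\eps^R\). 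Only the top face survives, giving \(D(p; y; w_1, \dots, w_r)\,\eps^{[r]}\), which is the first claim.

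For the ``in particular'' statement, we read the identity as a polynomial statement in scalar variables. Treat \(\Delta(p; y; s_1 w_1, \dots, s_r w_r)\) as a polynomial in \(s_1, \dots, s_r\) over \(\ik\). The substitution \(s_i \mapsto \eps_i\) is the ring map \(\ik[s]\to A_r\), which kills every monomial in which some \(s_i\) has degree \(\geq 2\). Two consequences follow:
\begin{itemize}
\item Monomials missing some \(s_i\) have vanishing coefficient, because by the first part no face \(\eps^R\) with \(R \neq [r]\) appears.
\item The coefficient of \(s_1 \cdots s_r\) equals \(D(p;y;w_\bullet)\).
\end{itemize}
The multilinear part of \(\Delta(p;y;w_\bullet)\) in \((w_1,\dots,w_r)\) means this coefficient of \(s_1\cdots s_r\), i.e.\ the part homogeneous of degree one in each \(w_i\). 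Applying the same argument with each \(w_i\) replaced by \(s_i w_i\) identifies it with \(D(p;y;w_\bullet)\), which is multilinear in the \(w_i\).

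The main obstacle is making ``multilinear part'' precise over a general ring \(\ik\), where polynomial functions and formal polynomials may differ. I would handle it by working formally in \(\ik[s_1,\dots,s_r]\), or with \(w_i\) as generic indeterminate vectors, so that extracting the \(s_1\cdots s_r\) coefficient is well defined. The specialization \(s_i \mapsto \eps_i\) is then an honest ring homomorphism, which is all the argument needs.
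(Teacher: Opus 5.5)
Your proof is correct and follows the paper's route: the paper simply cites the $A_k$ Möbius display of Proposition~\ref{prop:poly-taylor-duality}, which you reprove via the face-restriction maps $\eps_i \mapsto 0$ and the sign sum $(1-1)^{r-|R|}$. It then reads off the multilinear part by the substitution $w_j \mapsto \eps_j w_j$, exactly as your $s_i \mapsto \eps_i$ specialization does. One small slip, harmless since you never use it: your first bullet only controls square-free monomials, because monomials like $s_1^2$ are killed by $\eps_1^2=0$ and the first part says nothing about them; that every monomial contains each $s_i$ follows instead from $\Delta$ vanishing when one direction is zero.
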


\begin{proof}

This is the \(A_r\) Möbius display of \ref{prop:poly-taylor-duality}
applied at \(y\) with directions \(w_\bullet\): the left side is the
alternating sum
\(\sum_{T \subseteq [r]} (-1)^{r-|T|}\, p(y + \sum_{j \in T} \eps_j w_j)\),
and the \(\eps^{[r]}\)-coefficient of \(p(y + \sum_j \eps_j w_j)\) is
\(D(p; y; w_\bullet)\) by definition. Substituting
\(w_j \mapsto \eps_j w_j\) retains exactly the multilinear monomials of
\(\Delta(p; y; w_\bullet)\), since \(\eps_j^2 = 0\).

\end{proof}

\begin{theorem}[Faà di Bruno deformation]

\label{thm:fdb-deformation} Let \(q: \ik^e \to \ik^d\) and
\(p: \ik^d \to \ik\) be polynomials, \(x \in \ik^e\),
\(v_1, \dots, v_k \in \ik^e\), and \(y = q(x)\). Write \(c_S(t)\) for
the \(x^S\)-coefficient of \((p \circ q)(x + \sum_i v_i x_i) \in C_k\).
Then

\[
  c_S(t) = \frac{1}{t^{|S|}} \sum_{\substack{H \subseteq \KP_+(S) \\\\ \bigcup H = S}} \Delta(p;\, y;\, (\Delta(q;\, x;\, (tv_i)_{i \in T}))_{T \in H}), \qquad c_S(0) = \sum_{\pi \in \Part(S)} D(p;\, y;\, (D(q;\, x;\, v_B))_{B \in \pi}).
\]

Each covering summand is divisible by \(t^{\mathrm{wt}(H)}\) (since
\(\Delta(q; x; (tv_i)_{i \in T})\) is divisible by \(t^{|T|}\)), so
\(c_S \in \ik[t]\). At \(t = 0\), only the coverings with
\(\mathrm{wt}(H) = |S|\), namely the partitions, contribute.

\end{theorem}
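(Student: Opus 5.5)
The plan is to combine Proposition~\ref{prop:deformation-taylor} for the composite \(p \circ q\) with the discrete covering formula of Theorem~\ref{thm:dfdb}, and then to read off \(t = 0\) using the weight bound of Lemma~\ref{lem:weight-bound}.

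\emph{First formula.} Since \(C_k\) is free over \(\ik[t]\) with basis \(x^S\), Proposition~\ref{prop:deformation-taylor} applies to the polynomial map \(p \circ q\) (componentwise if needed). It gives \(t^{|S|} c_S(t) = \Delta(p\circ q; x; (tv_i)_{i\in S})\). Theorem~\ref{thm:dfdb} holds for arbitrary maps between abelian groups. We apply it to the abelian groups \(\ik[t]^e \to \ik[t]^d \to \ik[t]\), with directions \(tv_i\), \(i \in S\). This expands the right side as \(\sum_{H \in \Cov(S)} \Delta(p; y; (\Delta(q; x; (tv_i)_{i\in T}))_{T\in H})\). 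Dividing by \(t^{|S|}\), which is legitimate since \(\ik[t]\) embeds in \(\ik[t,t^{-1}]\), yields the displayed expression for \(c_S(t)\).

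\emph{Divisibility.} By Proposition~\ref{prop:deformation-taylor} applied to \(q\), write \(\Delta(q; x; (tv_i)_{i\in T}) = t^{|T|} w_T(t)\) with \(w_T(t) = c^q_T(t) \in \ik[t]^d\) and \(w_T(0) = D(q; x; v_T)\). Next we need a lemma: for a polynomial \(p\) and polynomial direction vectors, \(\Delta(p; y; t^{a_1} w_1, \dots, t^{a_r} w_r)\) is divisible by \(t^{a_1+\cdots+a_r}\). This holds because \(\Delta(p;y;\cdot)\) is a polynomial in the directions that vanishes whenever any single direction is zero: the terms \(T\ni j\) and \(T\setminus\{j\}\) cancel. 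Hence every monomial in it involves each \(w_j\) at least once. Expanding, the minimal-degree part is exactly the multilinear part, which by Lemma~\ref{lem:multilinear-part} is \(D(p; y; w_1,\dots,w_r)\). So the \(H\)-summand equals \(t^{\mathrm{wt}(H)}\bigl(D(p; y; (w_T(t))_{T\in H}) + t\cdot(\dots)\bigr)\), and \(c_S(t) = \sum_H t^{\mathrm{wt}(H)-|S|}(\cdots)\) with polynomial brackets.

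\emph{Specialization.} Lemma~\ref{lem:weight-bound} (the case \(r=2\)) gives \(\mathrm{wt}(H) \geq |S|\), with equality exactly for partitions. So at \(t=0\) the non-partition summands vanish. A partition summand contributes \(D(p; y; (w_T(0))_{T\in\pi}) = D(p; y; (D(q;x;v_B))_{B\in\pi})\). As a cross-check, this matches Theorem~\ref{thm:cubical-fdb} via the fiber \(C_k/(t) \cong A_k\) of Lemma~\ref{lem:flatness}.

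The main obstacle is the monomial-degree argument. It requires that every monomial of \(\Delta(p;y;w_\bullet)\) contain every direction, and that its lowest-degree component be precisely the multilinear one. We also need that substituting \(t^{|T|}w_T(t)\) with \(w_T\) itself depending on \(t\) does not spoil this. We handle this by working in \(\ik[t]\) throughout, so \(t=0\) is a ring homomorphism applied after factoring out \(t^{\mathrm{wt}(H)}\).
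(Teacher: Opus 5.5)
Your proposal is correct and follows the paper's proof essentially step for step. You use Proposition~\ref{prop:deformation-taylor} for the composite, then the covering formula of Theorem~\ref{thm:dfdb} over \(\ik[t]\), the factorization \(\Delta(q;x;(tv_i)_{i\in T}) = t^{|T|}\tilde q_T(t)\), the hyperplane-vanishing argument to pull out \(t^{\mathrm{wt}(H)}\), the weight bound to isolate partitions, and Lemma~\ref{lem:multilinear-part} to identify the surviving terms; the only cosmetic difference is that you apply the multilinear-part step to every covering summand rather than only to partitions, which changes nothing.
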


\begin{proof}

By \ref{prop:deformation-taylor} applied to the composite (extending
scalars to \(\ik[t]\)),
\(c_S(t) = t^{-|S|}\, \Delta(p \circ q;\, x;\, (tv_i)_{i \in S})\). The
discrete covering formula \ref{thm:dfdb}, applied over \(\ik[t]\) with
increments \((tv_i)_{i \in S} \in \ik[t]^e\), expands this as
\(t^{-|S|} \sum_{H \in \Cov(S)} \Delta(p;\, y;\, (\Delta(q;\, x;\, (tv_i)_{i \in T}))_{T \in H})\).
Each inner increment is divisible by \(t^{|T|}\) by
\ref{prop:deformation-taylor}; write
\(\Delta(q; x; (tv)_T) = t^{|T|} \tilde{q}_T(t)\) with
\(\tilde{q}_T \in \ik[t]^d\) and \(\tilde{q}_T(0) = D(q; x; v_T)\). As a
polynomial in \((w_1, \dots, w_r) \in (\ik^d)^r\), the mixed difference
\(\Delta(p; y; w_1, \dots, w_r)\) vanishes on every hyperplane
\(w_j = 0\), so each of its monomials contains at least one factor from
each \(w_j\); substituting \(w_j = t^{|T_j|} \tilde{q}_{T_j}(t)\)
therefore extracts \(\prod_{T \in H} t^{|T|} = t^{\mathrm{wt}(H)}\).
Hence \(c_S(t) \in \ik[t]\). At \(t = 0\), terms with
\(\mathrm{wt}(H) > |S|\) vanish; the remaining terms have
\(\mathrm{wt}(H) = |S|\), i.e.~\(H\) is a partition
\ref{lem:weight-bound}. For a partition \(\pi\) with blocks
\(B_1, \dots, B_r\), the \(\pi\)-summand is
\(\Delta(p;\, y;\, (t^{|B_j|} \tilde{q}_{B_j}(t))_j)\); by the
multilinear-part lemma \ref{lem:multilinear-part}, the multilinear
monomials contribute
\(t^{\sum |B_j|} D(p;\, y;\, (\tilde{q}_{B_j}(t))_j) = t^{|S|} D(p;\, y;\, (\tilde{q}_{B_j}(t))_j)\),
while all higher monomials contribute above order \(|S|\). Dividing by
\(t^{|S|}\) and setting \(t = 0\) gives
\(D(p;\, y;\, (D(q;\, x;\, v_B))_{B \in \pi})\), since
\(\tilde{q}_B(0) = D(q; x; v_B)\) \ref{prop:deformation-taylor}.

\end{proof}

\section{Fréchet Möbius calculus}\label{fruxe9chet-muxf6bius-calculus}

The only analytic input needed to pass from polynomial maps to \(C^n\)
maps between Banach spaces is that a \(C^n\) composite is computed by
its Taylor polynomials up to order \(n\); this is
\ref{thm:taylor-composition}, proved in \citep{Hartmann2025FDB}.
Everything else is the polynomial calculus of the previous section
applied to jets.

The polynomial calculus of the previous section extends verbatim to
polynomial maps between \(\ik\)-modules, and in particular to continuous
polynomial maps between Banach spaces: the proofs use only base change
along \(\ik \to A_k^\nu\), which extends canonically to symmetric
multilinear maps, and the branch maps \(\rho_\beta\) act on the
\(\eps\)-variables only, never on coordinates of the target.

Let \(X_0, \dots, X_m\) be Banach spaces and let
\(f_r: X_{r-1} \to X_r\) be \(C^n\) near the relevant basepoints. Put
\(x_0 = x\) and \(x_r = f_r(x_{r-1})\). We write \(T^n(f_r; x_{r-1})\)
for the order-\(n\) Taylor polynomial of \(f_r\) at \(x_{r-1}\),
regarded as a polynomial map on \(X_{r-1}\). For a \(C^n\) map \(f\),
the mixed directional derivative
\(D^\alpha(f; x; v_\bullet) := D^{|\alpha|}f(x)[v_1^{\otimes \alpha_1}, \dots, v_k^{\otimes \alpha_k}]\)
agrees with \(D^\alpha(T^n(f; x); x; v_\bullet)\) for
\(|\alpha| \leq n\); the iterated differential \(D^\kappa\) for
\(\kappa \in \KM_+^m(k)\) is defined by the same recursion as
\ref{def:poly-iterated-diff}, applied to the Taylor jets. Every
\(\kappa\) with \(\Part_m(\gamma, \kappa) \neq 0\) satisfies
\(\kappa \vdash \gamma\), so the sum in \ref{thm:frechet-fdb} is finite
and only derivatives of order \(\leq |\gamma| \leq n\) occur.

\begin{proposition}[Taylor composition for Fréchet maps]

\label{thm:taylor-composition} For \(C^n\) maps between Banach spaces,

\[
  T^n(f_m \circ \cdots \circ f_1;\, x) = \pi_{\leq n}(T^n(f_m;\, x_{m-1}) \circ \cdots \circ T^n(f_1;\, x_0)),
\]

where \(\pi_{\leq n}\) denotes truncation to total degree at most \(n\).

\end{proposition}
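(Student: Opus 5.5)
The plan is to reduce to the case $m = 2$ by induction on $m$, and to prove the two-map case with the Taylor remainder in little-$o$ form together with the expansion of continuous polynomial maps into homogeneous symmetric multilinear components.

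Write $P_r := T^n(f_r; x_{r-1})$. By Taylor's theorem for $C^n$ maps, $f_r(x_{r-1} + h) = P_r(x_{r-1} + h) + R_r(h)$ with $\|R_r(h)\| = o(\|h\|^n)$ as $h \to 0$. The characterization of $T^n(F; x)$ that we use is uniqueness: it is the unique polynomial of degree $\leq n$ in $h$ such that $F(x+h) - T^n(F;x)(x+h) = o(\|h\|^n)$. Uniqueness follows because a continuous polynomial $Q(h) = \sum_{j \leq n} Q_j(h)$ with $Q(h) = o(\|h\|^n)$ vanishes identically. To see this, substitute $h = s h_0$ and compare coefficients of $s^j$ as $s \to 0$, so each $Q_j(h_0) = 0$. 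It therefore suffices to show that
\[
f_m \circ \cdots \circ f_1(x+h) - \pi_{\leq n}\bigl(P_m \circ \cdots \circ P_1\bigr)(x+h) = o(\|h\|^n).
\]

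For the two-map step, take $F = f_2 \circ f_1$ and set $k(h) := f_1(x+h) - x_1 = (P_1(x+h) - x_1) + R_1(h)$. Then $k(h) = O(\|h\|)$, since $P_1 - x_1$ has no constant term. Write $f_2(x_1 + k) = P_2(x_1 + k) + R_2(k)$, and note that $R_2(k(h)) = o(\|k\|^n) = o(\|h\|^n)$. Next, replace $k(h)$ by $\ell(h) := P_1(x+h) - x_1$ inside $P_2$. Since $P_2(x_1 + \cdot)$ is a polynomial, it is locally Lipschitz with a constant bounded near $0$, so
\[
\|P_2(x_1 + k) - P_2(x_1 + \ell)\| \leq C\,\|R_1(h)\| = o(\|h\|^n).
\]
Finally, $P_2(x_1 + \ell(h))$ is a polynomial in $h$. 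Its homogeneous components of degree $> n$ are $O(\|h\|^{n+1})$ for small $h$, so discarding them via $\pi_{\leq n}$ also costs only $o(\|h\|^n)$. Uniqueness then gives the claim for $m = 2$.

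For general $m$ we induct. Let $G = f_{m-1} \circ \cdots \circ f_1$, which is $C^n$ near $x$. By the inductive hypothesis, $T^n(G; x) = \pi_{\leq n}(P_{m-1} \circ \cdots \circ P_1)$, and the two-map case gives $T^n(f_m \circ G; x) = \pi_{\leq n}\bigl(P_m \circ T^n(G;x)\bigr)$. It remains to show that
\[
\pi_{\leq n}\bigl(P_m \circ \pi_{\leq n}(Q)\bigr) = \pi_{\leq n}(P_m \circ Q)
\]
for polynomials $Q$ in $h$ with constant term $x_{m-1}$. This is purely algebraic: $Q - \pi_{\leq n} Q$ has only components of degree $\geq n+1$. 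Because $Q - x_{m-1}$ has no constant term, every term of $P_m(x_{m-1} + \cdot)$ that involves at least one factor of degree $\geq n+1$ has total degree $\geq n+1$. Expanding by multilinearity of the homogeneous components of $P_m$ makes this precise.

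The main obstacle is the analytic bookkeeping rather than any combinatorics. Three points need care: Taylor's theorem with an $o(\|h\|^n)$ remainder for $C^n$ maps between Banach spaces; the uniform local Lipschitz bound for continuous polynomial maps, which follows from boundedness of the continuous symmetric multilinear maps $D^j f_2(x_1)/j!$ and the polarization-free estimate $\|A(a^{j}) - A(b^{j})\| \leq j\|A\|\max(\|a\|,\|b\|)^{j-1}\|a-b\|$; and the composition estimate $o(\|k(h)\|^n) = o(\|h\|^n)$, which uses $k(h) = O(\|h\|)$. I would first establish the uniqueness and degree-truncation lemmas, since both the base case and the inductive step then reduce to them.
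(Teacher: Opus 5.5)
Your proposal is correct and follows the paper's route. Both split each $f_r$ into its Taylor polynomial $P_r$ plus a Peano remainder $R_r(h) = o(\|h\|^n)$, show that every remainder contribution to the composite is $o(\|h\|^n)$, and identify the jet by uniqueness of the order-$n$ Taylor polynomial. The differences are organizational: you handle $m = 2$ and induct via the algebraic identity $\pi_{\leq n}(P_m \circ \pi_{\leq n}(Q)) = \pi_{\leq n}(P_m \circ Q)$ rather than substituting all $m$ splittings at once, and you supply the uniqueness, Lipschitz and truncation estimates that the paper only sketches and defers to an external reference.
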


\begin{proof}

Write \(f_r = P_r + R_r\) where \(P_r = T^n(f_r; x_{r-1})\) is the
Taylor polynomial and \(R_r\) is the Peano residual with
\(R_r(h) = o(\|h\|^n)\). Substituting into the composite and expanding,
every term containing at least one factor of \(R_r\) is \(o(\|h\|^n)\),
so the Taylor jet of the composite up to order \(n\) agrees with the
truncated composite of the Taylor polynomials. Details are given in
\citep{Hartmann2025FDB}.

\end{proof}

Thus the Taylor--Möbius duality and Faà di Bruno formulas of the
previous section, which are identities for polynomial maps proved via
the infinitesimal grid in \(A_k^\nu\), apply directly to the Taylor jets
of \(C^n\) maps.

\begin{theorem}[Fréchet iterated Faà di Bruno / recursive Constantine--Savits]

\label{thm:frechet-fdb} Let \(X_0, \dots, X_m\) be Banach spaces, let
\(f_r: X_{r-1} \to X_r\) be \(C^n\) near \(x_{r-1}\), and put
\(x_0 = x\), \(x_r = f_r(x_{r-1})\), \(z = x_m\). Fix directions
\(v_1, \dots, v_k \in X_0\) and let \(\gamma \in \IN_0^k\) with
\(1 \leq |\gamma| \leq n\).

\begin{enumerate}
\def\labelenumi{\arabic{enumi})}
\tightlist
\item
  \emph{Taylor composition:}
\end{enumerate}

\[
  (f_m \circ \cdots \circ f_1)(x + {\textstyle\sum_i} t_i v_i) = z + \sum_{\substack{0 < |\gamma| \leq n \\\\ \kappa \in \KM_+^m(k)}} \frac{t^\gamma}{\gamma!}\, \Part_m(\gamma, \kappa)\, D^\kappa(f_1, \dots, f_m;\, x;\, v_\bullet) + o(|t|^n).
\]

\begin{enumerate}
\def\labelenumi{\arabic{enumi})}
\setcounter{enumi}{1}
\tightlist
\item
  \emph{Faà di Bruno:}
\end{enumerate}

\[
  D^\gamma(f_m \circ \cdots \circ f_1;\, x;\, v_\bullet) = \sum_{\kappa \in \KM_+^m(k)} \Part_m(\gamma, \kappa)\, D^\kappa(f_1, \dots, f_m;\, x;\, v_\bullet).
\]

Here \(D^\kappa\) is the recursive iterated differential
\ref{def:poly-iterated-diff} and \(\Part_m(\gamma, \kappa)\) is the
partition grouping coefficient \ref{prop:poly-partition-recursion}.

\end{theorem}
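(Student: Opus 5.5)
Reduce to the polynomial case via Taylor jets and invoke Proposition \ref{prop:poly-iterated-fdb} together with Proposition \ref{prop:deformation-taylor}'s analogue in $A_k^\nu$. Set $P_r := T^n(f_r; x_{r-1})$, a continuous polynomial map $X_{r-1} \to X_r$ with $P_r(x_{r-1}) = x_r$. By the remark preceding \ref{thm:taylor-composition}, the polynomial calculus of the previous section holds verbatim for continuous polynomial maps between Banach spaces, since only base change along $\ik \to A_k^\nu$ is used. So Proposition \ref{prop:poly-iterated-fdb}(2) gives
\[
  D^\gamma(P_m \circ \cdots \circ P_1;\, x;\, v_\bullet) = \sum_{\kappa} \Part_m(\gamma, \kappa)\, D^\kappa(P_1, \dots, P_m;\, x;\, v_\bullet).
\]

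\textbf{Transfer to the $f_r$.} Two identifications are needed.

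First, on the left side, $D^\gamma$ of a map at $x$ with $|\gamma| \leq n$ depends only on its degree-$\leq n$ jet. By Proposition \ref{thm:taylor-composition}, the jet of $f_m \circ \cdots \circ f_1$ equals $\pi_{\leq n}(P_m \circ \cdots \circ P_1)$. Hence $D^\gamma(f_m \circ \cdots \circ f_1; x; v_\bullet) = D^\gamma(P_m \circ \cdots \circ P_1; x; v_\bullet)$, because truncation does not change derivatives of order $\leq n$.

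Second, on the right side, only $\kappa \vdash \gamma$ contribute by Proposition \ref{prop:poly-partition-recursion}. For these, every outer differential $D(P_r; x_{r-1}; \ldots)$ appearing in the recursion \ref{def:poly-iterated-diff} has order at most $|\gamma| \leq n$: the number of directions at each level is bounded by the leaf weight. Likewise, each level-1 term $D^\alpha(P_1; x; v_\bullet)$ has $|\alpha| \leq |\gamma|$. Since $D^j P_r(x_{r-1}) = D^j f_r(x_{r-1})$ for $j \leq n$, and the basepoints agree ($x_r = P_r(x_{r-1})$), induction on $m$ gives $D^\kappa(P_1, \dots, P_m) = D^\kappa(f_1, \dots, f_m)$. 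This proves part 2.

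\textbf{Part 1.} Apply the Fréchet Taylor theorem to $F = f_m \circ \cdots \circ f_1$, which is $C^n$ near $x$, along $h = \sum_i t_i v_i$. This gives $F(x+h) = z + \sum_{1 \leq j \leq n} \frac{1}{j!} D^j F(x)[h^{\otimes j}] + o(|t|^n)$. Expanding $h^{\otimes j}$ multinomially by symmetry gives $\sum_{|\gamma| = j} \frac{t^\gamma}{\gamma!} D^\gamma(F; x; v_\bullet)$. Substituting part 2 then yields part 1. Equivalently, one can read off the $A_k^\nu$ expansion of Proposition \ref{prop:binomial-taylor-duality} for the composite of jets.

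\textbf{Main obstacle.} The delicate step is the order-bookkeeping in the second identification: checking that, along $\kappa \vdash \gamma$, no derivative of $f_r$ of order exceeding $n$ enters the recursive $D^\kappa$. I would prove by induction on $m$ that the number of directions fed into $f_r$ is at most $\mathrm{wt}$ of the level-$(m-r+1)$ support, hence $\leq |\lf(\kappa)| = |\gamma|$. I would also verify that the polynomial proofs (branch maps, multinomial expansion) are genuinely valid for Banach-valued continuous polynomials over $\IR$, where $\IQ \subseteq \ik$ holds.
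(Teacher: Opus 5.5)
Your proposal is correct and follows the paper's route. You reduce to the Taylor jets $P_r = T^n(f_r; x_{r-1})$ via Proposition \ref{thm:taylor-composition}, apply Proposition \ref{prop:poly-iterated-fdb} to the jet composite to get part 2, and get part 1 by expanding in $t$ and inserting part 2; your order bookkeeping for $D^\kappa$ along $\kappa \vdash \gamma$, and your use of the Peano--Taylor theorem for the composite instead of expanding the truncated jet composite via Proposition \ref{prop:binomial-taylor-duality}, are just more explicit versions of the same steps.
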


\begin{proof}

By Taylor composition \ref{thm:taylor-composition},
\(F(x + h) = \pi_{\leq n}(P_m \circ \cdots \circ P_1)(x + h) + o(\|h\|^n)\),
where \(P_r = T^n(f_r; x_{r-1})\) are polynomial maps between Banach
spaces. The polynomial Faà di Bruno \ref{prop:poly-iterated-fdb}, which
applies to polynomial maps between modules, gives 2) for the composite
of the Taylor jets. For 1), restrict to \(h = \sum_i t_i v_i\) with
\(\|h\| \leq C|t|\), so \(o(\|h\|^n) = o(|t|^n)\); expanding the
truncated polynomial composite in \(t\) by binomial Taylor--Möbius
duality \ref{prop:binomial-taylor-duality} and inserting 2) gives the
stated formula.

\end{proof}

\begin{corollary}[Multivariate Taylor formula, \citep{LangRFA}]

\label{cor:multivariate-taylor} For a single \(C^n\) map \(f: X \to Y\)
between Banach spaces, \(x \in X\), and \(v_1, \dots, v_k \in X\):

\[
  f(x + {\textstyle\sum_i} t_i v_i) = f(x) + \sum_{1 \leq |\alpha| \leq n} \frac{t^\alpha}{\alpha!}\, D^\alpha(f;\, x;\, v_\bullet) + o(|t|^n).
\]

\end{corollary}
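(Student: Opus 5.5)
The statement is the case \(m = 1\) of \ref{thm:frechet-fdb}, part 1. I will derive it directly, since for \(m = 1\) the iterated machinery collapses. With \(m = 1\), the index set \(\KM_+^1(k) = \KM_+(k)\) consists of nonzero multi-indices \(\alpha\), and \(\Part_1(\gamma, \alpha) = \delta_{\gamma,\alpha}\) by \ref{prop:poly-partition-recursion}. Likewise \(D^\alpha(f_1; x; v_\bullet) = D(f_1; x; v_\bullet^{\times \alpha})\) by \ref{def:poly-iterated-diff}. The double sum over \((\gamma, \kappa)\) therefore reduces to a single sum over \(1 \leq |\alpha| \leq n\), with coefficient \(t^\alpha/\alpha!\) and term \(D^\alpha(f; x; v_\bullet)\). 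This is exactly the claimed formula, since \(z = f(x)\).

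To keep the argument self-contained and avoid circularity with the analytic input, I would also give the direct route.

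\textbf{Step 1: reduce to the Taylor polynomial.} Write \(f = P + R\) with \(P = T^n(f; x)\) and Peano remainder \(R(x + h) = o(\|h\|^n)\). Restrict to \(h = \sum_i t_i v_i\). Since \(\|h\| \leq (\max_i \|v_i\|)\, |t|\), we get \(R(x + h) = o(|t|^n)\). This is the same estimate used in the proof of \ref{thm:frechet-fdb}.

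\textbf{Step 2: expand \(P\) exactly.} \(P\) is a continuous polynomial map of degree \(\leq n\), so \(P(x + \sum_i t_i v_i)\) is a polynomial in \(t\). To identify its coefficients, I would work in \(A_k^\nu\) with \(\nu = (n, \dots, n)\), which carries no truncation loss up to total degree \(n\). Apply the binomial Taylor--Möbius duality \ref{prop:binomial-taylor-duality} to \(P\), using \(\IQ \subseteq \IR\) so the factorials are harmless. This gives \(P(x + \sum_i \eps_i v_i) = \sum_\alpha \frac{1}{\alpha!} D^\alpha(P; x; v_\bullet)\, \eps^\alpha\). Substituting \(\eps_i \mapsto t_i\) is legitimate because \(P\) has degree \(\leq n \leq \nu_i\), so no monomial is killed by the relations. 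Thus the \(t^\alpha\)-coefficient of \(P(x + \sum_i t_i v_i)\) is \(D^\alpha(P; x; v_\bullet)/\alpha!\), and it vanishes for \(|\alpha| > n\).

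\textbf{Step 3: pass back to \(f\).} Use \(D^\alpha(P; x; v_\bullet) = D^{|\alpha|} f(x)[v_1^{\otimes\alpha_1}, \dots]\) for \(|\alpha| \leq n\), as stated before \ref{thm:taylor-composition}. The \(\alpha = 0\) term is \(f(x)\). Combining with Step 1 gives the formula.

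\textbf{Main obstacle.} The only delicate point is Step 2: justifying that the formal \(\eps\)-expansion computes the honest coefficients of \(t^\alpha\). The cleanest way is to note that for a polynomial, both sides are polynomial identities in \(t\) obtained by the same multinomial expansion of the homogeneous components \(D^j f(x)[h^{\otimes j}]/j!\). Expanding \(h^{\otimes j}\) multilinearly in the \(t_i v_i\) and grouping by profile yields the multinomial count \(j!/\alpha!\), directly matching \(1/\alpha!\).
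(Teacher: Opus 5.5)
Your proposal is correct and matches the paper, which gives no separate proof but states this corollary as the case $m = 1$ of Theorem~\ref{thm:frechet-fdb}(1); there $\Part_1(\gamma,\alpha)=\delta_{\gamma,\alpha}$ collapses the double sum exactly as in your first paragraph. Your ``direct route'' is that theorem's proof unwound for $m=1$: the Peano remainder for $T^n(f;x)$, followed by the binomial Taylor--M\"obius (multinomial) expansion of the polynomial part. It therefore rests on the same analytic input rather than avoiding it, which is fine.
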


\begin{corollary}[Constantine--Savits, \citep{CS1996}]

\label{cor:cs} For two \(C^n\) maps \(f_1: X_0 \to X_1\),
\(f_2: X_1 \to X_2\) between Banach spaces, \(x \in X_0\),
\(x_1 = f_1(x)\), and \(\gamma \in \IN_0^k\) with \(|\gamma| \leq n\):

\[
  D(f_2 \circ f_1;\, x;\, v_\bullet^{\times \gamma}) = \sum_{\kappa \vdash \gamma} \frac{\gamma!}{\kappa!\, \prod_\alpha (\alpha!)^{\kappa(\alpha)}}\, D(f_2;\, x_1;\, (D(f_1;\, x;\, v_\bullet^{\times \alpha}))_\alpha^{\times \kappa(\alpha)}).
\]

\end{corollary}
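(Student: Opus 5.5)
The plan is to obtain the Constantine--Savits formula as the special case \(m = 2\) of the Fréchet iterated Faà di Bruno theorem \ref{thm:frechet-fdb}. The work then splits into two parts: make the coefficient \(\Part_2(\gamma, \kappa)\) explicit, and check that the iterated differential \(D^\kappa(f_1, f_2; x; v_\bullet)\) is the expression on the right-hand side of the corollary.

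\emph{Step 1: the differential side.} For \(m = 2\), \(\kappa \in \KM_+^2(k)\) is a multiset of nonzero multi-indices \(\alpha \in \KM_+(k)\). By definition \ref{def:poly-iterated-diff}, applied to the Taylor jets,
\[
  D^\kappa(f_1, f_2; x; v_\bullet) = D\bigl(f_2;\, x_1;\, (D^\alpha(f_1; x; v_\bullet)^{\times \kappa(\alpha)})_{\alpha \in \supp(\kappa)}\bigr).
\]
By definition, \(D^\alpha(f_1; x; v_\bullet) = D(f_1; x; v_\bullet^{\times \alpha})\). Using the remark preceding \ref{thm:taylor-composition}, both inner and outer derivatives of order at most \(|\gamma| \leq n\) agree with those of \(f_1\) and \(f_2\) themselves, not just with those of their jets. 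The left-hand side \(D^\gamma(f_2 \circ f_1; x; v_\bullet)\) is by definition \(D(f_2 \circ f_1; x; v_\bullet^{\times \gamma})\), so it matches the left-hand side of the corollary.

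\emph{Step 2: the coefficients.} By \ref{prop:poly-partition-recursion}, \(\Part_2(\gamma, \kappa)\) vanishes unless \(\kappa \vdash \gamma\), that is, unless \(\sum_\alpha \kappa(\alpha)\, \alpha = \gamma\). This restricts the sum to multi-index partitions of \(\gamma\). Specializing the level recursion at \(m = 2\) with \(\Part_1(\alpha, \beta) = \delta_{\alpha, \beta}\) gives the closed form \(\Part_2(\gamma, \kappa) = \gamma!/(\kappa!\, \prod_\alpha (\alpha!)^{\kappa(\alpha)})\). As a cross-check, this is the count of set partitions of the slot set \(S(\gamma)\) with profile \(\kappa\): distribute the \(\gamma!\) labelled slots into ordered blocks of types \(\alpha\), divide by the internal orderings \(\alpha!\) within each block, and divide by \(\kappa!\) for permutations among blocks of the same type.

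\emph{Step 3: degenerate cases.} The case \(\gamma = 0\) is excluded by hypothesis in \ref{thm:frechet-fdb}. If \(|\gamma| = 0\) is intended to be allowed here, it is the trivial identity \(f_2(f_1(x)) = x_2\), reading the sum over the single empty partition. Substituting Steps 1 and 2 into part 2) of \ref{thm:frechet-fdb} yields the displayed formula.

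The only point needing care is the identification in Step 1, namely that derivatives of the jets coincide with the Fréchet derivatives up to order \(n\). This is exactly what \ref{thm:taylor-composition} and the accompanying remark supply, so I expect no genuine obstacle.
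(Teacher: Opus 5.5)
Your proposal is correct and matches the paper's implicit argument. The corollary is the \(m = 2\) case of \ref{thm:frechet-fdb}, with \(\Part_2(\gamma,\kappa) = \gamma!/(\kappa!\,\prod_\alpha (\alpha!)^{\kappa(\alpha)})\) read off from the closed case of \ref{prop:poly-partition-recursion}, and with the jet derivatives of order \(\leq |\gamma| \leq n\) identified with the Fréchet derivatives. Your remark on \(\gamma = 0\) is also right: that case holds only if the sum includes the empty multiset \(\kappa = 0\), which \(\KM_+^2(k)\) excludes, an edge case the paper's statement glosses over.
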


\begin{corollary}[Univariate Faà di Bruno]

\label{cor:univariate-fdb} For \(C^n\) functions \(f, g: \IR \to \IR\),
the Constantine--Savits formula \ref{cor:cs} with \(k = 1\) and
\(\gamma = n\) gives:

\[
  (f \circ g)^{(n)}(x) = \sum_{\substack{k_1 + 2k_2 + \cdots + nk_n = n \\\\ k_j \geq 0}} \frac{n!}{k_1!\, k_2! \cdots k_n!}\, f^{(k_1 + \cdots + k_n)}(g(x)) \prod_{j=1}^n \left(\frac{g^{(j)}(x)}{j!}\right)^{k_j}.
\]

This is the classical Faà di Bruno formula in Bell polynomial form.

\end{corollary}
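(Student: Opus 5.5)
The univariate formula will be read off from Corollary~\ref{cor:cs} with $k = 1$, $v_1 = 1 \in \IR$ and $\gamma = n$, after unpacking the combinatorics. No new analytic input is needed: for $C^n$ maps $\IR \to \IR$ we have $D(f_1; x; 1^{\times j}) = g^{(j)}(x)$ for $j \leq n$, since the multilinear $D^j g(x)[1,\dots,1]$ is the ordinary $j$-th derivative. Likewise $D(f_2; g(x); w_1, \dots, w_r) = f^{(r)}(g(x))\, w_1 \cdots w_r$ for scalars $w_i$, because an $r$-linear form on $\IR^r$ is multiplication by its value at $(1,\dots,1)$.

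First I will parametrize the index set. Multi-indices on $S = [1]$ are nonnegative integers, so a multi-index partition $\kappa \vdash n$ at level $m = 2$ is a multiset of positive integers $j$ with multiplicities $\kappa(j) =: k_j$ and $\sum_j j\, k_j = n$. This set is in bijection with the tuples $(k_1, \dots, k_n)$ in the stated sum, since $\kappa(j) = 0$ for $j > n$. In the notation of Corollary~\ref{cor:cs}, $\kappa! = \prod_j k_j!$ and $\prod_\alpha (\alpha!)^{\kappa(\alpha)} = \prod_j (j!)^{k_j}$, so the coefficient is $n! / \bigl(\prod_j k_j!\, \prod_j (j!)^{k_j}\bigr)$.

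Next I substitute into the summand. The outer differential receives $r = \sum_j k_j$ directions, each $g^{(j)}(x)$ repeated $k_j$ times. By multilinearity it equals $f^{(k_1 + \cdots + k_n)}(g(x)) \prod_j g^{(j)}(x)^{k_j}$. Absorbing $(j!)^{-k_j}$ into the product gives $\prod_j (g^{(j)}(x)/j!)^{k_j}$, with the remaining prefactor $n!/(k_1! \cdots k_n!)$. The left side is $D(f \circ g; x; 1^{\times n}) = (f\circ g)^{(n)}(x)$, which is valid because $f \circ g$ is $C^n$.

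The only step needing care is the identification of the multilinear $D^r$ with scalar derivatives times products. This is standard for one-dimensional Banach spaces, so I do not expect a real obstacle, beyond noting that the hypothesis $|\gamma| = n \leq n$ of Corollary~\ref{cor:cs} is satisfied.
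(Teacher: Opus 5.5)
Your proposal is correct and is exactly the specialization the paper intends: Corollary~\ref{cor:cs} with $k=1$, $v_1=1$, $\gamma=n$, identifying $\kappa \vdash n$ with tuples $(k_1,\dots,k_n)$ via $\kappa(j)=k_j$, and identifying the one-dimensional multilinear derivatives with products of ordinary derivatives. The paper states the corollary without further argument, and your bookkeeping of the coefficient $n!/\bigl(\prod_j k_j!\,\prod_j (j!)^{k_j}\bigr)$ supplies precisely the omitted routine details.
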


\section{Applications}\label{applications}

Each section exercises a different face of the covering calculus: a
combinatorial self-count, a degree bound for polynomial maps between
abelian groups, a composition law for Newton and Mahler coefficients,
and a fast algorithm for composing discrete jets. The final section on
product rules is a complement rather than an application: the same
Möbius mechanism run for products instead of composites.

\subsection{Self-enumeration}\label{self-enumeration}

\begin{proposition}[Covering counts]

\label{prop:cover-count} Let \(g: \IN_0 \to \IZ\), \(g(x) = 2^x - 1\)
and \(f: \IN_0 \to \IZ\), \(f(y) = 2^y\). Then

\[
  |\Cov(k)| = \sum_{j=0}^{k} (-1)^{k-j} \binom{k}{j}\, 2^{2^j - 1}.
\]

\end{proposition}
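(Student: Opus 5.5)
We apply the Boolean Faà di Bruno formula of \ref{thm:dfdb}(1) to the given maps, with basepoint \(x = 0 \in \IN_0\) and all directions \(u_1 = \dots = u_k = 1\). The domain is the monoid \(\IN_0\), which the discussion preceding \ref{def:forward-diff} permits. We then evaluate both sides explicitly. The choice of \(f\) and \(g\) is designed so that every term on the right-hand side equals \(1\), so the right-hand side simply counts \(\Cov(k)\).

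\textbf{Right-hand side.} First compute the inner increments. For nonempty \(T \subseteq [k]\),
\[
g_T = \Delta(g; 0; u_T) = \sum_{R \subseteq T} (-1)^{|T|-|R|} (2^{|R|} - 1).
\]
The constant \(-1\) is annihilated by the alternating sum because \(T \neq \emptyset\). The remaining part is \((2-1)^{|T|} = 1\), so \(g_T = 1\) for every nonempty \(T\). Next use \(y = g(0) = 0\). For any family \(H\) of \(r \ge 1\) directions all equal to \(1\),
\[
\Delta(f; 0; 1, \dots, 1) = \sum_{j} (-1)^{r-j}\binom{r}{j} 2^{j} = (2-1)^r = 1 .
\]
Hence every covering contributes exactly \(1\), and the right-hand side equals \(|\Cov(k)|\).

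\textbf{Left-hand side.} The composite is \((f\circ g)(x) = 2^{2^x - 1}\). This is an integer-valued map on \(\IN_0\), since \(2^x - 1 \ge 0\) there. By \ref{def:forward-diff}, with all directions equal to \(1\), we group the subsets \(S \subseteq [k]\) by \(j = |S|\):
\[
\Delta(f\circ g; 0; 1^{k}) = \sum_{j=0}^k (-1)^{k-j}\binom{k}{j}\, 2^{2^j - 1}.
\]
Equating the two sides gives the claim.

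\textbf{Main obstacle.} The only delicate point is legitimacy. The maps must be evaluated only at points of \(\IN_0\) for \(g\), and at points \(y + \sum g_T \in \IN_0\) for \(f\). That holds here, since \(g(\IN_0) \subseteq \IN_0\) and all \(g_T = 1\). Alternatively, one can extend \(g\) and \(f\) arbitrarily to maps \(\IZ \to \IZ\); every point appearing in the formula is a nonnegative integer, so the extension does not affect any value. Beyond this check, the computation is a direct application of the binomial theorem.
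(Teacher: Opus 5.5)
Your proposal is correct and is essentially the paper's proof. You apply the covering formula of Theorem~\ref{thm:dfdb} at $x = 0$ with unit directions, observe that every inner increment $\Delta(g;0;u_T)$ and every outer increment $\Delta(f;0;1,\dots,1)$ equals $1$ so that the right side counts $\Cov(k)$, and expand the left side as the alternating binomial (Newton) sum; your explicit check that all evaluation points stay in $\IN_0$ (or, alternatively, extending $f,g$ arbitrarily to $\IZ$) makes precise a domain point the paper leaves implicit.
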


\begin{proof}

Every increment equals \(1\): \(\Delta^j g(0) = 1\) for \(j \geq 1\) and
\(\Delta^p f(0) = 1\) for \(p \geq 1\) at \(y = g(0) = 0\). The covering
formula \ref{thm:dfdb} gives
\(\Delta^k(f \circ g)(0) = \sum_{H \in \Cov(k)} 1 = |\Cov(k)|\). The
Newton expansion of the left side gives the stated identity.

\end{proof}

The covering formula counts its own terms, the exact discrete parallel
of the classical fact that \(e^{e^x - 1}\) counts partitions (Bell
numbers): exponential maps are the group-like elements where every
increment is \(1\), and each formula enumerates its own index set.

\[
\begin{array}{l|rrrrr}
 & k=1 & k=2 & k=3 & k=4 & k=5 \\
\hline
|\Cov(k)| & 1 & 5 & 109 & 32{,}297 & 2{,}147{,}321{,}017 \\
|\Part(k)| = B_k & 1 & 2 & 5 & 15 & 52
\end{array}
\]

\subsection{Degree bound for polynomial
maps}\label{degree-bound-for-polynomial-maps}

A map \(g: X \to Y\) between abelian groups is \emph{polynomial of
degree \(\leq d\)} if \(\Delta^{d+1} g \equiv 0\) (see \citep{MO1934},
\citep{Leibman2002}).

\begin{corollary}

\label{cor:degree-bound} If \(g: X \to Y\) is polynomial of degree
\(\leq d\) and \(f: Y \to Z\) is polynomial of degree \(\leq e\), then
\(f \circ g\) is polynomial of degree \(\leq ed\).

\end{corollary}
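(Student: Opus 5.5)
The plan is to apply the Boolean covering formula of \ref{thm:dfdb} with $k = ed+1$ directions and show that every summand vanishes. Fix $x \in X$ and $u_1, \dots, u_{ed+1} \in X$, and write $g_T = \Delta(g; x; u_T)$. By \ref{thm:dfdb},
\[
  \Delta(f \circ g; x; u_1, \dots, u_{ed+1}) = \sum_{H \in \Cov(ed+1)} \Delta(f; g(x); (g_T)_{T \in H}).
\]
It therefore suffices to show that each covering summand is zero. The hypothesis $\Delta^{d+1} g \equiv 0$ is read as vanishing of $\Delta(g; x; w_1, \dots, w_{d+1})$ for all basepoints and all direction tuples. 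The first step is to note that this vanishing propagates to all orders above $d+1$: if $|T| \geq d+1$, write $\Delta(g; x; u_T)$ as an alternating sum of $(d+1)$-fold differences at shifted basepoints. Concretely, $\Delta(g; x; u_T) = \sum_{R \subseteq T'} (-1)^{|T'|-|R|} \Delta(g; x + \sum_{i \in R} u_i; u_{T \setminus T'})$ for any $T' \subseteq T$; this is the splitting of the defining Boolean sum over $T = T' \sqcup (T \setminus T')$. Hence $g_T = 0$ whenever $|T| > d$, and likewise $\Delta(f; y; w_1, \dots, w_r) = 0$ whenever $r > e$.

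Next, take a covering $H \in \Cov(ed+1)$. Two cases arise.

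\emph{Some block is large.} If some $T \in H$ has $|T| \geq d+1$, then $g_T = 0$, so one of the directions of the outer difference $\Delta(f; y; (g_T)_{T \in H})$ is zero. A forward difference with a zero direction vanishes, because the terms indexed by $S$ and $S \cup \{i\}$ cancel pairwise. So the summand vanishes.

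\emph{All blocks are small.} Otherwise every block has $|T| \leq d$. Since $H$ covers $[ed+1]$, we get $|H| \cdot d \geq \sum_{T \in H} |T| \geq ed+1$. This forces $|H| \geq e+1$, so the outer difference has more than $e$ directions and vanishes by the first step applied to $f$.

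Since every summand is zero, $\Delta(f \circ g; x; u_\bullet) = 0$ for all choices of $x$ and $u_\bullet$, which is exactly the statement that $f \circ g$ has degree $\leq ed$. (If $d = 0$ then $g$ is constant, and the same argument applies: every block is large, so every summand vanishes.)

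The main point to check is the first step. One must either confirm that the definition of polynomial degree already means vanishing for all $(d+1)$-tuples of directions, so that higher orders follow, or prove the propagation lemma carefully. The shifted-basepoint splitting above is the planned tool for this, and it is elementary. Everything else is the counting estimate $\sum_{T \in H} |T| \geq ed+1$ coming from the covering property, the same inequality behind the weight bound in \ref{lem:weight-bound}.
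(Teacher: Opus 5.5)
Your proposal is correct and follows the same argument as the paper. Apply the covering formula \ref{thm:dfdb} with $ed+1$ directions. A summand survives only if it has at most $e$ blocks, each of size at most $d$, and then the covering property gives $ed+1 \leq \sum_{T \in H} |T| \leq ed$, a contradiction; so every summand vanishes. You also prove two steps the paper leaves tacit, and both are sound: that vanishing of all $(d+1)$-fold differences implies vanishing of all higher ones (your shifted-basepoint splitting), and that a forward difference with a zero direction vanishes.
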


\begin{proof}

In the covering formula \ref{thm:dfdb}, the \(H\)-term involves
\(\Delta^{|H|}f\) (vanishing for \(|H| > e\)) applied to directions
\(\Delta^{|T|}g\) (vanishing for \(|T| > d\)). A covering with
\(|H| \leq e\) blocks of size \(|T| \leq d\) satisfies
\(k \leq \sum_{T \in H} |T| \leq ed\).

\end{proof}

\subsection{Newton--Mahler composition}\label{newtonmahler-composition}

For \(g: \IN_0 \to \IZ\) with Newton coefficients
\(b_j = \Delta^j g(0)\) and any \(f: \IZ \to Y\), the binomial covering
formula \ref{thm:iterated-dfdb} with \(\gamma = n\) and unit increments
gives the Newton coefficients of \(f \circ g\) in terms of the Newton
coefficients of \(g\) and finite differences of \(f\) at \(g(0)\):

\[
  \Delta^n(f \circ g)(0) = \sum_{\kappa} \Cov_2(n, \kappa)\, \Delta(f;\, g(0);\, (b_j)_j^{\times \kappa(j)}),
\]

where the sum runs over all \(\kappa\) with \(\Cov_2(n, \kappa) \neq 0\)
(finitely many). Since Mahler's theorem identifies continuous maps
\(\IZ_p \to \IZ_p\) with convergent Newton series (\(b_n \to 0\)
\(p\)-adically), the same formula computes Mahler coefficients of
composites of continuous \(p\)-adic maps.

\subsection{Discrete jet composition}\label{discrete-jet-composition}

The fixed-basepoint property makes discrete jets composable without
re-evaluation. A \emph{discrete \(k\)-jet} of \(g\) at \(x\) in
directions \(u_\bullet\) is the table
\((\Delta(g; x; u_T))_{T \subseteq [k]}\) of \(2^k\) forward
differences. Given this table, the discrete jet of \(f \circ g\) is
computed in three steps:

\begin{enumerate}
\def\labelenumi{\arabic{enumi}.}
\tightlist
\item
  \emph{Zeta (sum up):} recover the \(2^k\) vertex values
  \(g(x + \sum_{i \in S} u_i)\) from the differences by discrete Taylor
  \ref{prop:taylor-duality}.
\item
  \emph{Evaluate:} apply \(f\) pointwise to get
  \((f \circ g)(x + \sum_{i \in S} u_i)\) for all \(S \subseteq [k]\).
\item
  \emph{Möbius (alternate):} extract the differences
  \(\Delta(f \circ g; x; u_S)\) by the alternating sieve.
\end{enumerate}

Steps 1 and 3 are the fast zeta and Möbius transforms on
\(\set{0,1}^k\), each costing \(O(k \cdot 2^k)\) operations. The total
cost is \(O(k \cdot 2^k)\), polynomial in the jet size \(2^k\). The
covering formula is what this algorithm produces when expanded
algebraically; the zeta/Möbius factorization is the efficient
implementation.

\subsection{Product rules}\label{product-rules}

\begin{theorem}[Discrete product rule]

\label{thm:product-rule} Let \(X\) be an abelian group, \(A\) an
associative algebra, and \(f_1,\dots,f_r: X \to A\). For \(x \in X\) and
\(u_1,\dots,u_n \in X\):

\begin{enumerate}
\def\labelenumi{\arabic{enumi})}
\tightlist
\item
  \emph{Boolean product rule:}
\end{enumerate}

\[
  \Delta(f_1 \cdots f_r; x; u_1,\dots,u_n)
  =
  \sum_{\substack{J_1,\dots,J_r \subseteq [n] \\\\ J_1 \cup \cdots \cup J_r = [n]}}
  \Delta(f_1; x; u_{J_1}) \cdots \Delta(f_r; x; u_{J_r}).
\]

\begin{enumerate}
\def\labelenumi{\arabic{enumi})}
\setcounter{enumi}{1}
\tightlist
\item
  \emph{Boolean Taylor product rule:}
\end{enumerate}

\[
  (f_1 \cdots f_r)(x + {\textstyle\sum_{i=1}^n} u_i)
  =
  \sum_{J_1, \dots, J_r \subseteq [n]}
  \Delta(f_1; x; u_{J_1}) \cdots \Delta(f_r; x; u_{J_r}).
\]

The sum in 1) runs over all \emph{ordered coverings} \((J_1,\dots,J_r)\)
of \([n]\): the \(J_a\) may overlap, and empty \(J_a\) are allowed. Part
2) is the product of Taylor expansions. The binomial versions follow by
applying 1,2) to \(S(\gamma)\).

\end{theorem}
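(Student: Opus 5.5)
We follow the pattern of \ref{thm:dfdb}: first prove the Taylor side 2), then obtain 1) by Boolean Möbius inversion \ref{prop:moebius-inversion}. Throughout, write \(F = f_1 \cdots f_r\) and \(\phi_a(J) := \Delta(f_a; x; u_J)\) for \(J \subseteq [n]\), with the convention \(\phi_a(\emptyset) = f_a(x)\). This convention matches the empty difference \(\Delta(f_a; x) = f_a(x)\) in \ref{def:forward-diff}.

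\textbf{Step 1 (part 2).} Fix \(S \subseteq [n]\). By Boolean Taylor duality \ref{prop:taylor-duality}, including the \(S = \emptyset\) term, we have \(f_a(x + \sum_{i \in S} u_i) = \sum_{J \subseteq S} \phi_a(J)\) for each \(a\). Since multiplication in \(A\) is pointwise, \(F(x + \sum_{i\in S} u_i)\) is the product of these \(r\) sums. Distributivity then gives
\[
  F(x + {\textstyle\sum_{i \in S}} u_i) = \sum_{J_1, \dots, J_r \subseteq S} \phi_1(J_1) \cdots \phi_r(J_r),
\]
with the factors kept in order because \(A\) need not be commutative. Taking \(S = [n]\) yields 2).

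\textbf{Step 2 (part 1).} Define \(\varphi(R) := \sum_{J_1 \cup \cdots \cup J_r = R} \phi_1(J_1)\cdots\phi_r(J_r)\) for \(R \subseteq [n]\). Each ordered tuple \((J_1,\dots,J_r)\) of subsets of \(S\) has a unique union \(R = J_1 \cup \cdots \cup J_r \subseteq S\). Hence \(\zeta(\varphi; S) = \sum_{R \subseteq S} \varphi(R)\) equals the right side of Step 1, which is \(F(x + \sum_{i \in S} u_i)\). We apply \(\mu\) to both sides. By \ref{prop:moebius-inversion}, \(\varphi([n]) = \mu(\zeta\varphi; [n])\), and by \ref{def:forward-diff}, \(\mu\) of the vertex values of \(F\) is exactly \(\Delta(F; x; u_1, \dots, u_n)\). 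This proves 1).

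\textbf{Step 3 (binomial versions).} Apply 1) and 2) to the slot set \(S(\gamma)\) with directions \(u \circ \pi\), and group the terms by profiles \(\nu(J_a)\), as in the proof of \ref{thm:iterated-dfdb}. Permutation invariance of \(\Delta\) makes each summand depend only on \((\nu(J_1), \dots, \nu(J_r))\).

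The only point needing care, more bookkeeping than obstacle, is the treatment of empty \(J_a\). The \(a\)-th factor must then be the value \(f_a(x)\) and not a difference. The zeta sum in Step 2 must run over all \(R \subseteq S\), including \(R = \emptyset\), whose term is \(f_1(x)\cdots f_r(x) = F(x)\). This differs from the composition case, where \(z\) was split off. With this convention the inversion is the plain Boolean one with no correction term.
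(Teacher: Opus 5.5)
Your proof is correct and follows essentially the paper's route: you expand each $f_a(x+\sum_{i\in S}u_i)$ by Boolean Taylor duality, multiply out in order, and extract the top difference. The only cosmetic difference is that the paper substitutes into the alternating sum and evaluates the sign sum $(1-1)^{|M|}$, $M=[n]\setminus(J_1\cup\cdots\cup J_r)$, by hand, whereas you package that same computation as Boolean Möbius inversion of $\varphi$, as in the proof of \ref{thm:dfdb}.
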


\begin{proof}

By discrete Taylor \ref{prop:taylor-duality},
\(f_a(x + \sum_{i \in S} u_i) = \sum_{J_a \subseteq S} \Delta(f_a; x; u_{J_a})\).
Substituting into the alternating sum for \(f_1 \cdots f_r\) and
expanding the product gives:

\[
  \Delta(f_1 \cdots f_r; x; u_1, \dots, u_n)
  =
  \sum_{\substack{J_1, \dots, J_r \subseteq [n] \\\\ S \supseteq J_1 \cup \cdots \cup J_r}}
  (-1)^{n-|S|}\,
  \Delta(f_1; x; u_{J_1}) \cdots \Delta(f_r; x; u_{J_r}).
\]

Exchanging the order of summation, a fixed tuple
\((J_1, \dots, J_r) \in \KP([n])^r\) appears in the \(S\)-summand
exactly when \(J_1 \cup \cdots \cup J_r \subseteq S\). Setting
\(M = [n] \setminus (J_1 \cup \cdots \cup J_r)\), the sign sum is
\(\sum_{R \subseteq M} (-1)^{|M|-|R|} = (1-1)^{|M|}\): this is \(1\) if
\(J_1 \cup \cdots \cup J_r = [n]\) and \(0\) otherwise.

\end{proof}

\begin{theorem}[Fréchet product rule]

\label{thm:frechet-product-rule} Let \(X, Y\) be Banach spaces, \(A\) a
Banach algebra, and \(f_1, \dots, f_r: X \to A\) be \(C^n\) near \(x\),
with \(n \geq k\). For \(v_1, \dots, v_k \in X\):

\begin{enumerate}
\def\labelenumi{\arabic{enumi})}
\tightlist
\item
  \emph{Boolean product rule:}
\end{enumerate}

\[
  D(f_1 \cdots f_r; x; v_{[k]})
  =
  \sum_{\substack{J_1 \sqcup \cdots \sqcup J_r = [k]}}
  D(f_1; x; v_{J_1}) \cdots D(f_r; x; v_{J_r}).
\]

\begin{enumerate}
\def\labelenumi{\arabic{enumi})}
\setcounter{enumi}{1}
\tightlist
\item
  \emph{Boolean Taylor product rule:} the square-free part of the
  order-\(n\) Taylor polynomial of
  \(t \mapsto (f_1 \cdots f_r)(x + \sum_{i=1}^k t_i v_i)\) is
\end{enumerate}

\[
  \sum_{J_1 \sqcup \cdots \sqcup J_r \subseteq [k]}
  D(f_1; x; v_{J_1}) \cdots D(f_r; x; v_{J_r})\, t^{J_1 \sqcup \cdots \sqcup J_r}.
\]

\begin{enumerate}
\def\labelenumi{\arabic{enumi})}
\setcounter{enumi}{2}
\tightlist
\item
  \emph{Binomial product rule:} For \(\gamma \in \IN_0^k\) with
  \(|\gamma| \leq n\):
\end{enumerate}

\[
  D(f_1 \cdots f_r; x; v_\bullet^{\times \gamma})
  =
  \sum_{\substack{\alpha_1 + \cdots + \alpha_r = \gamma}}
  \frac{\gamma!}{\alpha_1! \cdots \alpha_r!}\,
  D(f_1; x; v_\bullet^{\times \alpha_1}) \cdots D(f_r; x; v_\bullet^{\times \alpha_r}).
\]

\begin{enumerate}
\def\labelenumi{\arabic{enumi})}
\setcounter{enumi}{3}
\tightlist
\item
  \emph{Binomial Taylor product rule:}
\end{enumerate}

\[
  (f_1 \cdots f_r)(x + {\textstyle\sum_i} t_i v_i)
  =
  \sum_{\substack{\alpha_1, \dots, \alpha_r \geq 0 \\\\ |\alpha_1 + \cdots + \alpha_r| \leq n}}
  \frac{t^{\alpha_1 + \cdots + \alpha_r}}{\alpha_1! \cdots \alpha_r!}\,
  D(f_1; x; v_\bullet^{\times \alpha_1}) \cdots D(f_r; x; v_\bullet^{\times \alpha_r}) + o(|t|^n).
\]

The sum in 1) runs over all \emph{ordered partitions}
\((J_1, \dots, J_r)\) of \([k]\): the \(J_a\) are pairwise disjoint and
empty \(J_a\) are allowed. This is the discrete product rule with
coverings replaced by partitions.

\end{theorem}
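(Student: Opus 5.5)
The plan is to mirror the proof of the discrete product rule \ref{thm:product-rule}, replacing the Boolean cube algebra \(B_k\) by the Taylor algebra \(A_k\) and the analytic \(C^n\) maps by their Taylor jets. The first step reduces to polynomials. By the Peano form of Taylor's theorem, write \(f_a(x+h) = P_a(h) + R_a(h)\) with \(P_a = T^n(f_a;x)\) and \(R_a(h) = o(\|h\|^n)\). Since multiplication in the Banach algebra \(A\) is continuous and bilinear, every product term containing a remainder is \(o(\|h\|^n)\). Hence \(T^n(f_1\cdots f_r;x) = \pi_{\le n}(P_1\cdots P_r)\), which is the product analogue of \ref{thm:taylor-composition}. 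Because \(k \le n\) and \(|\gamma|\le n\), every derivative in 1) and 3) is read off from this truncated product. So it suffices to prove all four statements for continuous polynomial maps \(P_a: X\to A\), using the remark that the polynomial calculus extends verbatim to polynomial maps between modules.

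The second step proves part 2 and then part 1 for polynomial maps. Evaluate on the infinitesimal cube in \(A_k\otimes A\) (the \(\eps_i\) are central). By \ref{prop:poly-taylor-duality} we have \(P_a(x+\sum_i\eps_i v_i) = \sum_{J\subseteq[k]} D(f_a;x;v_J)\,\eps^J\). Multiply these \(r\) expansions. The rule \(\eps^{J_1}\cdots\eps^{J_r} = \eps^{J_1\sqcup\cdots\sqcup J_r}\) if the \(J_a\) are pairwise disjoint, and \(0\) otherwise, from Example \ref{ex:cube-algebras}, collapses ordered coverings to ordered partitions. The result is the \(A_k\)-expansion \(\sum_{J_1\sqcup\cdots\sqcup J_r\subseteq[k]}\prod_a D(f_a;x;v_{J_a})\,\eps^{\sqcup J_a}\).

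Comparing the \(\eps^{[k]}\)-coefficient with \(D(f_1\cdots f_r;x;v_{[k]})\), which is by definition the \(\eps^{[k]}\)-coefficient of \((f_1\cdots f_r)(x+\sum\eps_iv_i)\), gives part 1. The ordered product \(\prod_a\) must be kept in order since \(A\) need not be commutative. For part 2, identify the square-free part of the Taylor polynomial in \(t\) with the image in \(A_k\) under \(t_i\mapsto\eps_i\). This is legitimate because the quotient \(\ik[t]\to A_k\) kills exactly the non-square-free monomials.

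The third step handles parts 3 and 4. Apply part 1 to the slot set \(S(\gamma)\), with \(v_i\) repeated \(\gamma_i\) times. Ordered partitions \((J_1,\dots,J_r)\) of \(S(\gamma)\) with profiles \(\nu(J_a)=\alpha_a\) number \(\gamma!/(\alpha_1!\cdots\alpha_r!)\), since each coordinate \(i\) independently distributes \(\gamma_i\) slots. By permutation invariance of \(D\), each such term equals \(\prod_a D(f_a;x;v_\bullet^{\times\alpha_a})\), which gives 3). Alternatively, work directly in \(A_k^\nu\) using \ref{prop:binomial-taylor-duality}, where \(\eps^{\alpha}\eps^{\beta}=\eps^{\alpha+\beta}\) and the factors \(1/\alpha_a!\) appear; this requires \(\IQ\subseteq\ik\), which holds over \(\IR\). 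Part 4 then follows by inserting 3) into the multivariate Taylor formula \ref{cor:multivariate-taylor} for the product, together with \(\|h\|\le C|t|\).

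The main obstacle is the analytic reduction in the first step, namely the product version of \ref{thm:taylor-composition}. One must check that the \(o(\|h\|^n)\) bound survives multiplication by the bounded polynomial factors, and that \(D^\alpha\) of the \(C^n\) product equals \(D^\alpha\) of its jet. This needs only the Banach-algebra norm inequality \(\|ab\|\le\|a\|\|b\|\). Everything after that is the algebraic computation above.
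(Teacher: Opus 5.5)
Your proposal is correct and takes essentially the same route as the paper. You reduce to Taylor jets via the product analogue of Taylor composition, then multiply the expansions in \(A_k\), where \(\eps^{J_1}\cdots\eps^{J_r}=0\) unless the \(J_a\) are disjoint, so ordered coverings become ordered partitions; your Peano-remainder argument for the product jet, your handling of noncommutativity via central \(\eps_i\), and your slot-set count \(\gamma!/(\alpha_1!\cdots\alpha_r!)\) for parts 3) and 4) spell out steps the paper's proof leaves implicit.
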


\begin{proof}

By Taylor composition \ref{thm:taylor-composition}, the coefficients of
the product \(f_1 \cdots f_r\) up to order \(n\) agree with those of the
product of Taylor polynomials. For polynomials, the result follows from
the discrete product rule applied in the Taylor algebra \(A_k\): since
\(\eps^{J_1} \cdots \eps^{J_r} = 0\) unless the \(J_a\) are pairwise
disjoint, the ordered coverings of the discrete rule reduce to ordered
partitions.

\end{proof}

\section{Acknowledgements}\label{acknowledgements}

We thank Duarte and Torres for their wonderfully dense paper
\citep{DT2008}, which inspired this work, and for sharing the source
code accompanying their formula.

\bibliographystyle{alpha-local}
\bibliography{refs}

\end{document}